
\documentclass[11pt]{article}

\usepackage[default]{jasa_harvard}    

\usepackage{endfloat}

\usepackage{epsfig}
\usepackage{graphicx}
\usepackage{amsbsy}
\usepackage{amsfonts}
\usepackage{amssymb}
\usepackage{amsmath, amsthm}
\usepackage{caption}
\usepackage{setspace}
\usepackage{bm}
\usepackage{float}
\usepackage[usenames,dvipsnames]{color}
\usepackage{multirow}
\usepackage[T1]{fontenc}
\usepackage{lmodern}
\usepackage[figuresright]{rotating}
\usepackage{enumerate}
\usepackage{url}
\usepackage{bm}
\usepackage{xr}


\newtheorem{lemma}{Lemma}
\newtheorem{cor}{Corollary}
\newtheorem{prop}{Proposition}
\newtheorem{defn}{Definition}
\newtheorem{example}{Example}



\newcommand{\real}{\mathbb{R}}    

\newcommand{\hbtheta}{\hat{\bm{\theta}}}
\newcommand{\btheta}{\bm{\theta}}
\newcommand{\tbtheta}{\tilde{\bm{\theta}}}
\newcommand{\htheta}{\hat{\theta}}
\newcommand{\ttheta}{\tilde{\theta}}
\newcommand{\tphi}{\tilde{\phi}}


\begin{document}

\title{ \textbf{ Non-Local Priors for High-Dimensional Estimation}}

\author{David Rossell$^{1}$,  Donatello Telesca$^{2}$\\
\\
\small {\bf Author's Footnote}\\
\small $^1$ University of Warwick, Department of Statistics\\
\small $^2$ UCLA, Department of Biostatistics
}

\date{}

\maketitle

\newpage

\begin{center}
\textbf{Abstract}
\end{center}
Simultaneously achieving parsimony and good predictive power in high dimensions is a main challenge in statistics.
Non-local priors (NLPs) possess appealing properties for high-dimensional model choice,
but their use for estimation has not been studied in detail.
We show that, for regular models, Bayesian model averaging (BMA) estimates based on NLPs shrink spurious parameters
either at fast polynomial or quasi-exponential rates as the sample size $n$ increases (depending on the chosen prior density).
Non-spurious parameter estimates only differ from the oracle MLE by a factor of $n^{-1}$.
We extend some results to linear models with dimension $p$ growing with $n$.
 Coupled with our theoretical investigations, we outline the constructive representation of 
NLPs as mixtures of truncated distributions.   From a practitioners' perspective, our work 
enables simple posterior sampling and extending NLPs beyond previous proposals.
Our results show notable high-dimensional estimation for linear models with $p>>n$ at reduced computational cost.
NLPs provided lower estimation error than benchmark and hyper-g priors, SCAD and LASSO
in simulations, and in gene expression data achieved higher cross-validated $R^2$ with an order of magnitude less predictors.
Remarkably, these results were obtained without the need to pre-screen predictors.
Our findings contribute to the debate of whether different priors
should be used for estimation and model selection,
showing that selection priors may actually be desirable for high-dimensional estimation.

\vspace*{.3in}

\noindent \textsc{Keywords}: Model Selection, MCMC, Non Local Priors, Bayesian Model Averaging, Shrinkage

\newpage

\section{Introduction}

Developing high-dimensional methods that balance parsimony and good predictive power is a main challenge in statistics.
Non-local prior (NLP) distributions have appealing
properties for Bayesian model selection. Relative to local priors (LPs), 
NLPs discard spurious covariates faster as the sample size $n$ grows, while preserving exponential learning rates to detect non-zero coefficients \cite{johnson:2010}.
As shown below, when combined with Bayesian model averaging (BMA), this extra shrinkage has important consequences for parameter estimation.
Denote the observations by  ${\bf y}_n \in \mathcal{Y}_n$, where $\mathcal{Y}_n$  is the sample space.
We entertain a collection of models $M_k$ for $k=1,\ldots,K$ with
Radon-Nikodym densities $f_k({\bf y}_n \mid \bm{\theta}_k,\phi_k)$, 
where  $\bm{\theta}_k \in \Theta_k \subseteq \Theta$  are parameters of interest
and $\phi_k \in \Phi$ is a fixed-dimension nuisance parameter.
We assume that models are nested in $M_K$ ($\Theta_K=\Theta$),
$\Theta_k \bigcap \Theta_{k'}$ has 0 Lebesgue measure for $k \neq k'$, 
denote $|k|=\mbox{dim}(\Theta_k \times \Phi)$
 and $(\bm{\theta},\phi)=(\bm{\theta}_K,\phi_K) \in \Theta \times \Phi$.
A prior density $\pi(\bm{\theta}_k \mid M_k)$ for  $\bm{\theta}_k \in \Theta_k$ under $M_k$
is a NLP if it converges to 0 as $\bm{\theta}_k$ approaches any value $\bm{\theta}_0$ consistent with a sub-model $M_{k'}$.

\begin{defn}
Let $\bm{\theta}_k \in \Theta_k$, 
an absolutely continuous $\pi(\bm{\theta}_k \mid M_k)$ is a non-local prior if 
$\mathop {\lim }\limits_{\bm{\theta}_k \to \bm{\theta}_0} \pi(\bm{\theta}_k \mid M_k)=0$
for any $\bm{\theta}_0 \in \Theta_{k'} \subset \Theta_k$, $k' \neq k$.
\end{defn}

For preciseness, we assume that  
any $\Theta_k \bigcap \Theta_{k'} \subseteq \Theta_{k''}$  for some $|k''|<\mbox{min}\{|k|,|k'|\}$.
To fix ideas, we consider variable selection
where $E({\bf y}_n)=g(X_n \bm{\theta})$ for a given function $g(\cdot)$ and predictors $X_n$
and $\Theta_k \subset \Theta_K$ by setting elements in $\bm{\theta}$ to 0.
We entertain the following NLP densities 
\begin{align}
\pi_M(\bm{\theta} \mid \phi_k, M_k) = \prod_{i \in M_k}^{} \frac{\theta_i^2}{\tau
    \phi_k} N(\theta_i; 0, \tau \phi_k) 
\label{eq:pmom} \\
\pi_I(\bm{\theta} \mid \phi_k, M_k) = \prod_{i \in M_k}^{} \frac{(\tau \phi_k)^{\frac{1}{2}}}{\sqrt{\pi} \theta_i^2} 
\mbox{exp}\left\{ - \frac{\tau \phi_k}{\theta_i^2} \right\} 
\label{eq:pimom} \\
\pi_E(\bm{\theta} \mid \phi_k, M_k) = \prod_{i \in M_k}^{} \exp \left\{
    \sqrt{2} - \frac{\tau \phi_k}{\theta_i^2} \right\} N(\theta_i; 0, \tau \phi_k),
\label{eq:pemom}
\end{align}
where $i \in M_k$ are the non-zero coefficients,
$N(\theta_i; 0, v)$ is the univariate Normal density with mean $0$ and variance $v$
and $\pi_M$, $\pi_I$ and $\pi_E$ are called the product MOM, iMOM and eMOM priors (pMOM, piMOM and peMOM, respectively).
Consider the usual BMA estimate
\begin{align}
E(\bm{\theta} \mid {\bf y}_n)= \sum_{k=1}^{K} E(\bm{\theta} \mid M_k, {\bf y}_n) P(M_k \mid {\bf y}_n)
\label{eq:bma}
\end{align}
where $P(M_k \mid {\bf y}_n) \propto m_k({\bf y}_n) P(M_k)$ and
$m_k({\bf y}_n)$ is the integrated likelihood under $M_k$.
BMA shrinks estimates by assigning small $P(M_k \mid {\bf y}_n)$ to unnecessarily complex models.
Let $M_t$ be the smallest model such that $f_t({\bf y}_n \mid \bm{\theta}_t)$ minimizes Kullback-Leibler (KL) divergence
to the data-generating density for some $\bm{\theta}_t \in \Theta_t$.
Under regular models with fixed $P(M_k)$ and $\mbox{dim}(\Theta)$, if $\pi(\bm{\theta}_k \mid M_k)$ is a LP
and $M_t \not \subset M_k$ then $P(M_k \mid {\bf y}_n)=O_p(e^{-n})$ and
if $M_t \subset M_k$ then $P(M_k \mid {\bf y}_n)=O_p(n^{-\frac{1}{2}(|k|-|t|)})$ \cite{dawid:1999}.
Models containing spurious parameters are hence regularized at a slow polynomial rate,
which implies that if truly $\theta_i=0$ then $E(\theta_i \mid {\bf y}_n)= O(n^{-1})p_t$ (Section \ref{sec:asymptotics}),
where $p_t$ depends on ratios of model prior probabilities.
We shall show that any LP can be transformed into a NLP to achieve either $E(\theta_i \mid {\bf y}_n)= O_p(n^{-2})p_t$ (pMOM)
or $E(\theta_i \mid {\bf y}_n)= O_p(e^{-\sqrt{n}})p_t$ (peMOM, piMOM).
A complementary strategy is to penalize complex models via $p_t$. 
For instance, \citeasnoun{castillo:2012} and \citeasnoun{castillo:2014} consider multiple Normal means and
linear regression in a fully Bayesian framework where variable inclusion probabilities decrease with $|K|$,
and show that certain $\pi(\bm{\theta}_k \mid {\bf y}_n)$ induce shrinkage at optimal asymptotic minimax rates, 
in particular finding that light-tailed priors ({\it e.g.} Normal) are sub-optimal.
\citeasnoun{martin:2013} propose a related empirical Bayes strategy. 
Yet another option is to consider the single model $M_K$ and specify absolutely continuous shrinkage priors,
which can also achieve good posterior concentration \cite{bhattacharya:2012}.
For a related review on penalized-likelihood strategies see \citeasnoun{fan:2010}.

In contrast, our strategy is based upon faster $m_k({\bf y}_n)$ rates and (optionally) sparsity-inducing $P(M_k)$.
To illustrate the key role of NLPs, in Normal regression models with $|K|=O(n^{\alpha})$, $0.5 \leq \alpha < 1$ and bounded $P(M_k)/P(M_t)$ 
then $P(M_t \mid {\bf y}_n) \stackrel{P}\longrightarrow 1$ when using NLPs and to 0 when using LPs \cite{johnson:2012}.
We note that when sparse $P(M_k)$ are used
consistency may still be achieved with LPs,
{\it e.g.} \citeasnoun{liang:2013} or \citeasnoun{narisetty:2014}
prove consistency in linear regression with prior inclusion probabilities $O(|K|^{-\gamma})$ for $\gamma>0$.
While both $m_k({\bf y}_n)$ and $P(M_k)$ induce sparsity,
we note that the latter is guided by  strong  a priori assumptions.

The main contribution of this manuscript is two-fold.
First, we characterize complexity penalties and implied asymptotic rates for BMA estimates induced by NLPs (Section \ref{sec:asymptotics}).
Second, we provide a one-to-one representation of NLPs as mixtures of truncated distributions (Section \ref{sec:theory})
that addresses important practical issues.
It provides an intuitive justification for NLPs, adds flexibility in prior choice and facilitates posterior sampling
under strong multi-modalities (Section \ref{sec:postsampling}). 
Finally, we study finite-sample performance in simulations and gene expression data (Section \ref{sec:examples}).

\section{Non local priors for estimation} 
\label{sec:asymptotics}

\begin{figure}
\begin{center}
\begin{tabular}{c}
\includegraphics[width=0.7\textwidth,height=0.4\textwidth]{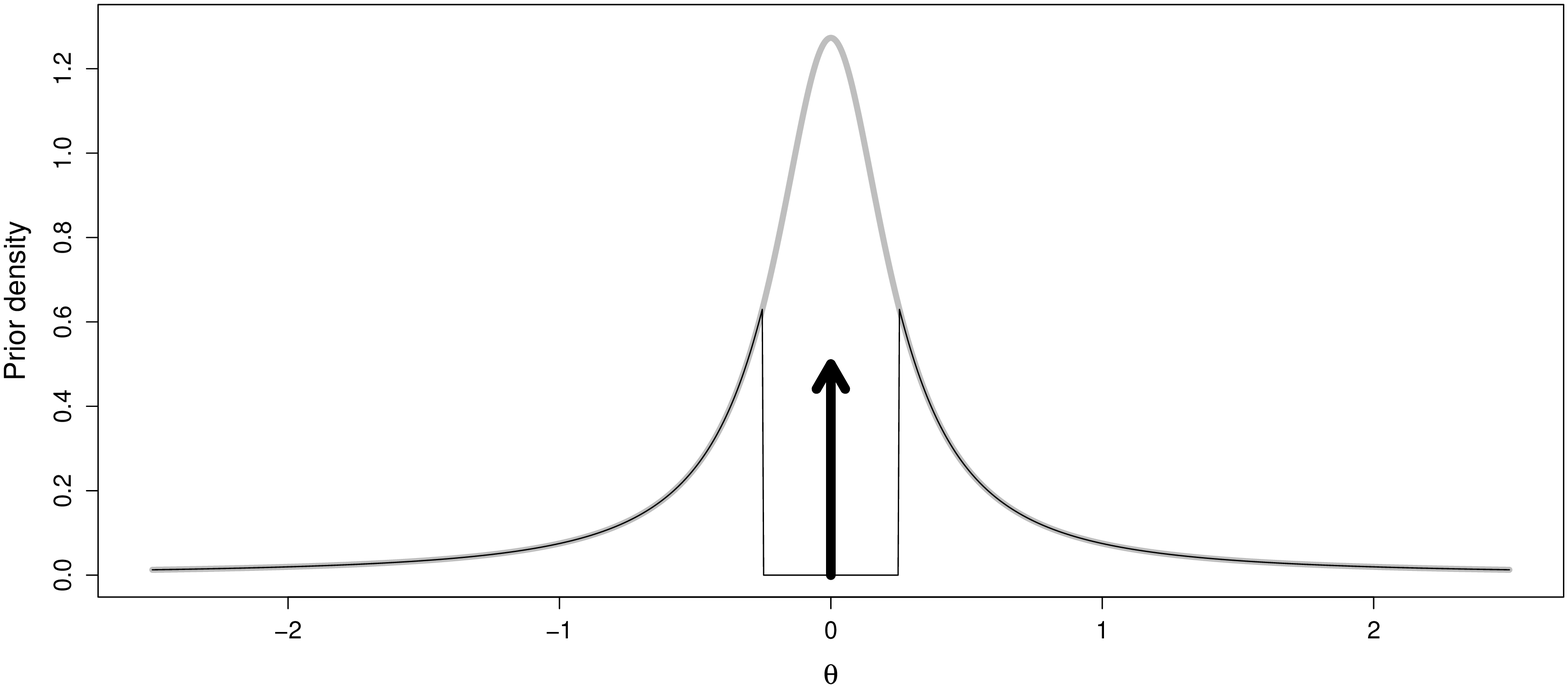} \\
\includegraphics[width=0.7\textwidth,height=0.4\textwidth]{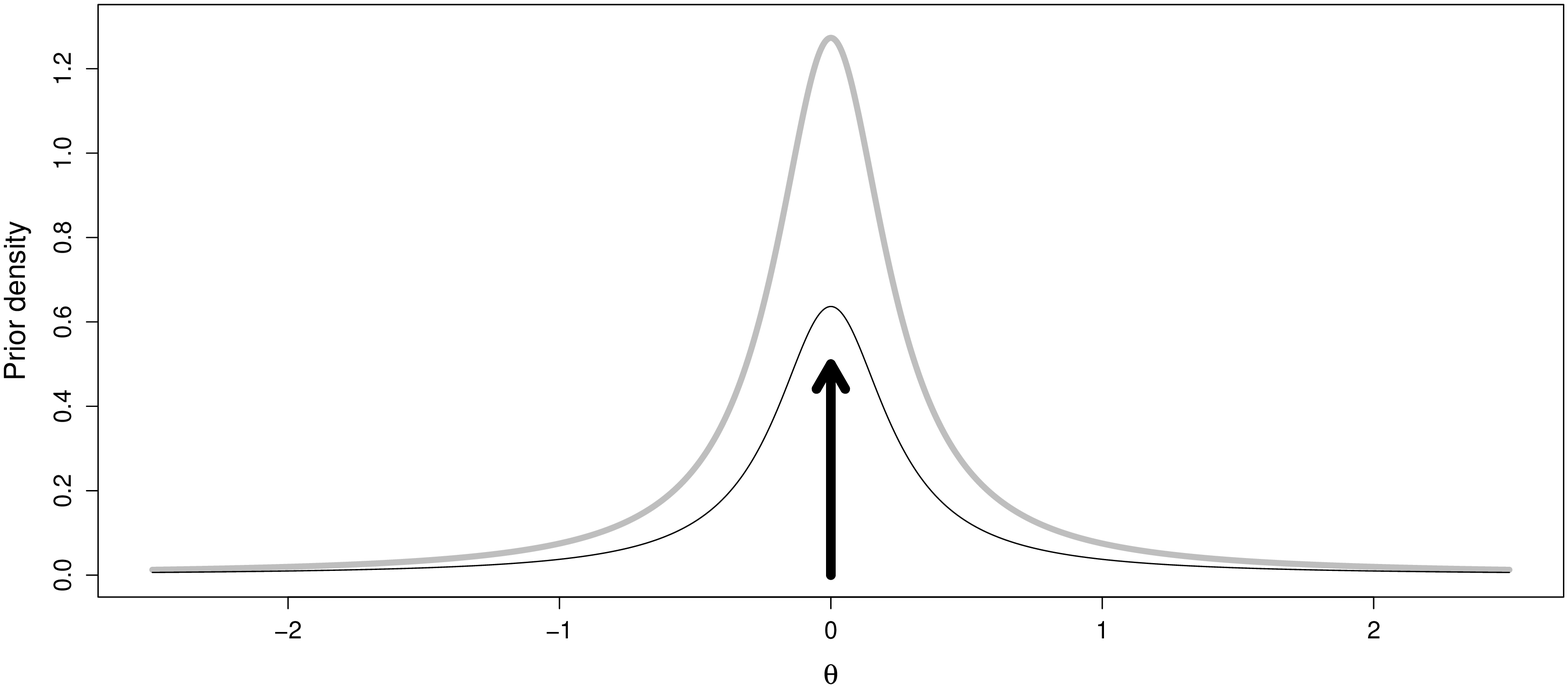} \\
\includegraphics[width=0.7\textwidth,height=0.4\textwidth]{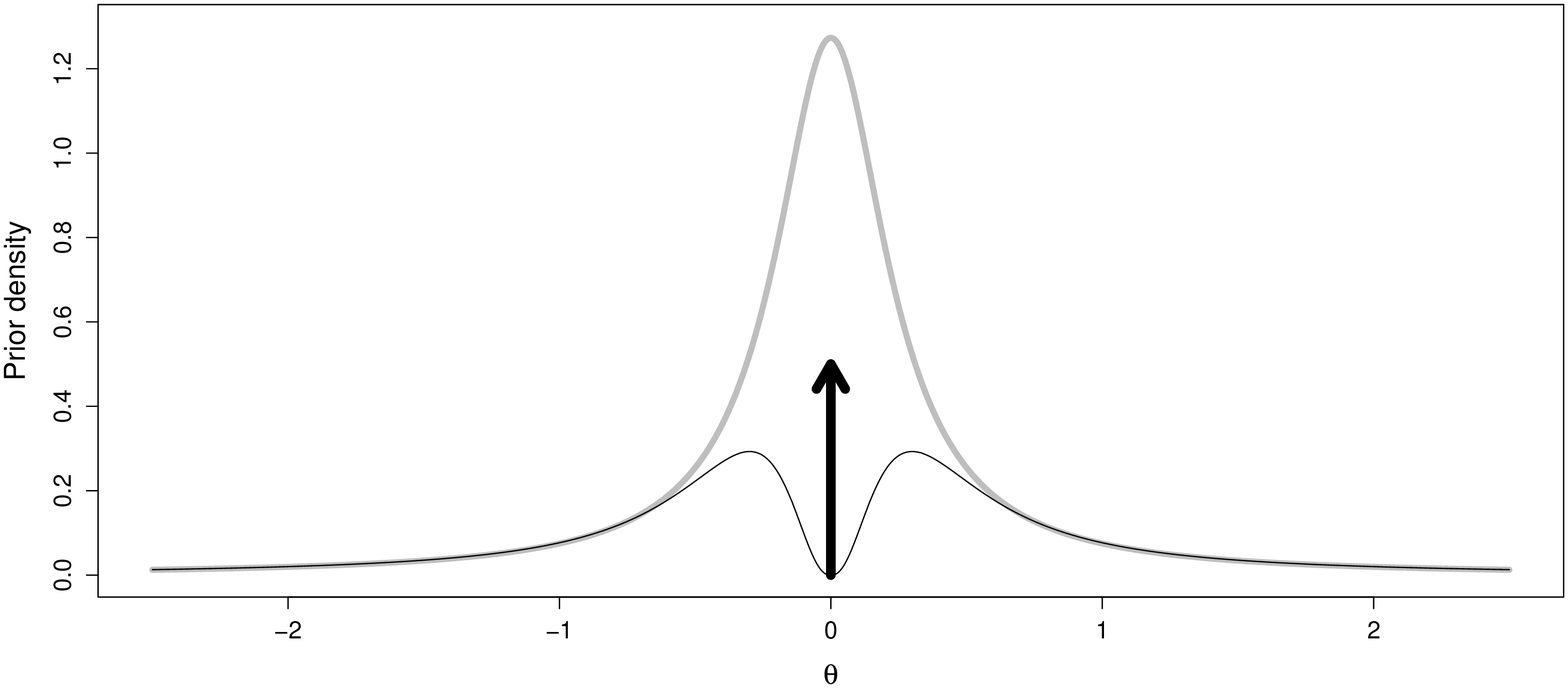} \\
\end{tabular}
\end{center}
\caption[Marginal priors for $\theta \in \real$]{Marginal priors for $\theta \in \real$
(estimation prior $\mbox{Cauchy}(0,0.0625)$ shown in grey).
Top: mixture of point mass at 0 and $\mbox{Cauchy}(0,0.0625)$ truncated at $\lambda=0.25$;
Middle: same with un-truncated $\mbox{Cauchy}(0,0.0625)$;
Bottom: same as top with $\lambda \sim \mbox{IG}(3,10)$}
\label{fig:estvstest}
\end{figure}

We provide intuition with a simple example.
Suppose we wish to both estimate $\theta \in \real$ and test  $H_0: \theta = 0$ {\it vs.} $H_1: \theta \neq 0$, 
and that we are comfortable with a (possibly vague) prior for the estimation problem. 
Figure \ref{fig:estvstest} (grey line) shows a  $\mbox{Cauchy}(0,0.25)$  prior expressing confidence that $\theta$ is close to 0, {\it e.g.}  $P(|\theta| > 0.25)=0.5$. 
A testing prior assigns positive $P(\theta=0)$, but to be consistent we aim to preserve the estimation prior as much as possible. 
We set a practical significance threshold $\lambda=0.25$ and combine a point mass at 0 with a  $\mbox{Cauchy}(0,0.25)$ 
truncated to exclude $(-0.25,0.25)$, with $P(H_0)=P(H_1)=0.5$ (Figure \ref{fig:estvstest}(top), black line).
This assigns the same $P(|\theta| > \theta_0)$ as before for $\theta_0 \geq 0.25$ and concentrates all probability in $(-0.25,0.25)$ at $\theta=0$.
Truncated priors have been discussed before \cite{verdinelli:1996,rousseau:2007,klugkist:2007}. 
They encourage coherence between estimation and testing, but they cannot detect small but non-zero coefficients.
Instead, most Bayesian tests use non-truncated priors
\cite{jeffreys:1961,zellner:1984,kass:1995,ohagan:1995,moreno:1998,perez:2002,bayarri:2007,liang:2008}.
Figure \ref{fig:estvstest} (middle) combines a  $\mbox{Cauchy}(0,0.25)$ 
with a point mass at 0. It is much more concentrated around 0, {\it e.g.}  $P(|\theta| > 0.25)$ decreased from 0.5 to 0.25. 
We view this discrepancy between estimation and testing priors
as troublesome, as their underlying beliefs cannot be easily reconciled.
Suppose that we go back to the truncated Cauchy and set
$\lambda \sim G(2.5,10)$ ($E(\lambda)=0.25$) to express our uncertainty about $\lambda$.
Figure \ref{fig:estvstest} (bottom) shows the marginal prior on $\theta$ after integrating out $\lambda$.
It is a smooth version of the truncated Cauchy that goes to 0 as $\theta \rightarrow 0$,
showing an example where a NLP arises as a mixture of truncated distributions (see Section \ref{sec:theory}).
Relative to the estimation prior,
most of the probability assigned to $\theta \approx 0$ is absorbed by the point mass,
and $P(|\theta| > \theta_0)$ is roughly preserved for $\theta_0 > 0.5$.
An advantage is that testing and estimation are conducted under the same framework.
Also, as we now show $E(\theta \mid {\bf y}_n)=E(\theta \mid H_1, {\bf y}_n)P({\bf y}_n \mid H_1)$ induces strong shrinkage.

We note that any NLP can be written as
$\pi(\bm{\theta}_k,\phi_k \mid M_k) \propto d(\bm{\theta}_k,\phi_k) \pi^L(\bm{\theta}_k,\phi_k \mid M_k)$,
where $d_k(\bm{\theta}_k,\phi_k) \rightarrow 0$ 
as $\bm{\theta}_k \rightarrow \bm{\theta}_0$ for any $\bm{\theta}_0 \in \Theta_{k'} \subset \Theta_k$
and $\pi^L(\bm{\theta}_k,\phi_k)$ is a LP.
To ensure that $\pi(\bm{\theta}_k,\phi_k \mid M_k)$ is proper
we assume $\int_{}^{}\!\, d_k(\bm{\theta}_k,\phi_k) \pi^L(\bm{\theta}_k\mid \phi_k,M_k) d \bm{\theta}_k < \infty$.
NLPs are often expressed in this form ((\ref{eq:pmom}) or (\ref{eq:pemom})),
but the representation is always possible since
$\pi(\bm{\theta}_k,\phi_k\mid M_k)=
\frac{\pi(\bm{\theta}_k,\phi_k\mid M_k)}{\pi^L(\bm{\theta}_k,\phi_k\mid M_k)}
\pi^L(\bm{\theta}_k,\phi_k \mid M_k)= d_k(\bm{\theta}_k,\phi_k) \pi^L(\bm{\theta}_k,\phi_k\mid M_k)$.
Throughout we assume that $\pi(\phi_k \mid M_k)$ is bounded for all $\phi_k$.
The following result guarantees that $P(M_k \mid {\bf y}_n)$ under NLPs
induce an additional penalization for overly complex models.
All proofs are provided in the Appendix.

\begin{prop}
Let $m_k({\bf y}_n),m_k^L({\bf y}_n)$ be the integrated likelihoods for a NLP and the corresponding LP
under model $M_k$ for $k=1,\ldots,K$, as above. For $k=1,\ldots,K$, 
\begin{enumerate}[(i)]
\item Let $g_k({\bf y}_n)= \int_{}^{}\!\, \int_{}^{}\!\, d_k(\bm{\theta}_k,\phi_k) \pi^L(\bm{\theta}_k,\phi_k \mid {\bf y}_n) d \bm{\theta}_k d\phi_k$ be
the mean of $d_k(\bm{\theta}_k,\phi_k)$ under the LP posterior. Then
$m_k({\bf y}_n)= m_k^L({\bf y}_n) g_k({\bf y}_n)$.

\item
Consider the peMOM or piMOM priors under $M_k$ with fixed $|k|$.
Let $A \subset \Theta_k\times \Phi$ be such that
$f_k({\bf y}_n \mid \bm{\theta}_k^*,\phi_k^*)$ for any $(\bm{\theta}_k^*,\phi_k^*) \in A$
minimizes KL divergence to the data-generating density $f^*({\bf y}_n)$, and assume that 
for any $(\tilde{\bm{\theta}}_k,\tilde{\phi}_k) \not\in A$ as $n \rightarrow \infty$
$$
\frac{f_k({\bf y}_n \mid \bm{\theta}_k^*, \phi_k^*)}{f_k({\bf y}_n \mid \tilde{\bm{\theta}}_k, \tilde{\phi}_k)} \stackrel{a.s.}{\longrightarrow} \infty.
$$

If $A=\{(\bm{\theta}_k^*,\phi_k^*)\}$ is a singleton (identifiable models) then
$g_k({\bf y}_n) \stackrel{P}{\longrightarrow} d_k(\bm{\theta}_k^*,\phi_k^*)$.
For any $A$,
if $f^*({\bf y}_n)=f_t({\bf y}_n \mid \bm{\theta}_t^*,\phi_t^*)$ for some $t \in \{1,\ldots,K\}$ then
$g_k({\bf y}_n) \stackrel{P}{\longrightarrow} 0$ when $M_t \subset M_k$, $k \neq t$
and $g_k({\bf y}_n) \stackrel{P}{\longrightarrow} c>0$ when either $k=t$ or $M_t \not\subset M_k$.
The results also hold if $\pi^L(\bm{\theta}_k,\phi_k)$ is the pMOM prior
and $m_{k,\tau(1+\epsilon)}^L({\bf y}_n)/m_{k,\tau}^L({\bf y}_n) \stackrel{a.s.}{\longrightarrow} c \in (0,\infty)$ as $n \rightarrow \infty$,
where $m_{k,\tau}^L({\bf y}_n)$ is the integrated likelihood under a Normal prior with dispersion $\tau(1+\epsilon)$ and $\epsilon \in (0,1)$.
\end{enumerate}
\label{prop:parsimony_general}
\end{prop}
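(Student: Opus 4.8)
The plan is to dispatch part (i) as an exact integral identity and part (ii) as an asymptotic argument built entirely on top of it. For part (i) I would start from the definition $m_k(\mathbf{y}_n)=\int\!\int f_k(\mathbf{y}_n\mid\bm{\theta}_k,\phi_k)\,\pi(\bm{\theta}_k,\phi_k\mid M_k)\,d\bm{\theta}_k\,d\phi_k$ and substitute the representation $\pi=d_k\pi^L$. Because $d_k$ is defined as the ratio $\pi/\pi^L$, one has $\int\!\int d_k\pi^L\,d\bm{\theta}_k\,d\phi_k=\int\!\int\pi=1$, so no stray normalizing constant appears. Then I would insert the local-prior Bayes identity $f_k\,\pi^L=m_k^L(\mathbf{y}_n)\,\pi^L(\bm{\theta}_k,\phi_k\mid\mathbf{y}_n)$, pull the scalar $m_k^L(\mathbf{y}_n)$ outside the integral, and read off $m_k(\mathbf{y}_n)=m_k^L(\mathbf{y}_n)\,g_k(\mathbf{y}_n)$, with $g_k$ the LP posterior mean of $d_k$. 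The interchange of integration is licensed by the properness assumption $\int\!\int d_k\pi^L<\infty$.

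For part (ii) the point is that (i) reduces everything to tracking where the LP posterior concentrates and evaluating the penalty $d_k$ there. First I would use the assumed a.s. divergence $f_k(\mathbf{y}_n\mid\bm{\theta}_k^*,\phi_k^*)/f_k(\mathbf{y}_n\mid\tilde{\bm{\theta}}_k,\tilde{\phi}_k)\to\infty$: integrating this pointwise divergence against $\pi^L$ and using boundedness of $\pi(\phi_k\mid M_k)$ forces the LP posterior mass on the complement of any fixed neighborhood of the KL-minimizing set $A$ to vanish in probability. In the identifiable case $A=\{(\bm{\theta}_k^*,\phi_k^*)\}$, I would then push the concentrating posterior through the continuous penalty by dominated convergence to obtain $g_k(\mathbf{y}_n)\stackrel{P}{\longrightarrow}d_k(\bm{\theta}_k^*,\phi_k^*)$.

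When the data-generating density is $f_t$, I would locate $A$ relative to the nesting structure. If $M_t\subset M_k$ with $k\neq t$, minimality of $M_t$ together with the nestedness assumption forces the spurious coordinates of the KL-minimizer to the submodel value, i.e. to some $\bm{\theta}_0\in\Theta_t\subset\Theta_k$ where the non-local property gives $d_k(\bm{\theta}_0,\phi)=0$; concentration of the posterior on this part of $A$ then yields $g_k\to0$. If instead $k=t$ or $M_t\not\subset M_k$, the best KL approximation within $M_k$ keeps every included coordinate bounded away from zero, so $d_k$ is bounded below by a positive constant on $A$ and $g_k\to c>0$. For the bounded penalties (peMOM and, in the appropriate decomposition, piMOM) these conclusions follow from the same dominated-convergence step as the singleton case.

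The main obstacle is the pMOM case, where $d_k=\prod_{i}\theta_i^2/(\tau\phi_k)$ is unbounded, so dominated convergence fails outright: the LP posterior tails carry asymptotically negligible mass but are weighted by an arbitrarily large $d_k$, and one must rule out mass escaping into those tails fast enough to inflate $g_k$. To control this I would exploit the algebraic identity expressing $\prod_i\theta_i^2$ times a $N(0,\tau\phi_k)$ density through Normal densities with inflated dispersion, which lets me rewrite the pMOM integrated likelihood in terms of $m_{k,\tau(1+\epsilon)}^{L}(\mathbf{y}_n)$. The assumed convergence $m_{k,\tau(1+\epsilon)}^{L}/m_{k,\tau}^{L}\to c\in(0,\infty)$ then supplies exactly the uniform-integrability-type control needed to bound the tail contribution of $g_k$ and recover the same limits as in the bounded case. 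Verifying this tail domination is the delicate technical step of the whole argument.
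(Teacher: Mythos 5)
Your proposal is correct and follows essentially the same route as the paper: part (i) is the identical multiply-and-divide-by-$m_k^L({\bf y}_n)$ identity, and part (ii) matches the paper's strategy of re-expressing peMOM/piMOM with a bounded penalty against a suitable local kernel, invoking posterior concentration on the KL-minimizing set, applying a dominated-convergence step, and handling the unbounded pMOM penalty via the variance-inflation identity that produces the $m_{k,\tau(1+\epsilon)}^L/m_{k,\tau}^L$ ratio. The paper packages these steps as two auxiliary lemmas (bounded-penalty reformulation and convergence of posterior means of bounded continuous penalties), but the substance is the same.
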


That is, NLPs add a term $g_k({\bf y}_n)$ that converges to 0 for unnecessarily complex models and to a finite constant otherwise.
The conditions in (ii) essentially require MLE consistency 
(see \citeasnoun{redner:1981} for general conditions that include non-identifiable models).
The pMOM condition on $m_{k,\tau(1+\epsilon)}^L({\bf y}_n)/m_{k,\tau}^L({\bf y}_n)$ is equivalent to
the ratio of posterior densities under $\tau$ and $\tau(1+\epsilon)$ at an arbitrary $(\bm{\theta}_k,\phi_k)$ converging to a constant (see proof).
This typically holds, {\it e.g.} under the conditions in \citeasnoun{walker:1969} for asymptotic posterior normality
or the linear models in Proposition \ref{prop:parsimony_lm},
where we allow the number of variables $|K|$ 
to grow arbitrarily fast with $n$, but only models with $O(n^{\alpha})$ variables are considered, ($\alpha < 1$).
Specifically, let
${\bf y}_n \sim N(X_{k,n} \bm{\theta}_k, \phi_k I)$ 
with $\bm{\theta}_k \in \Theta_k$ under $M_k$
and $|k|=O(n^{\alpha})$.
Let $S_{k,n}=X_{k,n}'X_{k,n}+\tau^{-1}I$,
${\bf m}_{k,n}=S_{k,n}^{-1}X_{k,n}' {\bf y}_n$
and $\hat{\bm{\theta}}_{k,n}= (X_{k,n}'X_{k,n})^{-1} X_{k,n}' {\bf y}_n$ be the least squares estimate.

\begin{prop}
Let $\pi(\bm{\theta}_k,\phi_k \mid M_k)$ be a NLP where either
$d_k(\bm{\theta}_k,\phi_k)=\prod_{i \in M_k}^{} \frac{\theta_{ki}^{2r}}{(2r-1)!!(\tau \phi_k)^r}$
or $d_k(\bm{\theta}_k,\phi_k)=\prod_{i \in M_k}^{} d(\theta_{ki},\phi_k)$
with $d_k(\theta_{ki},\phi_k) \leq c$ for all $i$ and some constant $c$.
Assume that there exist fixed $a,b,n_0>0$ such that
$a < \frac{1}{n}l_1(X_{k,n}'X_{k,n}) < \frac{1}{n}l_{k}(X_{k,n}'X_{k,n}) < b$
for all $n>n_0$, where $l_1,l_{k}$
denote the smallest and largest eigenvalues of $X_{k,n}'X_{k,n}$.
Let $(\bm{\theta}_k^*,\phi_k^*)$ minimize KL divergence to the data-generating density
with $\mbox{Var}\left({\bf y}_n-X_{k,n} \bm{\theta}_k^*\right)=\phi_k^*<\infty$.
Further, assume that $\pi(\phi_k \mid M_k)$ is continuous, bounded and $\pi(\phi_k^* \mid M_k)>0$.
Then
$$g_k({\bf y}_n) \stackrel{P}{\longrightarrow} d_k({\bf m}_{k,n}, \phi_k^*) \stackrel{a.s.}{\longrightarrow} d_k(\bm{\theta}_k^*,\phi_k^*).$$

Further, if the true density
$f^*({\bf y}_n \mid X_{K,n})=N({\bf y}_n; X_{t,n} \bm{\theta}_t^*,\phi_t^*)$ for some $t \in \{1,\ldots,K\}$ then
$g_k({\bf y}_n) \stackrel{P}{\longrightarrow} c$ with $c=0$ when $M_t \subset M_k$ and $c>0$ when $M_t \not\subset M_k$.
\label{prop:parsimony_lm}
\end{prop}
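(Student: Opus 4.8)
The plan is to build on Proposition~\ref{prop:parsimony_general}(i), which identifies $g_k({\bf y}_n)$ as the posterior mean of $d_k$ under the local (Normal) prior. For the Gaussian linear model with an $N(0,\tau\phi_k)$ base on each coefficient, the conditional posterior $\pi^L(\bm{\theta}_k \mid \phi_k, {\bf y}_n)$ is $N({\bf m}_{k,n}, \phi_k S_{k,n}^{-1})$, so that
$$g_k({\bf y}_n)=\int_\Phi \left[\int_{\Theta_k} d_k(\bm{\theta}_k,\phi_k)\, N(\bm{\theta}_k; {\bf m}_{k,n}, \phi_k S_{k,n}^{-1})\, d\bm{\theta}_k\right] \pi^L(\phi_k \mid {\bf y}_n)\, d\phi_k.$$
The eigenvalue hypothesis $a<l_1/n<l_k/n<b$ forces every eigenvalue of $S_{k,n}^{-1}$ to be of order $n^{-1}$, so the conditional posterior of $\bm{\theta}_k$ concentrates on a shrinking neighbourhood of ${\bf m}_{k,n}$ at rate $n^{-1/2}$, uniformly across coordinates. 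The strategy is then to (a)~evaluate the inner $\bm{\theta}_k$-integral and show it converges to $d_k({\bf m}_{k,n},\phi_k)$, (b)~integrate out $\phi_k$ using posterior consistency of the residual variance, and (c)~pass from ${\bf m}_{k,n}$ to $\bm{\theta}_k^*$.

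For the inner integral I would split the two cases. When $d_k=\prod_{i\in M_k}\theta_{ki}^{2r}/[(2r-1)!!(\tau\phi_k)^r]$ is polynomial, the integral is available in closed form as a product-moment of the multivariate Normal posterior; expanding each factor $E(\theta_{ki}^{2r})$ around $m_{ki}^{2r}$ produces correction terms governed by the $O(n^{-1})$ posterior (co)variances. When $d_k=\prod_{i\in M_k} d(\theta_{ki},\phi_k)$ with $d\leq c$ bounded, I would instead use concentration together with dominated convergence: the posterior places vanishing mass outside a neighbourhood of ${\bf m}_{k,n}$ on which $d_k$ is uniformly close to $d_k({\bf m}_{k,n},\phi_k)$. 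In both cases the conditional integral converges to $d_k({\bf m}_{k,n},\phi_k)$. Integrating over $\phi_k$ then uses that $\pi^L(\phi_k\mid{\bf y}_n)$ concentrates at $\phi_k^*$ (standard consistency of the error-variance posterior under the stated conditions $\phi_k^*<\infty$ and $\pi(\phi_k^*\mid M_k)>0$) together with continuity and boundedness of $d_k$ in $\phi_k$, yielding the first limit $g_k({\bf y}_n)\stackrel{P}{\longrightarrow} d_k({\bf m}_{k,n},\phi_k^*)$.

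For the second, almost sure, limit I would note that ${\bf m}_{k,n}=S_{k,n}^{-1}X_{k,n}'{\bf y}_n$ and $\hat{\bm{\theta}}_{k,n}=(X_{k,n}'X_{k,n})^{-1}X_{k,n}'{\bf y}_n$ differ only through the $\tau^{-1}I$ ridge term, whose effect is $O(n^{-1})$ under the eigenvalue bounds, so the two share the same coordinate-wise almost sure limit $\bm{\theta}_k^*$ (the least-squares estimate being strongly consistent for the KL-optimal value, with error covariance $\phi_k^*(X_{k,n}'X_{k,n})^{-1}$ of trace $O(n^{\alpha-1})\to 0$). Continuity of $d_k$, together with the product control deferred to the last paragraph, then gives $d_k({\bf m}_{k,n},\phi_k^*)\stackrel{a.s.}{\longrightarrow} d_k(\bm{\theta}_k^*,\phi_k^*)$. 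Finally, when $f^*=N(X_{t,n}\bm{\theta}_t^*,\phi_t^*)$: if $M_t\subset M_k$ the spurious coordinates of $\bm{\theta}_k^*$ vanish, so at least one factor of $d_k$ is zero and $c=0$; if $M_t\not\subset M_k$ the KL projection loads on all included predictors, every factor is strictly positive, and $c>0$.

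The main obstacle is controlling the accumulation of approximation error across the growing number $|k|=O(n^\alpha)$ of factors in the product $d_k$. Each factor contributes a relative correction of order $n^{-1}$ from the posterior covariance, and the product of these corrections behaves like $(1+O(n^{-1}))^{O(n^\alpha)}=\exp\{O(n^{\alpha-1})\}\to 1$ precisely because $\alpha<1$; making this uniform --- so that the per-coordinate Normal-moment expansions (polynomial case) or the neighbourhood concentration bounds (bounded case) hold simultaneously over all $|k|$ coordinates with errors that multiply to one --- is exactly where the eigenvalue condition does the essential work and where the most care is required.
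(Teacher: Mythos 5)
Your overall strategy matches the paper's: identify $g_k({\bf y}_n)$ as the posterior mean of $d_k$ under the Normal local-prior posterior, exploit concentration of $N(\bm{\theta}_k;{\bf m}_{k,n},\phi_k S_{k,n}^{-1})$ and of the error-variance posterior at $\phi_k^*$, interchange limits by dominated convergence, and finish with strong consistency of least squares under the eigenvalue bounds (the operative fact being $\mbox{tr}\left((X_{k,n}'X_{k,n})^{-1}\right)\le |k|/(an)\to 0$ since $|k|=O(n^{\alpha})$ with $\alpha<1$). The paper formalizes the limit interchange differently: it first shows $g_k$ is a continuous function of the sufficient statistic $({\bf m}_{k,n},X_{k,n}'X_{k,n},\hat{\phi}_{k,n})$ (Lemma \ref{lem:gcont_nlps}), applies the continuous mapping theorem, and then takes the limit in $n$ of the resulting deterministic sequence (Lemma \ref{lem:cn_convergence}); your direct split into inner and outer integrals reaches the same place. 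The one substantive divergence, and the one point where your plan is riskier, is the pMOM case: you propose expanding the closed-form product moments $E(\theta_{ki}^{2r})$ coordinate by coordinate and multiplying $1+O(n^{-1})$ corrections over $|k|=O(n^{\alpha})$ factors, and you correctly flag that making this uniform is the hard part --- but you leave it unexecuted. The paper sidesteps this entirely via the device of Lemma \ref{lem:bounded_penalty}: absorbing $\theta_{ki}^{2r}$ into the ratio $N(\theta_{ki};0,\tau\phi_k)/N(\theta_{ki};0,2\tau\phi_k)$ turns the pMOM penalty into a \emph{bounded} penalty integrated against the inflated-variance Normal posterior, reducing the polynomial case to the bounded case (at the cost of controlling the ratio $m_{k,2\tau}^L({\bf y}_n)/m_{k,\tau}^L({\bf y}_n)$, which is again handled by the eigenvalue condition). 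If you want a complete argument for growing $|k|$, adopting that reduction is cleaner than per-coordinate moment expansions. The remaining pieces of your sketch --- ${\bf m}_{k,n}$ and $\hat{\bm{\theta}}_{k,n}$ differing by a ridge term that is negligible under the eigenvalue bounds, continuity of $d_k$, and the $c=0$ versus $c>0$ dichotomy according to whether a coordinate of $\bm{\theta}_k^*$ vanishes --- agree with the paper's proof.
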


We note that the eigenvalue conditions are strongly related to MLE consistency \cite{lai:1979}.
So far we saw that NLPs improve model selection via an extra complexity penalty.
We now turn attention to parameter estimates conditional on a given $M_k$.

\begin{prop}
Let $M_k$ 
with fixed $|k|$ satisfy the conditions in \citeasnoun{walker:1969}.
Let $(\hat{\bm{\theta}}_k,\hat{\phi}_k)$ be an MLE and
$f_k({\bf y}_n \mid \btheta_k^*,\phi_k^*)$ minimize KL divergence to data-generating
density $f_t({\bf y}_n \mid \btheta_t, \phi_t)$.

\begin{enumerate}[(i)]
\item Let $\tilde{\bm{\theta}}_k$ be the posterior mode such that $\mbox{sign}(\tilde{\theta}_{ki})=\mbox{sign}(\hat{\theta}_{ki})$
for all $i$ under either a pMOM, peMOM or piMOM prior.
If $\theta_{ki}^* \neq 0$, then $n(\tilde{\theta}_{ki} - \hat{\theta}_{ki}) \stackrel{P}{\longrightarrow} c$
for some $0<c<\infty$ for the pMOM, peMOM and piMOM priors.
If $\theta_{ki}^* =0$ then
$n^2(\tilde{\theta}_{ki} - \hat{\theta}_{ki})^2 \stackrel{P}{\longrightarrow} c$ for pMOM
and $n \tilde{\theta}_{ki}^4 \stackrel{P}{\longrightarrow} c$ for peMOM and piMOM
and (distinct) $0<c<\infty$.
Further, if the MLE is unique then any other posterior mode is $O_p(n^{-1/2})$ (pMOM) or $O_p(n^{-1/4})$ (peMOM, piMOM).

\item The posterior mean $E(\theta_{ki} \mid M_k, {\bf y}_n)= \hat{\theta}_{ki} + O_p(n^{-1/2})= \theta_{ki}^* + O_p(n^{-1/2})$
under a pMOM and $\hat{\theta}_{ki} + O_p(n^{-1/4})=\theta_{ki}^* + O_p(n^{-1/4})$ under a peMOM or piMOM prior.

\item Let ${\bf y}_n \sim N(X_{n,k} \bm{\theta}_k, \phi_k)$ satisfy the conditions in Proposition \ref{prop:parsimony_lm}
with $|k|=O(n^{\alpha})$, $\alpha<1$ and diagonal $X_{n,k}'X_{n,k}$.
Then the rates in (i)-(ii) remain valid.
\end{enumerate}
\label{prop:postmode}
\end{prop}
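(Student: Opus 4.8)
The plan is to reduce every claim to the analysis of the stationary equation of the log-posterior. Writing the log-posterior up to additive constants as $L_n(\btheta_k)=\ell_n(\btheta_k)+\sum_{i\in M_k}\rho(\theta_{ki})+\log\pi(\phi_k\mid M_k)$, where $\ell_n$ is the log-likelihood and $\rho$ collects the per-coordinate log-prior contribution (the local-prior part together with $\log d_k$), any interior mode $\tbtheta_k$ solves $\partial_{\theta_{ki}}\ell_n(\tbtheta_k)+\rho'(\ttheta_{ki})=0$ for each $i$. Since the score vanishes at the MLE and the conditions of \citeasnoun{walker:1969} give local asymptotic normality with observed information $-\ell_n''(\bar{\btheta})=nI+o_p(n)$, I would expand the likelihood score about $\hbtheta_k$ to obtain the vector balance
\begin{equation*}
\bigl(nI+o_p(n)\bigr)(\tbtheta_k-\hbtheta_k)=\nabla\rho(\tbtheta_k),
\end{equation*}
whose $i$-th component (which decouples exactly under the diagonal design of part (iii)) reads $nI_{ii}(\ttheta_{ki}-\htheta_{ki})\approx\rho'(\ttheta_{ki})$. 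The whole of part (i) then follows from how the possibly singular prior score balances the $O(n)$ curvature.

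The key inputs are the per-coordinate prior scores $\rho'(\theta)=2/\theta-\theta/(\tau\phi_k)$ (pMOM), $\rho'(\theta)=-2/\theta+2\tau\phi_k/\theta^3$ (piMOM) and $\rho'(\theta)=-\theta/(\tau\phi_k)+2\tau\phi_k/\theta^3$ (peMOM). When $\theta_{ki}^*\neq 0$, both $\htheta_{ki}$ and $\ttheta_{ki}$ converge to $\theta_{ki}^*$, so $\rho'(\ttheta_{ki})$ tends to a finite nonzero constant; the balance forces $\ttheta_{ki}-\htheta_{ki}=O_p(n^{-1})$, giving $n(\ttheta_{ki}-\htheta_{ki})\to c$ for all three priors. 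When $\theta_{ki}^*=0$ the prior score blows up at the origin and the rate is fixed by the dominant singular term: for pMOM the $2/\theta$ term balanced against $nI_{ii}(\ttheta_{ki}-\htheta_{ki})$ fixes the displacement at the claimed rate, while for piMOM and peMOM the stronger $2\tau\phi_k/\theta^3$ term repels the mode to $\ttheta_{ki}=O_p(n^{-1/4})$, yielding $n\ttheta_{ki}^4\to c$. The secondary-mode claim comes from viewing the same equation as a polynomial-type fixed-point problem: with the MLE unique, the root tracking $\htheta_{ki}$ is the mode of interest, and every other real root is trapped near the origin at exactly $O_p(n^{-1/2})$ (pMOM) or $O_p(n^{-1/4})$ (piMOM, peMOM).

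For part (ii) I would pass from modes to the posterior mean by a Laplace-type argument: under Walker's conditions the posterior concentrates on an $n^{-1/2}$-neighborhood of the MLE, and expanding the normalized posterior about the relevant mode shows the mean inherits the mode's displacement, giving $E(\theta_{ki}\mid M_k,{\bf y}_n)=\htheta_{ki}+O_p(n^{-1/2})$ (pMOM) and $+O_p(n^{-1/4})$ (piMOM, peMOM); combining with $\htheta_{ki}=\theta_{ki}^*+O_p(n^{-1/2})$ yields the oracle statements. Part (iii) is structurally the cleanest: with $X_{n,k}'X_{n,k}$ diagonal the coordinates decouple, and because the Gaussian log-likelihood is exactly quadratic the balance holds exactly with $nI_{ii}$ replaced by $(X_{n,k}'X_{n,k})_{ii}/\phi_k$; the eigenvalue bounds of Proposition \ref{prop:parsimony_lm} guarantee $(X_{n,k}'X_{n,k})_{ii}$ of order $n$, so the per-coordinate arguments transfer verbatim, the only extra work being to verify that the remainder and the plug-in of $\phi_k^*$ stay uniformly negligible as $|k|=O(n^{\alpha})$, $\alpha<1$, grows.

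The main obstacle is the non-smoothness (blow-up) of $\rho'$ at the origin when $\theta_{ki}^*=0$, which destroys the naive quadratic expansion and creates genuine multimodality, especially for the $\theta^{-3}$ singularity of piMOM and peMOM. The delicate points are therefore (a) showing the stationary equation has a root of the claimed order with the prescribed sign and separating it cleanly from the spurious near-origin roots, and (b) controlling the Laplace remainder in (ii) with a prior whose score is unbounded near $0$; in the growing-dimension regime of (iii) these controls must in addition be made uniform over the $O(n^{\alpha})$ coordinates.
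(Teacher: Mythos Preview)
Your approach to part (i) is essentially the paper's: both expand the log-likelihood score about the MLE to get the balance $nI_{ii}(\ttheta_{ki}-\htheta_{ki})\approx\rho'(\ttheta_{ki})$ and then read off the rates from the singularity of $\rho'$; the case analysis and the treatment of secondary modes match. Part (iii) also matches (exact quadratic likelihood plus coordinate decoupling under a diagonal design).

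For part (ii) your mechanism is not quite right. The paper applies the higher-order Laplace formula of \citeasnoun{kass:1990} (their Theorem 4), which gives
\[
E(\theta_i\mid{\bf y}_n)\approx\ttheta_i-\frac{1}{2n}\sum_j h_{ij}\,h^{jj}\,h_{jjj},
\]
and the crucial observation is that the third log-prior derivative $h_{jjj}$ \emph{blows up} at spurious coordinates: it is $O_p(n^{1/2})$ for pMOM (since $\partial^3\log\pi/\partial\theta_j^3\propto\theta_j^{-3}$ with $\ttheta_j=O_p(n^{-1/2})$) and $O_p(n^{1/4})$ for peMOM/piMOM (dominant term $\propto\theta_j^{-5}$ with $\ttheta_j=O_p(n^{-1/4})$). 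This inflates the correction to $O_p(n^{-1/2})$ and $O_p(n^{-1/4})$ respectively, and the contribution of the $2^{|k|}-1$ secondary modes (each of the corresponding size by part (i)) is then folded in. Your phrase ``the mean inherits the mode's displacement'' is misleading: at the main mode the displacement from the MLE for a nonzero coordinate is $O_p(n^{-1})$, so the slower rates in (ii) cannot come from that; they come precisely from the blow-up of the third-derivative correction at spurious coordinates together with the secondary modes. You do flag the unbounded prior score as the delicate point, which is correct, but the concrete route is through $h_{jjj}$, not through the leading-order mode location.
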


Conditional on $M_k$ spurious parameter estimates under NLPs converge to 0 at
either the same (pMOM) or slightly slower rate (peMOM,piMOM) than the MLE.
As shown in the next proposition, BMA combines these estimates with weights $P(M_k \mid {\bf y}_n)$
to achieve fast polynomial (pMOM) or quasi-exponential shrinkage rates (peMOM,piMOM).

\begin{prop}
Let $E(\theta_i \mid {\bf y}_n)$ be the BMA posterior mean in (\ref{eq:bma}), $\mbox{BF}_{kt}$ the Bayes factor between $M_k$ and $M_t$
and $\htheta_{ki}$ the MLE of $\theta_i$ under $M_k$.
\begin{enumerate}[(i)]
\item Assume that all $M_k$ satisfy Walker's conditions and $|K|$ is fixed.
Denote by $M_t$ the data-generating model and assume that $\mbox{KL}(M_t,M_k)>0$ for any $k$ such that $M_t \not\subset M_k$.
If $M_t \not\subset M_k$ then $\mbox{BF}_{kt}=O_p(e^{-n})$ under a pMOM, peMOM or piMOM prior.
If $M_t \subset M_k$ then
$\mbox{BF}_{kt}=O_p(n^{-\frac{3}{2}(|k|-|t|)})$ under a pMOM prior and
$\mbox{BF}_{kt}=O_p(e^{-\sqrt{n}})$ under either a peMOM or piMOM prior.

\item Assume the conditions in (i) and that $P(M_k)/P(M_t)=o(n^{(|k|-|t|)})$.
Let $\pi_{|t|+1}= \mbox{max}_k P(M_k)$ where $|k|=|t|+1$, $M_t \subset M_k$.
If $\theta_i^* \neq 0$ then $E(\theta_i \mid {\bf y}_n)= \htheta_{ti} + O_p(n^{-1})$
under pMOM, peMOM or piMOM priors.
If $\theta_i^*=0$ then under pMOM priors
$$E(\theta_i \mid {\bf y}_n)= O_p(n^{-2}) \frac{\pi_{|t|+1}}{P(M_t)}$$
and under peMOM or piMOM priors 
$$E(\theta_i \mid {\bf y}_n)= O_p(e^{-\sqrt{n}}) \frac{\pi_{|t|+1}}{P(M_t)}.$$

\item Consider linear models as in Proposition \ref{prop:postmode}(iii)
and known residual variance $\phi$. Let $\delta_i=\mbox{I}(\theta_i \neq 0)$ for $i=1,\ldots,|K|$ and 
assume that the prior $P(\delta_1,\ldots,\delta_p)$ is exchangeable.
If $\theta_i^* \neq 0$ then
$E(\theta_i \mid {\bf y}_n,\phi)= \htheta_{ti} + O_p(n^{-1})$ for pMOM, peMOM and piMOM.
If $\theta_i^*=0$ then
$E(\theta_i \mid {\bf y}_n,\phi)= O_p(n^{-2}) P(\delta_i=1)/P(\delta_i=0)$ for pMOM
and $E(\theta_i \mid {\bf y}_n,\phi)= e^{-\sqrt{n}O_p(1)} P(\delta_i=1)/P(\delta_i=0)$ for peMOM and piMOM.
\end{enumerate}
\label{prop:bma_postmean}
\end{prop}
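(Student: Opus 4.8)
The plan is to build all three parts on the factorization $m_k({\bf y}_n)=m_k^L({\bf y}_n)g_k({\bf y}_n)$ from Proposition \ref{prop:parsimony_general}(i), so that every Bayes factor splits as $\mbox{BF}_{kt}=\mbox{BF}_{kt}^L\,g_k({\bf y}_n)/g_t({\bf y}_n)$. The local-prior factor $\mbox{BF}_{kt}^L$ is controlled by standard Laplace/Schwarz asymptotics under Walker's conditions: it is $O_p(e^{-n})$ whenever $\mbox{KL}(M_t,M_k)>0$ (i.e. $M_t\not\subset M_k$) and $O_p(n^{-\frac12(|k|-|t|)})$ when $M_t\subset M_k$, the latter being the usual Occam factor from the $|k|-|t|$ superfluous dimensions \cite{dawid:1999}. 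The genuinely new ingredient is the rate at which the NLP correction $g_k({\bf y}_n)$ vanishes for over-fitted models. Writing $g_k$ as the mean of $d_k$ under the LP posterior and using that each spurious coordinate has an LP posterior concentrating at $0$ at rate $n^{-1/2}$, I would show: for the pMOM penalty $E[\theta_{ki}^2\mid{\bf y}_n]=O_p(n^{-1})$, so $g_k({\bf y}_n)=O_p(n^{-(|k|-|t|)})$; for the piMOM/peMOM penalties a Laplace (saddle-point) evaluation of $\int\exp\{-\tau\phi_k/\theta_{ki}^2\}$ against a density concentrating at $0$ yields $g_k({\bf y}_n)=e^{-O_p(\sqrt n)}$, the exponent arising from minimizing $\tau\phi_k/\theta^2+n\theta^2/(2\phi_k)$, which balances at $\theta^2\asymp n^{-1/2}$ with optimal value $\asymp\sqrt n$. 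Combining these with the local rates gives part (i): $O_p(n^{-\frac32(|k|-|t|)})$ for pMOM and $O_p(e^{-\sqrt n})$ for piMOM/peMOM in the nested case, and $O_p(e^{-n})$ in the non-nested case since $g_k/g_t$ then tends to a positive constant. I expect this saddle-point evaluation of $g_k$ to be the main obstacle, as it is what produces the quasi-exponential $e^{-\sqrt n}$ rate.

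For part (ii) I would work from the centred identity $E(\theta_i\mid{\bf y}_n)-\htheta_{ti}=\sum_k[E(\theta_i\mid M_k,{\bf y}_n)-\htheta_{ti}]P(M_k\mid{\bf y}_n)$, writing $P(M_k\mid{\bf y}_n)=r_k/\sum_{k'}r_{k'}$ with $r_k=\mbox{BF}_{kt}P(M_k)/P(M_t)$ and $r_t=1$. The prior-growth assumption $P(M_k)/P(M_t)=o(n^{|k|-|t|})$ combined with the rates of part (i) gives $r_k=o_p(1)$ for every $k\neq t$, hence $P(M_t\mid{\bf y}_n)\to 1$ and, more precisely, $\sum_{k\neq t}P(M_k\mid{\bf y}_n)=o_p(n^{-1/2})$. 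When $\theta_i^*\neq 0$ the $k=t$ term dominates: using the sharpened conditional expansion $E(\theta_i\mid M_t,{\bf y}_n)=\htheta_{ti}+O_p(n^{-1})$ (the leading prior-gradient correction is $O(n^{-1})$ at a non-zero $\theta_{ti}^*$, a refinement of the $O_p(n^{-1/2})$ worst-case bound in Proposition \ref{prop:postmode}(ii)), while the $k\neq t$ terms contribute $O_p(n^{-1/2})\,o_p(n^{-1/2})=o_p(n^{-1})$, the total is $O_p(n^{-1})$. When $\theta_i^*=0$ only models with $i\in M_k$ contribute, and among these the single-extra model $M_k=M_t\cup\{i\}$ dominates; its weight is $O_p(\mbox{BF}_{kt})\pi_{|t|+1}/P(M_t)$ and its conditional mean is the spurious-coordinate rate $O_p(n^{-1/2})$ (pMOM) or $O_p(n^{-1/4})$ (piMOM/peMOM) from Proposition \ref{prop:postmode}, and multiplying the two reproduces $O_p(n^{-2})\pi_{|t|+1}/P(M_t)$ for pMOM and $O_p(e^{-\sqrt n})\pi_{|t|+1}/P(M_t)$ for piMOM/peMOM, the polynomial factor being absorbed into the exponential.

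Part (iii) I would reduce to a coordinate-wise computation. With known $\phi$, diagonal $X_{n,k}'X_{n,k}$ and the product form of the NLP and LP densities, both likelihood and prior factor across coordinates, so conditional on the inclusion vector $\bm{\delta}$ the posterior of $\theta_i$ depends only on $\delta_i$, and $E(\theta_i\mid{\bf y}_n,\phi)=P(\delta_i=1\mid{\bf y}_n)\,E(\theta_i\mid\delta_i=1,{\bf y}_n,\phi)$. Writing $P(\bm\delta\mid{\bf y}_n)\propto P(\bm\delta)\prod_{j:\delta_j=1}B_j$ with $B_j$ the per-coordinate NLP Bayes factor, exchangeability of $P(\bm\delta)$ reduces the combinatorial model sum to the marginal inclusion odds, giving $P(\delta_i=1\mid{\bf y}_n)/P(\delta_i=0\mid{\bf y}_n)=O_p(B_i)\,P(\delta_i=1)/P(\delta_i=0)$, the remaining sum over $\bm\delta_{-i}$ concentrating on the true configuration because $B_j\to\infty$ for active and $B_j\to0$ for inactive coordinates (Proposition \ref{prop:parsimony_lm}). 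Since $B_i$ is the single-coordinate analogue of part (i), it is $O_p(n^{-3/2})$ (pMOM) or $e^{-O_p(\sqrt n)}$ (piMOM/peMOM) for a spurious $i$; multiplying by the spurious conditional mean and by the prior odds yields $O_p(n^{-2})P(\delta_i=1)/P(\delta_i=0)$ and $e^{-\sqrt n O_p(1)}P(\delta_i=1)/P(\delta_i=0)$, while an active $i$ gives $P(\delta_i=1\mid{\bf y}_n)\to1$ and $E(\theta_i\mid\delta_i=1,{\bf y}_n,\phi)=\htheta_{ti}+O_p(n^{-1})$ exactly as in part (ii). The factorization is what keeps the growing number of models ($|k|=O(n^\alpha)$) under control; the delicate step here is justifying the concentration of the $\bm\delta_{-i}$-sum uniformly in the growing dimension.
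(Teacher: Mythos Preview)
Your proposal is correct in outline and arrives at the same rates, but for Part (i) you take a genuinely different route from the paper. You work through the decomposition $\mbox{BF}_{kt}=\mbox{BF}_{kt}^L\,g_k({\bf y}_n)/g_t({\bf y}_n)$ of Proposition~\ref{prop:parsimony_general}, controlling the LP Bayes factor by the standard Schwarz rate and then quantifying the extra NLP penalty $g_k$ as a posterior expectation under the LP. The paper does not go through $g_k$ at all for Part (i): it applies a Laplace expansion directly to the NLP integrated likelihood, summing over the $2^{|k|}$ posterior modes located in Proposition~\ref{prop:postmode}(i), and reads off the extra factor from the value of the penalty $\prod_i(\tilde\theta_{ki})^2$ or $\prod_i\exp\{-\tau\tilde\phi/\tilde\theta_{ki}^2\}$ at the dominant mode. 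What your route buys is a clean separation of the Occam factor $n^{-\frac12(|k|-|t|)}$ from the NLP contribution, and it makes transparent why pMOM gives an additional $n^{-(|k|-|t|)}$ while piMOM/peMOM give $e^{-c\sqrt n}$. What the paper's route buys is that it never needs to argue that the LP-posterior product moment $E_{\pi^L}[\prod_i\theta_{ki}^2\mid{\bf y}_n]$ factorises asymptotically (which, though true, requires more than your one-line ``$E[\theta_{ki}^2\mid{\bf y}_n]=O_p(n^{-1})$''; the expectation of a product involves cross-covariances and higher cumulants that must all be shown to be of the right order). Interestingly your saddle-point computation for the piMOM/peMOM $g_k$---balancing $\tau\phi/\theta^2$ against $n\theta^2/(2\phi)$ at $\theta^2\asymp n^{-1/2}$---is exactly the calculation that in the paper appears as the posterior-mode rate $n\tilde\theta_{ki}^4\stackrel{P}{\to}c$ in Proposition~\ref{prop:postmode}(i); the two approaches are doing the same Laplace analysis, just parameterised differently.

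For Part (ii) your centred identity and the paper's direct expansion of $E(\theta_i\mid{\bf y}_n)$ are essentially the same argument. Your version is slightly more careful: you explicitly note that the $O_p(n^{-1})$ rate for $E(\theta_i\mid M_t,{\bf y}_n)-\htheta_{ti}$ requires the refinement that under $M_t$ \emph{all} coordinates are non-spurious (so no $O_p(n^{-1/2})$ correction from secondary modes), which the paper uses without stating. For Part (iii) both approaches exploit the coordinate-wise factorisation under orthogonality; the paper marginalises $\omega$ in the de Finetti representation to obtain an explicit formula for $P(\delta_i=1\mid{\bf y}_n)$, whereas you argue via concentration of the $\bm\delta_{-i}$-sum. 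You are right to flag this concentration step as the delicate one: with a genuinely exchangeable (non-i.i.d.) prior the marginal inclusion odds involve the \emph{posterior} distribution of $\omega$, not the prior, so the factor you and the paper write as $P(\delta_i=1)/P(\delta_i=0)$ is really an $O_p(1)$ posterior odds ratio; this does not affect the stated rates but is worth being explicit about.
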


When setting $\theta_i^{2r}$ in the pMOM prior the $O_p(n^{-2})$ for spurious coefficients becomes $O_p(n^{-\frac{3}{2}r-\frac{1}{2}})$,
and should be compared to an $O_p(n^{-1})$ shrinkage obtained with LPs.
Interestingly, this term only affects spurious coefficients (asymptotically).
The result also clarifies the role of sparse $P(M_k)$ in inducing even stronger shrinkage.

\section{Non-local priors as truncation mixtures}
\label{sec:theory}

We establish a correspondence between NLPs and truncation mixtures.
Our discussion is conditional on $M_k$, hence for simplicity we omit $\phi$
and denote $\pi(\bm{\theta})= \pi(\bm{\theta} \mid M_k)$, $p=\mbox{dim}(\Theta_k)$.
All proofs are in the Appendix.

\subsection{Equivalence between NLPs and truncation mixtures}
\label{ssec:equiv_nlp_trunc}

We show that truncation mixtures define valid NLPs, and subsequently that any NLP may be represented in this manner.
Given that the representation is not unique, we give two constructions and discuss their merits.
Let $\pi^L(\bm{\theta})$ be a LP on $\bm{\theta}$
and $\lambda \in \real^{+}$ a latent truncation point. 
\begin{prop}
Define $\pi(\bm{\theta} \mid \lambda) \propto \pi^L(\bm{\theta})
\mbox{I}(d(\bm{\theta}) > \lambda)$, where 
$\mathop {\lim }\limits_{\btheta \to \btheta_0} d(\bm{\theta})= 0$
for any $\bm{\theta}_0 \in \Theta_{k'} \subset \Theta_k$,
and $\pi^L(\bm{\theta})$ is bounded in a neighborhood of $\bm{\theta}_0$.
Let $\pi(\lambda)$ be a marginal prior for $\lambda$
placing no probability mass at $\lambda=0$.
Then
$\pi(\bm{\theta})= \int_{}^{}\!\, \pi(\bm{\theta} \mid \lambda) \pi(\lambda) d\lambda$
defines a NLP.
\label{prop1}
\end{prop}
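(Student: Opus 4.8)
The plan is to obtain an explicit integral representation of the marginal prior $\pi(\bm{\theta})$ and then pass to the limit $\bm{\theta}\to\bm{\theta}_0$ inside it. First I would write the conditional density together with its normalizing constant
\[
C(\lambda)=\int \pi^L(\bm{\theta})\,\mbox{I}(d(\bm{\theta})>\lambda)\,d\bm{\theta}=P^L\big(d(\bm{\theta})>\lambda\big),
\]
so that $\pi(\bm{\theta}\mid\lambda)=\pi^L(\bm{\theta})\,\mbox{I}(d(\bm{\theta})>\lambda)/C(\lambda)$. Integrating against $\pi(\lambda)$ and using $\mbox{I}(d(\bm{\theta})>\lambda)=\mbox{I}(\lambda<d(\bm{\theta}))$ to cut the range of integration down to $(0,d(\bm{\theta}))$ yields
\[
\pi(\bm{\theta})=\pi^L(\bm{\theta})\int_0^{d(\bm{\theta})}\frac{\pi(\lambda)}{C(\lambda)}\,d\lambda .
\]
Properness of $\pi(\bm{\theta})$ is then immediate by Tonelli, since each $\pi(\bm{\theta}\mid\lambda)$ and $\pi(\lambda)$ integrate to one; implicitly I take $\pi(\lambda)$ supported where $C(\lambda)>0$ so the conditionals are well defined.

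The key quantity to control is $C(\lambda)$ for small $\lambda$. Since $C$ is nonincreasing in $\lambda$ and the zero set $\{\bm{\theta}:d(\bm{\theta})=0\}$ lies in the union of the submodels $\Theta_{k'}$, which has zero Lebesgue measure, monotone convergence gives $C(\lambda)\uparrow P^L(d(\bm{\theta})>0)=1$ as $\lambda\downarrow 0$; only positivity of this limit is actually needed. Hence there exist $\epsilon>0$ and $c_0>0$ with $C(\lambda)\ge c_0$ for all $\lambda\in(0,\epsilon)$, so that on this range the mixing integrand $\pi(\lambda)/C(\lambda)$ is dominated by $\pi(\lambda)/c_0$ and in particular remains integrable near the origin.

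Finally, to verify Definition 1, fix $\bm{\theta}_0\in\Theta_{k'}$ and let $\bm{\theta}\to\bm{\theta}_0$, so $d(\bm{\theta})\to 0$. Combining the assumed local bound $\pi^L(\bm{\theta})\le B$ near $\bm{\theta}_0$ with $C(\lambda)\ge c_0$ on $(0,d(\bm{\theta}))\subset(0,\epsilon)$,
\[
\pi(\bm{\theta})\le\frac{B}{c_0}\int_0^{d(\bm{\theta})}\pi(\lambda)\,d\lambda=\frac{B}{c_0}\,P\big(0<\lambda<d(\bm{\theta})\big),
\]
and because $\pi(\lambda)$ places no mass at $\lambda=0$ the right-hand side vanishes as $d(\bm{\theta})\to 0$; therefore $\lim_{\bm{\theta}\to\bm{\theta}_0}\pi(\bm{\theta})=0$. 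The main obstacle is not the limit itself but the preceding step: showing $C(\lambda)$ stays bounded away from $0$ as $\lambda\downarrow 0$, so the mixing weight $1/C(\lambda)$ does not offset the shrinking truncation region. This is exactly where the measure-zero structure of the submodels and the boundedness of $\pi^L$ near $\bm{\theta}_0$ enter, and it also explains why no prior mass at $\lambda=0$ is essential — an atom there would contribute an untruncated LP component that does not vanish at $\bm{\theta}_0$.
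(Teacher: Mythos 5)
Your proof is correct and follows essentially the same route as the paper's: write the marginal as $\pi^L(\bm{\theta})\int_0^{d(\bm{\theta})}\pi(\lambda)/h(\lambda)\,d\lambda$, bound the normalizing constant $h(\lambda)$ away from zero for small $\lambda$, and conclude via the local bound on $\pi^L$ and the absence of an atom of $\pi(\lambda)$ at zero. Your explicit justification that $h(\lambda)$ stays bounded below (and that only positivity, not convergence to $1$, is needed) is slightly more careful than the paper's, but it is the same argument.
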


\begin{cor}
Assume that $d(\bm{\theta})= \prod_{i=1}^{p} d_i(\theta_i)$.
Let $\pi(\bm{\theta} \mid \bm{\lambda}) \propto \pi^L(\bm{\theta}) \prod_{i=1}^{p} \mbox{I} \left( d_i(\theta_i) > \lambda_i \right)$
where $\bm{\lambda}=(\lambda_1,\ldots,\lambda_p)'$  
have  an absolutely continuous prior $\pi(\bm{\lambda})$.
Then $\int_{}^{}\!\, \pi(\bm{\theta} \mid \bm{\lambda}) \pi(\bm{\lambda}) d \bm{\lambda}$
is a NLP.
\label{cor2}
\end{cor}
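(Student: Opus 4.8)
The plan is to verify Definition 1 directly for the marginal $\pi(\bm{\theta})=\int \pi(\bm{\theta}\mid\bm{\lambda})\pi(\bm{\lambda})\,d\bm{\lambda}$, showing $\pi(\bm{\theta})\to 0$ as $\bm{\theta}\to\bm{\theta}_0$ for every $\bm{\theta}_0\in\Theta_{k'}\subset\Theta_k$; the strategy mirrors Proposition \ref{prop1}, the only new feature being the vector of thresholds. First I would fix notation for the normalizing constant,
$$C(\bm{\lambda})=\int \pi^L(\bm{\theta})\prod_{i=1}^{p}\mbox{I}\left(d_i(\theta_i)>\lambda_i\right)\,d\bm{\theta},$$
so that $\pi(\bm{\theta}\mid\bm{\lambda})=\pi^L(\bm{\theta})\prod_i\mbox{I}(d_i(\theta_i)>\lambda_i)/C(\bm{\lambda})$. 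A short Fubini argument gives properness, $\int\pi(\bm{\theta})\,d\bm{\theta}=\int[\int\pi(\bm{\theta}\mid\bm{\lambda})\,d\bm{\theta}]\pi(\bm{\lambda})\,d\bm{\lambda}=1$, and absolute continuity of $\pi(\bm{\theta})$ follows from that of each $\pi(\bm{\theta}\mid\bm{\lambda})$. Factoring out the local prior, the substantive object is
$$\pi(\bm{\theta})=\pi^L(\bm{\theta})\int_{R(\bm{\theta})}\frac{\pi(\bm{\lambda})}{C(\bm{\lambda})}\,d\bm{\lambda},\qquad R(\bm{\theta})=\{\bm{\lambda}:\,0<\lambda_i<d_i(\theta_i),\ i=1,\ldots,p\},$$
since the product of indicators equals one exactly when every $\lambda_i<d_i(\theta_i)$.

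The second step is to control the two $\bm{\theta}$-dependent pieces as $\bm{\theta}\to\bm{\theta}_0$. Because $\bm{\theta}_0$ lies in a strict submodel, at least one coordinate — say $i_0$ — approaches the value at which $d_{i_0}\to 0$, so $d_{i_0}(\theta_{i_0})\to 0$. The region $R(\bm{\theta})$ then collapses in its $\lambda_{i_0}$-extent, and I bound its mass by a single marginal probability, $\int_{R(\bm{\theta})}\pi(\bm{\lambda})\,d\bm{\lambda}\le P(\lambda_{i_0}<d_{i_0}(\theta_{i_0}))$. To handle the denominator I would use that $C$ is non-increasing in each $\lambda_i$ (enlarging a threshold only shrinks the integration set), so on $R(\bm{\theta})$ one has $C(\bm{\lambda})\ge C(\bm{d})$ with $\bm{d}=(d_1(\theta_1),\ldots,d_p(\theta_p))$; this yields $\pi(\bm{\theta})\le \pi^L(\bm{\theta})\,C(\bm{d})^{-1}\,P(\lambda_{i_0}<d_{i_0}(\theta_{i_0}))$.

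It then remains to take limits. As $\bm{\theta}\to\bm{\theta}_0$, $\pi^L(\bm{\theta})$ stays bounded by hypothesis; the corner value $C(\bm{d})$ converges to $C$ at the limiting thresholds and, since lowering $\lambda_{i_0}$ only increases $C$, is bounded away from $0$; and $P(\lambda_{i_0}<d_{i_0}(\theta_{i_0}))\to P(\lambda_{i_0}\le 0)=0$ by absolute continuity of $\pi(\bm{\lambda})$ (no atom at, and no mass below, $0$). Hence $\pi(\bm{\theta})\to 0$ and Definition 1 holds. The main obstacle I anticipate is not the limit itself but justifying that $C(\bm{d})$ stays bounded below: I must rule out the degenerate case in which the remaining thresholds $d_i(\theta_{0,i})$, $i\neq i_0$, drive the truncated set to null measure, which is where the product structure and positivity of $\pi^L$ on an open set enter, ensuring $\{\bm{\theta}':d_i(\theta_i')>d_i(\theta_{0,i})\ \forall i\neq i_0\}$ retains positive $\pi^L$-mass. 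A cleaner alternative is to invoke dominated convergence to pass the limit inside $\int_{R(\bm{\theta})}\pi(\bm{\lambda})/C(\bm{\lambda})\,d\bm{\lambda}$ directly, observing that the integrand's domain shrinks to a $\lambda_{i_0}$-null set.
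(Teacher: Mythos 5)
Your proof is correct and follows essentially the same route as the paper, whose own proof of this corollary is a one-line instruction to replace $\mbox{I}(d(\bm{\theta})>\lambda)$ by $\prod_i \mbox{I}(d_i(\theta_i)>\lambda_i)$ in the proof of Proposition \ref{prop1} and let one $\lambda_i \to 0$: in both cases the marginal is bounded by $\pi^L(\bm{\theta})$ times the cdf of the collapsing threshold over a lower bound on the normalizing constant. Your treatment is in fact more explicit than the paper's about the one genuinely new multivariate issue — that the normalizing constant must stay bounded away from zero at the corner $\bm{d}$, where the non-vanishing coordinates $d_i(\theta_{0,i})$, $i\neq i_0$, still truncate — which you correctly flag and resolve via monotonicity and positivity of $\pi^L$ on the relevant set.
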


This alternative representation can be convenient for sampling (as illustrated later on)
or to avoid the marginal dependency between elements in $\bm{\theta}$ induced by a common truncation.

\begin{example}
Consider ${\bf y}_n \sim N(X \bm{\theta}, \phi I)$, where
$\bm{\theta} \in \real^p$, $\phi$ is known and $I$ is the $n \times n$ identity matrix.
We define a NLP  for $\bm{\theta}$ with a single truncation point with
 $\pi(\bm{\theta} \mid \lambda) \propto N(\bm{\theta}; {\bf 0}, \tau I)
I \left( \prod_{i=1}^{p} \theta_i^2 > \lambda \right)$
and some $\pi(\lambda)$, {\it e.g.} Gamma or Inverse Gamma.
Obviously, the choice of $\pi(\lambda)$ affects $\pi(\bm{\theta})$ (Section \ref{ssec:nlp_properties}).
An alternative prior is
$\pi(\bm{\theta} \mid \lambda_1,\ldots,\lambda_p) \propto N(\bm{\theta}; {\bf 0}, \tau I)
\prod_{i=1}^{p} I \left( \theta_i^2 > \lambda_i \right)$,
giving marginal independence when $\pi(\lambda_1,\ldots,\lambda_p)$ has independent components.
\label{ex1}
\end{example}

We address the reverse question: given any NLP, a truncation representation is always possible.
\begin{prop}
Let $\pi(\bm{\theta}) \propto d(\bm{\theta}) \pi^L(\bm{\theta})$
be an arbitrary NLP
and denote $h(\lambda)=P_u \left(d(\bm{\theta}) > \lambda \right)$,
where $P_u(\cdot)$ is the probability under $\pi^L(\bm{\theta})$.
Then $\pi(\bm{\theta})$ is the marginal prior associated to
$\pi(\bm{\theta} \mid \lambda) \propto \pi^L(\bm{\theta})
\mbox{I}(d(\bm{\theta}) > \lambda)$
and 
$\pi(\lambda) = h(\lambda) / E_u \left( d( \bm{\theta} ) \right) \propto h(\lambda),$
where $E_u \left( \cdot \right)$ is the expectation
with respect to $\pi^L(\bm{\theta})$.
\label{prop3}
\end{prop}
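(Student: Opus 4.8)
The plan is to verify the claimed representation by a direct computation whose two engines are the identity $\int_0^\infty \mbox{I}(d(\bm{\theta})>\lambda)\,d\lambda = d(\bm{\theta})$, valid because $d(\bm{\theta})\ge 0$ (being the ratio $\pi/\pi^L$ of two densities), and the tail-integral formula $\int_0^\infty h(\lambda)\,d\lambda = E_u(d(\bm{\theta}))$. Everything in sight is nonnegative, so no integrability subtleties arise beyond a single finiteness condition.

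First I would pin down the normalizing constant of the conditional prior. Since $\pi^L$ is a proper density and $P_u$ denotes probability under it, integrating $\pi^L(\bm{\theta})\mbox{I}(d(\bm{\theta})>\lambda)$ over $\bm{\theta}$ gives exactly $P_u(d(\bm{\theta})>\lambda)=h(\lambda)$. Hence $\pi(\bm{\theta}\mid\lambda)=\pi^L(\bm{\theta})\mbox{I}(d(\bm{\theta})>\lambda)/h(\lambda)$ wherever $h(\lambda)>0$. Next I would confirm $\pi(\lambda)$ is a proper density: by the layer-cake representation of the expectation of the nonnegative quantity $d(\bm{\theta})$, one has $\int_0^\infty h(\lambda)\,d\lambda = \int_0^\infty P_u(d(\bm{\theta})>\lambda)\,d\lambda = E_u(d(\bm{\theta}))$, which is finite by the properness assumption $\int d(\bm{\theta})\pi^L(\bm{\theta})\,d\bm{\theta}<\infty$ stated earlier. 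Dividing by $E_u(d(\bm{\theta}))$ therefore makes $\pi(\lambda)=h(\lambda)/E_u(d(\bm{\theta}))$ integrate to one, and since $h$ places no atom at $0$ it carries no mass at $\lambda=0$, as required by Proposition \ref{prop1}.

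For the marginal, I would substitute these two expressions into $\int_0^\infty \pi(\bm{\theta}\mid\lambda)\pi(\lambda)\,d\lambda$; the factor $h(\lambda)$ cancels, leaving $\frac{\pi^L(\bm{\theta})}{E_u(d(\bm{\theta}))}\int_0^\infty \mbox{I}(d(\bm{\theta})>\lambda)\,d\lambda = \frac{\pi^L(\bm{\theta})\,d(\bm{\theta})}{E_u(d(\bm{\theta}))}$, where the last equality uses the tail-length identity and where Tonelli's theorem justifies the implicit interchange of the $\lambda$- and $\bm{\theta}$-integrations (legitimate by nonnegativity). Finally I would identify this with $\pi(\bm{\theta})$: the normalizing constant of $\pi(\bm{\theta})\propto d(\bm{\theta})\pi^L(\bm{\theta})$ is $\int d(\bm{\theta})\pi^L(\bm{\theta})\,d\bm{\theta}=E_u(d(\bm{\theta}))$, so $\pi(\bm{\theta})=d(\bm{\theta})\pi^L(\bm{\theta})/E_u(d(\bm{\theta}))$, matching the computed marginal exactly and giving $\pi(\lambda)\propto h(\lambda)$.

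The argument is essentially routine, so I do not expect a substantial obstacle. The only points demanding care are the bookkeeping of normalizing constants, namely that the conditional normalizes by $h(\lambda)$ while $\pi(\bm{\theta})$ normalizes by $E_u(d(\bm{\theta}))$, together with the nonnegativity of $d$ that underpins both the tail-integral identities and the Tonelli interchange.
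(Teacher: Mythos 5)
Your proposal is correct and follows essentially the same route as the paper's proof: both substitute $\pi(\lambda)\propto h(\lambda)$ into the mixture, cancel $h(\lambda)$ to reduce the marginal to $\pi^L(\bm{\theta})\int_0^{d(\bm{\theta})}d\lambda = \pi^L(\bm{\theta})d(\bm{\theta})$ up to a constant, and use the tail-integral identity $\int_0^\infty h(\lambda)\,d\lambda = E_u(d(\bm{\theta}))$ to verify that $\pi(\lambda)$ is proper. Your version merely makes the normalizing constants and the Tonelli interchange more explicit.
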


\begin{cor}
Let $\pi(\bm{\theta}) \propto \pi^L(\bm{\theta}) \prod_{i=1}^{p} d_i(\theta_i)$ be a NLP,
$h(\bm{\lambda})= P_u(d_1(\theta_1) > \lambda_1, \ldots,d_p(\theta_p) >\lambda_p)$
and assume that $\int_{}^{}\!\, h(\bm{\lambda}) d \bm{\lambda} < \infty$.
Then $\pi(\bm{\theta})$ is the marginal prior associated to 
$\pi(\bm{\theta} \mid \bm{\lambda}) \propto \pi^L(\bm{\theta}) 
\prod_{i=1}^{p} \mbox{I}(\theta_i > \lambda_i)$ and
$\pi(\bm{\lambda}) \propto h(\bm{\lambda})$. 
\label{cor4}
\end{cor}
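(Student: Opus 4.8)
The plan is to mimic the proof of Proposition \ref{prop3}, running the layer-cake argument separately in each of the $p$ coordinates. The single fact driving everything is that for each nonnegative factor, $d_i(\theta_i) = \int_0^\infty \mbox{I}(d_i(\theta_i) > \lambda_i)\, d\lambda_i$. Multiplying these $p$ identities and interchanging product and integration by Tonelli (every integrand is nonnegative, so no integrability hypothesis is needed for the swap) gives
\begin{align*}
\prod_{i=1}^p d_i(\theta_i) = \int_{(\real^+)^p} \prod_{i=1}^p \mbox{I}(d_i(\theta_i) > \lambda_i)\, d\bm{\lambda},
\end{align*}
so that $\pi(\bm{\theta}) \propto \pi^L(\bm{\theta}) \int \prod_i \mbox{I}(d_i(\theta_i) > \lambda_i)\, d\bm{\lambda}$, which already exhibits $\pi(\bm{\theta})$ as a $\bm{\lambda}$-mixture of the truncated priors appearing in the statement.

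Next I would identify the two factors inside the mixture. For fixed $\bm{\lambda}$ the normalizing constant of $\pi^L(\bm{\theta}) \prod_i \mbox{I}(d_i(\theta_i) > \lambda_i)$ is exactly $\int \pi^L(\bm{\theta}') \prod_i \mbox{I}(d_i(\theta_i') > \lambda_i)\, d\bm{\theta}' = P_u(d_1(\theta_1) > \lambda_1, \ldots, d_p(\theta_p) > \lambda_p) = h(\bm{\lambda})$, so the conditional prior is $\pi(\bm{\theta} \mid \bm{\lambda}) = \pi^L(\bm{\theta}) \prod_i \mbox{I}(d_i(\theta_i) > \lambda_i)/h(\bm{\lambda})$. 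To check that $\pi(\bm{\lambda}) \propto h(\bm{\lambda})$ is a proper density I would apply Tonelli once more:
\begin{align*}
\int_{(\real^+)^p} h(\bm{\lambda})\, d\bm{\lambda} = E_u \left[ \int_{(\real^+)^p} \prod_{i=1}^p \mbox{I}(d_i(\theta_i) > \lambda_i)\, d\bm{\lambda} \right] = E_u \left[ \prod_{i=1}^p d_i(\theta_i) \right],
\end{align*}
which is finite precisely by the hypothesis $\int h(\bm{\lambda})\, d\bm{\lambda} < \infty$. Hence $\pi(\bm{\lambda}) = h(\bm{\lambda})/E_u\!\left(\prod_i d_i(\theta_i)\right)$, the direct analogue of $\pi(\lambda) = h(\lambda)/E_u(d(\bm{\theta}))$ in Proposition \ref{prop3}.

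It then only remains to reassemble the mixture. Forming $\pi(\bm{\theta} \mid \bm{\lambda})\,\pi(\bm{\lambda})$ cancels the common factor $h(\bm{\lambda})$, and integrating over $\bm{\lambda}$ returns $\pi^L(\bm{\theta}) \prod_i d_i(\theta_i)/E_u\!\left(\prod_i d_i(\theta_i)\right)$ by the first displayed identity, which is $\pi(\bm{\theta})$; the same computation shows this integrates to one, so the proportionality constants match. I do not expect a genuine obstacle here: the nonnegativity of every integrand removes any convergence question in the Tonelli interchanges, and the assumption $\int h < \infty$ is exactly what makes $\pi(\bm{\lambda})$ proper. The one point meriting a sentence of care is that $\pi(\bm{\theta} \mid \bm{\lambda})$ is undefined wherever $h(\bm{\lambda}) = 0$; since such $\bm{\lambda}$ carry zero mass under $\pi(\bm{\lambda}) \propto h(\bm{\lambda})$, this set is irrelevant to the mixture and the argument goes through on the support of $\pi(\bm{\lambda})$.
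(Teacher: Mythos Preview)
Your proposal is correct and follows essentially the same route as the paper. The paper's proof simply writes the marginal $\int \pi(\bm{\theta}\mid\bm{\lambda})\pi(\bm{\lambda})\,d\bm{\lambda}$, cancels $h(\bm{\lambda})$ against $\pi(\bm{\lambda})\propto h(\bm{\lambda})$, and integrates over the rectangle $\prod_i[0,d_i(\theta_i)]$ to recover $\pi^L(\bm{\theta})\prod_i d_i(\theta_i)$; you carry out the identical computation in the reverse direction via the layer-cake identity, and your added remarks on Tonelli, the identity $\int h = E_u(\prod_i d_i(\theta_i))$, and the null set where $h(\bm{\lambda})=0$ make the argument more explicit than the paper's one-line version.
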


The advantage of Corollary \ref{cor4} is that,
in spite of introducing additional latent variables, it greatly facilitates sampling.
The condition that $\int_{}^{}\!\, h(\bm{\lambda}) d\bm{\lambda}< \infty$
is guaranteed when $\pi^L(\bm{\theta})$ has independent components
(apply Proposition \ref{prop3} to each univariate marginal).

\begin{example}
The pMOM prior with
$d(\bm{\theta})= \prod_{i=1}^{p} \theta_{i}^2$, $\pi^L(\bm{\theta})= N(\bm{\theta}; {\bf 0}, \tau \mbox{I})$
can be represented as
$\pi(\bm{\theta} \mid \lambda) \propto N(\bm{\theta}; {\bf 0}, \tau \mbox{I}) 
\mbox{I}\left( \prod_{i=1}^{p} \theta_i^2 > \lambda \right) $
and
 $$\pi(\lambda) = \frac{P(\prod_{i=1}^{p} \theta_{i}^2/\tau > \lambda/\tau^p)}{E_u \left( \prod_{i=1}^{p} \theta_i ^2 \right)} 
= \frac{h(\lambda/\tau^p)}{\tau^p},$$
where $h(\cdot)$ is the survival function for a product of independent
chi-square random variables with 1 degree of freedom \cite{springer:1970}.
Prior draws are obtained by
\begin{enumerate}
\item Draw $u \sim \mbox{Unif}(0,1)$. Set $\lambda = P^{-1}(u)$, where $P(u)= P_{\pi}(\lambda \leq u)$ is the cdf associated to $\pi(\lambda)$.
\item Draw $\bm{\theta} \sim N({\bf 0}, \tau I) \mbox{I} \left( d(\bm{\theta}) > \lambda \right)$.
\end{enumerate}
As drawbacks, $P(u)$ requires Meijer G-functions and is cumbersome to evaluate for large $p$
and sampling from a multivariate Normal with truncation region $\prod_{i=1}^{p} \theta_i^2 > \lambda$ is non-trivial.

\begin{figure}
\begin{center}
\begin{tabular}{cc}
\includegraphics[width=0.5\textwidth]{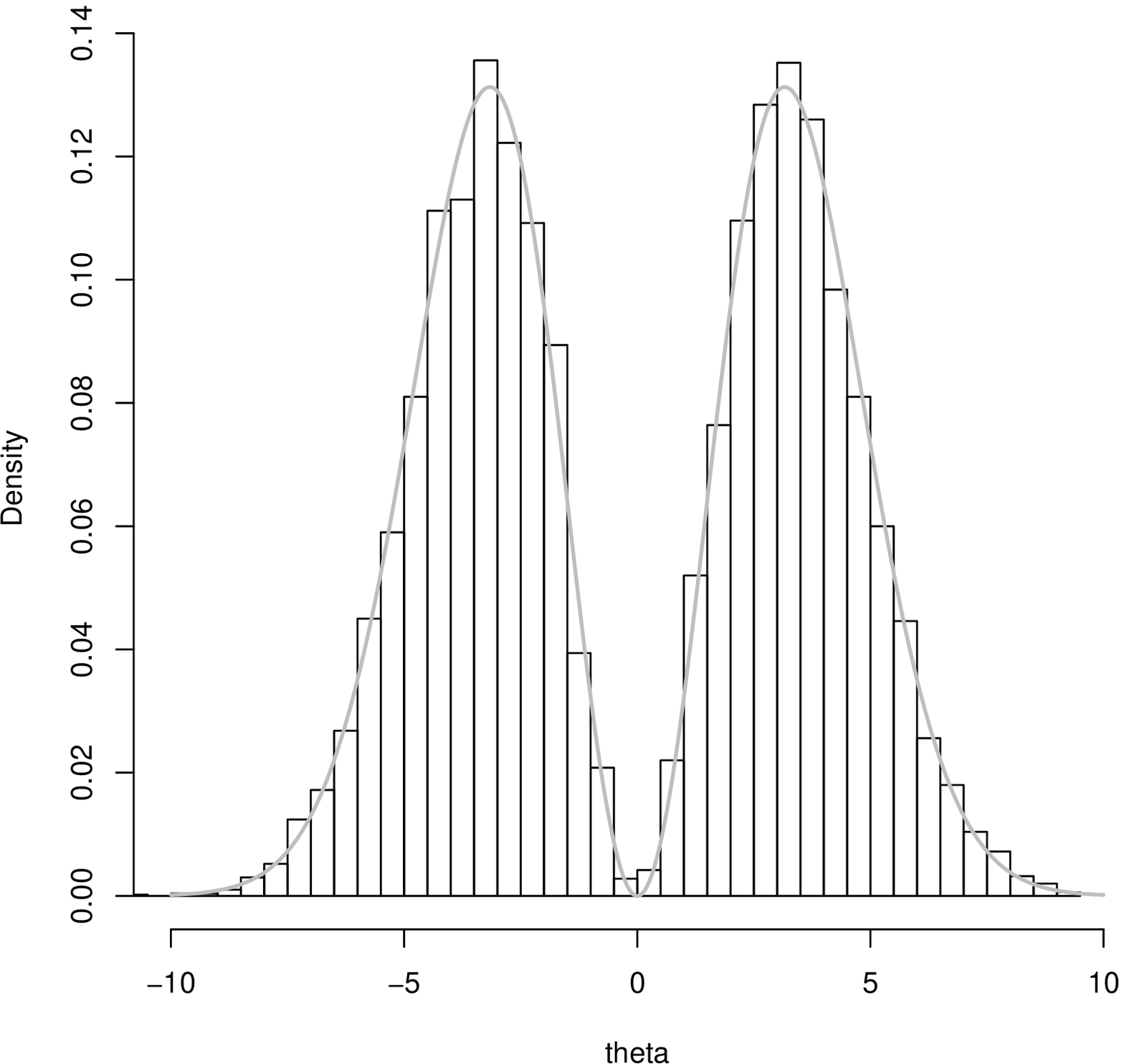} &
\includegraphics[width=0.5\textwidth]{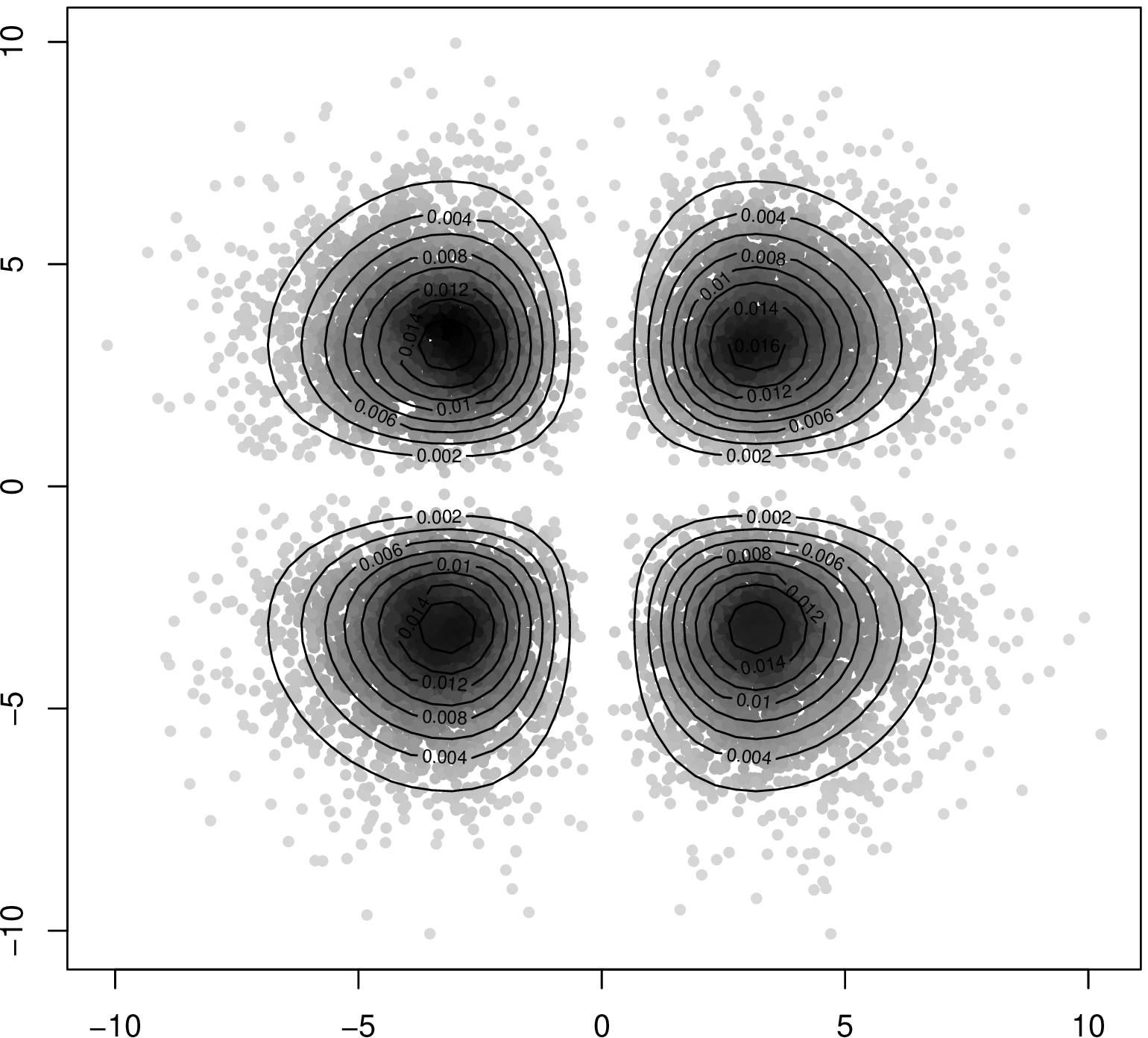} \\
\end{tabular}
\end{center}
\caption[10,000 independent MOM prior draws ($\tau=5$)]{10,000 independent univariate (left) and bivariate (right) pMOM prior draws ($\tau=5$). 
Lines indicate true density.}
\label{fig:sim_momprior}
\end{figure}

Corollary \ref{cor4} gives an alternative.
Let $P(u)=P(\lambda < u)$ be the cdf associated to $\pi(\lambda)=\frac{h(\lambda/\tau)}{\tau}$ 
where $h(\cdot)$ is the survival of a $\chi_1^2$ distribution.
For $i=1,\ldots,p$, draw $u_i \sim \mbox{Unif}(0,1)$, set $\lambda_i = P^{-1}(u_i)$ and
draw $\theta_i \sim N(0,\tau) \mbox{I}(\theta_i > |\lambda_i|)$.
The function $P^{-1}(\cdot)$ can be tabulated and quickly evaluated, rendering efficient computations.
Figure \ref{fig:sim_momprior} shows 100,000 draws from 
univariate (left) and bivariate (right) pMOM priors with $\tau=5$.

\end{example}

\subsection{Deriving NLP properties for a given mixture}
\label{ssec:nlp_properties}

We establish how two important characteristics of a NLP functional form,
the penalty $d(\bm{\theta})$ and its tail behavior, depend on a given truncation scheme.
It is necessary to distinguish whether a single or multiple truncation variables are used.

\begin{prop}
Let $\pi(\bm{\theta})$ be the marginal NLP for
$\pi(\bm{\theta},\lambda) = \frac{\pi^L(\bm{\theta})}{h(\lambda)} 
\big( \prod_{i=1}^{p} \mbox{I}(d( \theta_i ) > \lambda) \big) \pi(\lambda)$,
where $h(\lambda)= P_u(d( \theta_1 ) > \lambda, \ldots, d( \theta_p ) > \lambda)$,
$\pi(\lambda)$ is absolutely continuous and $\lambda \in \real^+$.
Denote $d_{min}(\btheta) = \mbox{min} \{ d(\theta_1), \ldots, d(\theta_p) \} $.

\begin{enumerate}[(i)]
\item Consider any sequence $\{ \btheta^{(m)} \}_{m \geq 1}$ such that $\mathop {\lim }\limits_{m \to \infty } d_{min}(\btheta^{(m)})=0$.
Then
$$\mathop {\lim }\limits_{m \to \infty } \frac{\pi(\bm{\theta}^{(m)})}{\pi^L(\btheta^{(m)}) d_{min}(\btheta^{(m)}) \pi(\lambda^{(m)})}=1,$$
for some $\lambda^{(m)} \in (0, d_{min}(\btheta^{(m)}))$.
If $\pi(\lambda)= c h(\lambda)$ then $\mathop {\lim }\limits_{m \to \infty } \pi(\lambda^{(m)})=c \in (0,\infty)$.

\item Let $\{ \btheta^{(m)} \}_{m \geq 1}$ be any sequence such that $\mathop {\lim }\limits_{m \to \infty } d(\bm{\theta}^{(m)})= \infty$.
Then
$\mathop {\lim }\limits_{m \to \infty } \pi(\bm{\theta}^{(m)})/\pi^L(\bm{\theta}^{(m)})=c$ where $c>0$ is either a positive constant or $\infty$.
In particular, if $\int_{}^{}\!\, \frac{\pi(\lambda)}{h(\lambda)} d \lambda < \infty$
then $c<\infty$.
\end{enumerate}
\label{prop5}
\end{prop}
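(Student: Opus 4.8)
The plan is to reduce the whole statement to a single scalar identity for the marginal prior and then study that identity near $0$ and near $\infty$. First I would marginalize $\lambda$ out of the joint density. Since
$$\pi(\bm{\theta},\lambda) = \frac{\pi^L(\bm{\theta})}{h(\lambda)}\Big(\prod_{i=1}^{p}\mathrm{I}(d(\theta_i)>\lambda)\Big)\pi(\lambda),$$
and the product of indicators equals $\mathrm{I}(d_{\min}(\bm{\theta})>\lambda)$, the $\lambda$-integral runs only over $(0,d_{\min}(\bm{\theta}))$, yielding the central identity
$$\pi(\bm{\theta}) = \pi^L(\bm{\theta})\int_0^{d_{\min}(\bm{\theta})}\frac{\pi(\lambda)}{h(\lambda)}\,d\lambda.$$
(I would note in passing that integrating the joint over $\bm{\theta}$ at fixed $\lambda$ returns $\pi(\lambda)$, confirming the normalization.) Everything then reduces to the behaviour of $G(x)=\int_0^x \pi(\lambda)/h(\lambda)\,d\lambda$ near $x=0$ for part (i) and as $x\to\infty$ for part (ii), with $x=d_{\min}(\bm{\theta})$.

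For part (i) I would apply the mean value theorem for integrals to the continuous integrand $\pi(\lambda)/h(\lambda)$ on $[0,d_{\min}(\bm{\theta}^{(m)})]$: there exists $\lambda^{(m)}\in(0,d_{\min}(\bm{\theta}^{(m)}))$ with $G(d_{\min}(\bm{\theta}^{(m)}))=d_{\min}(\bm{\theta}^{(m)})\,\pi(\lambda^{(m)})/h(\lambda^{(m)})$. Substituting into the identity, the target ratio collapses to $1/h(\lambda^{(m)})$. Since $d_{\min}(\bm{\theta}^{(m)})\to 0$ forces $\lambda^{(m)}\to 0$, it remains to show $h(0^+)=1$. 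This is where the local-prior structure enters: $h(\lambda)=P_u(d_{\min}(\bm{\theta})>\lambda)$ increases to $P_u(d(\theta_i)>0\ \forall i)$ as $\lambda\downarrow 0$, and because $\pi^L$ is absolutely continuous the sub-model boundary $\{d(\theta_i)=0\}$ carries no mass, so this probability equals $1$. Hence $1/h(\lambda^{(m)})\to 1$, proving the first claim; in the special case $\pi(\lambda)=c\,h(\lambda)$ one then reads off $\pi(\lambda^{(m)})=c\,h(\lambda^{(m)})\to c\in(0,\infty)$ immediately.

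For part (ii), with $d_{\min}(\bm{\theta}^{(m)})\to\infty$ I would invoke monotone convergence on the nonnegative integrand to obtain $\pi(\bm{\theta}^{(m)})/\pi^L(\bm{\theta}^{(m)})=G(d_{\min}(\bm{\theta}^{(m)}))\to\int_0^\infty \pi(\lambda)/h(\lambda)\,d\lambda=:c$. Positivity of the integrand gives $c>0$, and $c$ is finite precisely when that improper integral converges, which is the stated ``in particular'' clause; otherwise $c=+\infty$.

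The main obstacle I anticipate is not the analysis itself but pinning down the regularity that makes the two limiting steps clean: the continuity of $\pi(\lambda)/h(\lambda)$ needed by the mean value theorem (continuity of the survival function $h$ follows from $d_{\min}(\bm{\theta})$ having no atom under $\pi^L$, and $\pi(\lambda)$ is taken continuous, as in the Gamma/Inverse-Gamma examples), and the evaluation $h(0^+)=1$, which is the only place where absolute continuity of $\pi^L$ is genuinely used. I would also flag that the hypothesis written as ``$d(\bm{\theta}^{(m)})\to\infty$'' in (ii) must be read as $d_{\min}(\bm{\theta}^{(m)})\to\infty$, since only $d_{\min}$ survives in the marginal through the collapsed product of indicators.
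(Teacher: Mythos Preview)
Your proof is correct and follows essentially the same route as the paper: both arguments derive the scalar identity $\pi(\bm{\theta})=\pi^L(\bm{\theta})\int_0^{d_{\min}(\bm{\theta})}\pi(\lambda)/h(\lambda)\,d\lambda$, then use the Mean Value Theorem near $0$ for part (i) and monotone convergence near $\infty$ for part (ii). The only cosmetic difference is where the MVT is applied: the paper factors the integral as $P_\lambda(d_{\min})\cdot E\!\left[h(\lambda)^{-1}\mid\lambda<d_{\min}\right]$, applies MVT to the CDF $P_\lambda$ to extract $d_{\min}\,\pi(\lambda^{(m)})$, and argues separately that the conditional expectation tends to $1/h(0)=1$; you instead apply MVT directly to the full integrand $\pi(\lambda)/h(\lambda)$ so that the target ratio collapses immediately to $1/h(\lambda^{(m)})\to 1$. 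Your version is slightly more economical, and your remark that the hypothesis in (ii) must be read as $d_{\min}(\bm{\theta}^{(m)})\to\infty$ is a useful clarification the paper leaves implicit.
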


Property (i) is important as asymptotic Bayes factor rates depend on the form of the penalty,
given by $d_{min}(\btheta) \pi^L(\btheta)$ and hence
only depending on the smallest $d(\theta_1),\ldots,d(\theta_p)$.
Property (ii) shows that $\pi(\bm{\theta})$ inherits its tail behavior from $\pi^L(\bm{\theta})$.
Corollary \ref{cor6} is an extension to multiple truncations.

\begin{cor}
Let $\pi(\bm{\theta})$ be the marginal NLP for
$\pi(\bm{\theta},\bm{\lambda}) = \frac{\pi^L(\bm{\theta})}{h(\bm{\lambda})}
\prod_{i=1}^{p} \mbox{I}(d_i( \theta_i ) > \lambda_i) \pi_i(\lambda_i)$,
where $h(\bm{\lambda})= P_u\left( d_1(\theta_1) > \lambda_1, \ldots, d_p(\theta_p) > \lambda_p \right)$
under $\pi^L(\bm{\theta})$ and $\pi(\bm{\lambda})$ is absolutely continuous.

\begin{enumerate}[(i)]
\item Let $\{ \bm{\theta}^{(m)} \}_{m\geq 1}$ such that $\mathop {\lim }\limits_{m \to \infty } d_i(\theta_i^{(m)})=0$ for $i=1,\ldots,p$.
Then for some $\lambda_i^{(m)} \in (0, d( \theta_i ))$,
$\mathop {\lim }\limits_{m \to \infty } \pi(\bm{\theta}^{(m)})/\left(\pi^L(\bm{\theta}^{(m)}) \pi(\bm{\lambda}^{(m)}) \prod_{i=1}^{p} d_i(\theta_i^{(m)}) \right) =1$.

\item Let $\{ \btheta^{(m)} \}_{m\geq 1}$ such that $\mathop {\lim }\limits_{m \to \infty } d_i(\theta_i^{(m)})= \infty$ for $i=1,\ldots,p$.
Then
$\mathop {\lim }\limits_{m \to \infty } \pi(\bm{\theta}^{(m)})/\pi^L(\bm{\theta}^{(m)})=c$ where $c>0$ is either a positive constant or $\infty$.
In particular, if $E \left( h(\bm{\lambda})^{-1} \right) < \infty$ under the prior on $\bm{\lambda}$, then $c<\infty$.
\end{enumerate}
\label{cor6}
\end{cor}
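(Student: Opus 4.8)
The plan is to integrate out $\bm{\lambda}$ and reduce everything to the behaviour of
$$
G(\bm{\theta}) = \int_{0}^{d_1(\theta_1)} \!\!\cdots \int_{0}^{d_p(\theta_p)} \frac{\prod_{i=1}^{p} \pi_i(\lambda_i)}{h(\bm{\lambda})}\, d\bm{\lambda},
$$
so that $\pi(\bm{\theta}) = \pi^L(\bm{\theta}) G(\bm{\theta})$. This holds because each $\mbox{I}(d_i(\theta_i) > \lambda_i)$ restricts $\lambda_i$ to $(0, d_i(\theta_i))$ while $\pi^L(\bm{\theta})$ pulls out of the $\bm{\lambda}$-integral. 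The two facts that drive both parts are that $h(\bm{\lambda}) \leq 1$ is a probability and that $h(\bm{0}) = P_u(d_i(\theta_i) > 0 \text{ for all } i) = 1$, the latter because $\pi^L$ is absolutely continuous and $\{d_i(\theta_i)=0\}$ has Lebesgue measure zero; continuity of $h$ at $\bm{0}$ follows from absolute continuity of $\pi^L$ together with continuity of the $d_i$.

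For (i), as every $d_i(\theta_i^{(m)}) \to 0$ the integration box shrinks to $\bm{0}$, where the integrand is continuous and $h$ is bounded away from $0$. I would apply the mean value theorem for integrals on the box $\prod_i (0, d_i(\theta_i^{(m)}))$, whose volume is $\prod_i d_i(\theta_i^{(m)})$, to obtain a point $\bm{\lambda}^{(m)}$ in the box, hence with each $\lambda_i^{(m)} \in (0, d_i(\theta_i^{(m)}))$, satisfying
$$
G(\bm{\theta}^{(m)}) = \frac{\prod_{i=1}^{p} \pi_i(\lambda_i^{(m)})}{h(\bm{\lambda}^{(m)})} \prod_{i=1}^{p} d_i(\theta_i^{(m)}) = \frac{\pi(\bm{\lambda}^{(m)})}{h(\bm{\lambda}^{(m)})} \prod_{i=1}^{p} d_i(\theta_i^{(m)}).
$$
Since $\bm{\lambda}^{(m)} \to \bm{0}$ forces $h(\bm{\lambda}^{(m)}) \to h(\bm{0}) = 1$, dividing $\pi(\bm{\theta}^{(m)}) = \pi^L(\bm{\theta}^{(m)}) G(\bm{\theta}^{(m)})$ by $\pi^L(\bm{\theta}^{(m)}) \pi(\bm{\lambda}^{(m)}) \prod_i d_i(\theta_i^{(m)})$ leaves exactly $1/h(\bm{\lambda}^{(m)}) \to 1$, which is the claim.

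For (ii), as every $d_i(\theta_i^{(m)}) \to \infty$ the box expands to the whole positive orthant, so monotone convergence gives $G(\bm{\theta}^{(m)}) \to \int_{0}^{\infty}\!\!\cdots\int_{0}^{\infty} \pi(\bm{\lambda})/h(\bm{\lambda})\, d\bm{\lambda} = E\!\left(h(\bm{\lambda})^{-1}\right) =: c$, whence $\pi(\bm{\theta}^{(m)})/\pi^L(\bm{\theta}^{(m)}) = G(\bm{\theta}^{(m)}) \to c$. Because $h \leq 1$ gives $h(\bm{\lambda})^{-1} \geq 1$, we have $c \geq \int \pi(\bm{\lambda})\, d\bm{\lambda} = 1 > 0$, and $c < \infty$ precisely when $E\!\left(h(\bm{\lambda})^{-1}\right) < \infty$, as stated. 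The argument is the direct analogue of Proposition \ref{prop5}, the only structural change being that independent per-coordinate truncations replace the single penalty $d_{min}(\bm{\theta})$ by the product $\prod_i d_i(\theta_i)$, which is exactly the product structure responsible for the marginal independence noted after Corollary \ref{cor2}. The step I expect to be most delicate is the mean value theorem in (i): it requires the integrand to be continuous on the shrinking box and $h$ to remain bounded away from $0$ there. The latter is immediate from $h(\bm{0})=1$ and continuity of $h$; the former holds when the $\pi_i$ are continuous, and if they are only absolutely continuous I would instead invoke the Lebesgue differentiation theorem to identify the limiting value of the box-average with the integrand at a common interior point $\bm{\lambda}^{(m)}$.
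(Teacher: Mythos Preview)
Your proof is correct and follows essentially the same route as the paper's: both write $\pi(\bm{\theta})=\pi^L(\bm{\theta})\,G(\bm{\theta})$ with $G$ the box integral of $\pi(\bm{\lambda})/h(\bm{\lambda})$, use $h(\bm{0})=1$ together with a mean value argument for~(i), and monotone convergence for~(ii). The only cosmetic difference is that the paper first factors $G(\bm{\theta})=\prod_i P_{\lambda_i}\!\bigl(d_i(\theta_i)\bigr)\cdot E\bigl(h(\bm{\lambda})^{-1}\mid \lambda_j<d_j(\theta_j)\ \forall j\bigr)$ and then applies the univariate mean value theorem to each cumulative distribution function $P_{\lambda_i}$, whereas you apply the multivariate integral mean value theorem directly to the whole box; both variants need continuity of the $\pi_i$, so neither actually sidesteps the delicacy you flagged.
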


That is, multiple independent truncation variables give a multiplicative penalty $\prod_{i=1}^{p} d_i(\theta_i)$
and tails are at least as thick as those of $\pi^L(\bm{\theta})$.
Once a functional form for $\pi(\btheta)$ is chosen, we need to set its parameters.
Although the asymptotic rates (Section \ref{sec:asymptotics}) hold for any fixed parameters,
their value can be relevant in finite samples.
Given that posterior inference depends solely on the marginal prior $\pi(\bm{\theta})$,
whenever possible we recommend eliciting $\pi(\bm{\theta})$ directly.
For instance, \citeasnoun{johnson:2010} defined practical significance in linear regression as signal-to-noise ratios $|\theta_i|/\sqrt{\phi}>0.2$,
and gave default $\tau$ assigning $P(|\theta_i|/\sqrt{\phi}>0.2)=0.99$.
\citeasnoun{rossell:2013b} found analogous $\tau$ for probit regression,
and also considered learning $\tau$ either via a hyper-prior or minimizing posterior predictive loss \cite{gelfand:1998}.
\citeasnoun{consonni:2010} devised objective Bayes strategies.
Yet another possibility is to match the unit information prior {\it e.g.} setting $V(\theta_i/\sqrt{\phi})=1$,
which can be regarded as minimally informative (in fact $V(\theta_i/\sqrt{\phi})=1.074$ for the MOM default $\tau=0.358$).
When $\pi(\bm{\theta})$ is not in closed-form prior elicitation depends both on $\tau$ and $\pi(\lambda)$,
but prior draws can be used to estimate $P(|\theta_i|/\sqrt{\phi}>t)$ for some $t$ or $V(\theta_i/\sqrt{\phi})$.
An analytical alternative is to set $\pi(\lambda)$ so that $E(\lambda)=d(\theta_i,\phi)$ when $\theta_i/\sqrt{\phi}=t$,
{\it i.e.} $E(\lambda)$ matches a practical relevance threshold.
For instance, for $t=0.2$  and $\pi(\lambda) \sim \mbox{IG}(a,b)$ under the MOM prior we would set
$E(\lambda)=b/(a-1)=0.2^2/\tau$, and under the eMOM prior $b/(a-1)=e^{\sqrt{2}-\tau/0.2^2}$.
Both expressions illustrate the dependence between $\tau$ and $\pi(\lambda)$.
Here we use default $\tau$ (Section \ref{sec:examples}), but as discussed other strategies are possible.

\section{Posterior sampling}
\label{sec:postsampling}

We use the latent truncation characterization to derive posterior sampling algorithms, and show how the truncation mixture
in Proposition \ref{prop3} and Corollary \ref{cor4} leads to simplifications. Section \ref{ssec:generalgibbs} provides
two Gibbs algorithms to sample from arbitrary posteriors, and Section \ref{ssec:gibbslm} adapts them to linear models.
Sampling is conditional on a given $M_k$, hence we drop $M_k$ to keep notation simple.

\subsection{General algorithm}
\label{ssec:generalgibbs}

First consider a NLP defined by a single latent truncation, {\it i.e.}
$\pi(\bm{\theta} \mid \lambda) = \pi^L(\bm{\theta}) \mbox{I}(d( \bm{\theta} ) > \lambda) / h(\lambda)$,
where $h(\lambda)= P_u(d( \bm{\theta} ) > \lambda)$
and $\pi(\lambda)$ is a prior on $\lambda \in \real^+$.
The joint posterior is
\begin{align}
\pi(\bm{\theta},\lambda \mid {\bf y}) \propto f({\bf y} \mid \bm{\theta}) \frac{\pi^L(\bm{\theta}) \mbox{I}(d( \bm{\theta} ) > \lambda)}{h(\lambda)} \pi(\lambda).
\label{eq:jointpost_singletrunc}
\end{align}
 
Sampling from $\pi(\bm{\theta} \mid {\bf y})$ directly is challenging
as it is highly multi-modal, but
straightforward algebra gives the following
$k^{th}$ Gibbs iteration to sample from $\pi(\bm{\theta}, \lambda \mid {\bf y})$.

\vspace{2mm}
{\bf Algorithm 1. Gibbs sampling with a single truncation}

\begin{enumerate}
\item Draw $\lambda^{(k)} \sim \pi(\lambda \mid {\bf y}, \bm{\theta}^{(k-1)}) \propto \mbox{I}(d(\bm{\theta}) > \lambda) \pi(\lambda)/h(\lambda)$.
When $\pi(\lambda) \propto h(\lambda)$ as in Proposition \ref{prop3}, $\lambda^{(k)} \sim \mbox{Unif}(0,d(\bm{\theta}^{(k-1)}))$.
\item Draw $\bm{\theta}^{(k)} \sim \pi(\bm{\theta} \mid {\bf y}, \lambda^{(k)}) \propto \pi^L(\bm{\theta} \mid {\bf y}) \mbox{I}(d(\bm{\theta}) > \lambda^{(k)})$.
\end{enumerate}

That is, $\lambda^{(k)}$ is sampled from a univariate distribution
that reduces to a uniform when setting $\pi(\lambda) \propto h(\lambda)$,
and $\bm{\theta}^{(k)}$ from a truncated version of $\pi^L(\cdot)$.
For instance, $\pi^L(\cdot)$ may be a LP that allows easy
posterior sampling.
As a difficulty, the truncation region $\{ \bm{\theta}: d(\bm{\theta}) >
\lambda^{(k)} \}$ is non-linear
and non-convex
so that jointly sampling $\bm{\theta}=(\theta_1,\ldots,\theta_p)$ may be challenging.
One may apply a Gibbs step to each element in $\theta_1,\ldots,\theta_p$ sequentially,
which only requires univariate truncated draws from $\pi^L(\cdot)$,
but the mixing of the chain may suffer.

The multiple truncation representation in Corollary \ref{cor4}
provides a convenient alternative.
Consider $\pi(\bm{\theta} \mid \bm{\lambda}) = \pi^L(\bm{\theta})
\prod_{i=1}^{p} \mbox{I}(d_i( \theta_i ) > \lambda_i) \pi(\bm{\lambda}) / h(\bm{\lambda})$,
where $h(\bm{\lambda})= P_u( d_1(\theta_1) > \lambda_1, \ldots d_p(\theta_p) >\lambda_p)$.
The following steps define the $k^{th}$ Gibbs iteration:

\vspace{2mm}
{\bf Algorithm 2. Gibbs sampling with multiple truncations}

\begin{enumerate}
\item Draw $\bm{\lambda}^{(k)} \sim \pi(\bm{\lambda} \mid {\bf y},
  \bm{\theta}^{(k-1)}) = 
\prod_{i=1}^{p} \mbox{Unif}(\lambda_i; 0,d_i(\theta_i)) \frac{\pi(\bm{\lambda})}{h(\bm{\lambda})}$.
If $\pi(\bm{\lambda}) \propto h(\bm{\lambda})$ as in Corollary \ref{cor4}, 
$\lambda_i^{(k)} \sim \mbox{Unif}(0,d_i(\theta_i))$.

\item Draw $\bm{\theta}^{(k)} \sim \pi(\bm{\theta} \mid {\bf y}, \bm{\lambda}^{(k)}) \propto \pi^L(\bm{\theta} \mid {\bf y}) \prod_{i=1}^{p} \mbox{I}(d_i(\theta_i) > \lambda_i^{(k)})$
\end{enumerate}

Now the truncation region in Step 2 is  defined by hyper-rectangles,
which facilitates sampling.
As in Algorithm 1, by setting the prior conveniently
Step 1 avoids evaluating
$\pi(\bm{\lambda})$ and $h(\bm{\lambda})$.

\subsection{ Linear models }
\label{ssec:gibbslm}

We adapt Algorithm 2 to a linear regression
${\bf y} \sim N(X \bm{\theta}, \phi I)$ with unknown variance $\phi$
and the three priors in (\ref{eq:pmom})-(\ref{eq:pemom}).
We set the prior $\phi \sim \mbox{IG}(a_{\phi}/2,b_{\phi}/2)$
and let $\tau$ be a user-specified prior dispersion.
To set a hyper-prior on $\tau$ see \citeasnoun{rossell:2013b}.

For all three priors, Step 2 in Algorithm 2 
samples from a multivariate Normal with rectangular truncation around
${\bf 0}$,
for which we developed an efficient algorithm.
\citeasnoun{kotecha:1999} and \citeasnoun{rodriguezyam:2004} proposed Gibbs after orthogonalization
strategies that result in low serial correlation,
which \citeasnoun{wilhelm:2010} implemented in the R package \texttt{tmvtnorm}
for restrictions $l \leq \theta_i \leq u$.
Here we require sampling under $d_i( \theta_i ) \geq l$, a non-convex region.
Our adapted algorithm is in Appendix \ref{app:alg_tnorm_sample}
and implemented in R package \texttt{mombf}.
An important property is that the algorithm produces independent samples when 
the posterior probability of the truncation region becomes negligible.
Since NLPs only assign high posterior probability to a model
when the posterior for non-zero coefficients is well shifted from the origin,
the truncation region is indeed often negligible.
We outline the algorithm separately for each prior.

\subsubsection{\underline{pMOM prior.}}
\label{sssec:lmmom}

Straightforward algebra gives the full conditional posteriors
\begin{align}
\pi(\bm{\theta} \mid \phi, {\bf y}) \propto \left( \prod_{i=1}^{p} \theta_i^2 \right) N(\bm{\theta}; {\bf m}, \phi S^{-1}) \nonumber \\
\pi(\phi \mid \bm{\theta}, {\bf y})=\mbox{IG}\left(\frac{a_{\phi}+n+3p}{2},\frac{b_{\phi}+s_R^2+\bm{\theta}'\bm{\theta}/\tau}{2}\right),
\label{eq:pmom_post}
\end{align}
where $S=X'X+\tau^{-1} I$, ${\bf m}= S^{-1} X' {\bf y}$
and $s_R^2=({\bf y}-X \bm{\theta})'({\bf y}-X \bm{\theta})$ is the sum of squared residuals.
Corollary \ref{cor4} 
represents the pMOM prior in (\ref{eq:pmom}) as
\begin{align}
\pi(\bm{\theta} \mid \phi, \bm{\lambda}) = N(\bm{\theta};{\bf 0},\tau \phi I) \prod_{i=1}^{p} \mbox{I} \left( \frac{\theta_i^2}{\tau \phi} > \lambda_i \right) \frac{1}{h(\lambda_i)}
\label{eq:pmom_latent}
\end{align}
marginalized with respect to 
$\pi(\lambda_i) = h(\lambda_i)= P\left( \frac{\theta_i^2}{\tau \phi} > \lambda_i \mid \phi \right)$,
where $h(\cdot)$ is the survival of a chi-square with 1 degree of freedom.
Algorithm 2 and 
simple algebra give the $k^{th}$ Gibbs iteration
\begin{enumerate}
\item $\phi^{(k)} \sim \mbox{IG}(\frac{a_{\phi}+n+3p}{2},\frac{b_{\phi}+s_R^2+(\bm{\theta}^{(k-1)})'\bm{\theta}^{(k-1)}/\tau}{2})$
\item $\bm{\lambda}^{(k)} \sim \pi(\bm{\lambda} \mid \bm{\theta}^{(k-1)}, \phi^{(k)},{\bf y})= \prod_{i=1}^{p} \mbox{I} \left( \frac{(\theta_i^{(k-1)})^2}{\tau \phi^{(k)}} > \lambda_i \right)$
\item $\bm{\theta}^{(k)} \sim \pi(\bm{\theta} \mid \bm{\lambda}^{(k)}, \phi^{(k)}, {\bf y})= 
N(\bm{\theta}; {\bf m}, \phi^{(k)} S^{-1}) \prod_{i=1}^{p} \mbox{I} \left( \frac{\theta_i^2}{\tau \phi^{(k)}} > \lambda_i \right)$.
\end{enumerate}
Step 1 samples unconditionally on $\bm{\lambda}$, so that no efficiency is lost for introducing these latent variables.
Step 3 requires truncated multivariate Normal draws.

\subsubsection{\underline{piMOM prior.}}
\label{sssec:lmimom}

We assume $\mbox{dim}(\Theta)<n$.
The full conditional posteriors are
\begin{align}
\pi(\bm{\theta} \mid \phi, {\bf y}) \propto \left( \prod_{i=1}^{p} \frac{\sqrt{\tau \phi}}{\theta_i^2} e^{-\frac{\tau \phi}{\theta_i^2}} \right) N(\bm{\theta}; {\bf m}, \phi S^{-1}) \nonumber \\
\pi(\phi \mid \bm{\theta}, {\bf y})= e^{-\tau\phi \sum_{i=1}^{p} \theta_i^{-2} } \mbox{IG}\left(\phi; \frac{a_{\phi}+n-p}{2},\frac{b_{\phi}+s_R^2}{2}\right),
\label{eq:pimom_post}
\end{align}
where $S=X'X$, ${\bf m}= S^{-1} X' {\bf y}$
and $s_R^2=({\bf y}-X \bm{\theta})'({\bf y}-X \bm{\theta})$.
Now, the piMOM prior is $\pi_I(\bm{\theta} \mid \phi) = $
\begin{align}
N(\bm{\theta}; {\bf 0}; \tau_N \phi \mbox{I}) \prod_{i=1}^{p}
\frac{ \frac{\sqrt{\tau \phi}}{\sqrt{\pi} \theta_i^2} 
e^{ - \frac{\phi \tau}{\theta_i^2} }}
{N(\theta_i; 0, \tau_N \phi)}=
N(\bm{\theta}; {\bf 0}; \tau_N \phi \mbox{I}) \prod_{i=1}^{p} d_i(\theta_i,\phi).
\label{eq:pimompen}
\end{align}
In principle any $\tau_N$ may be used, but $\tau_N \geq 2 \tau$ guarantees
$d(\theta_i,\phi)$ to be monotone increasing in $\theta_i^2$,
so that its inverse exists
(Appendix \ref{app:imompen}).
By default we set $\tau_N=2 \tau$.
Corollary \ref{cor4} gives
\begin{align}
\pi(\bm{\theta} \mid \phi, \bm{\lambda})= 
N(\bm{\theta}; {\bf 0}, \tau_N \phi \mbox{I}) \prod_{i=1}^{p} \mbox{I}( d(\theta_i,\phi) > \lambda_i ) \frac{1}{h(\lambda_i)}
\label{eq:pimom_latent}
\end{align}
and $\pi(\bm{\lambda})=\prod_{i=1}^{p} h(\lambda_i)$,
where $h(\lambda_i)=P(d(\theta_i,\phi) > \lambda_i)$
which we need not evaluate.
Algorithm 2 gives
the following MH within Gibbs procedure.
\begin{enumerate}
\item MH step
\begin{enumerate}
\item Propose $\phi^* \sim \mbox{IG}\left(\phi; \frac{a_{\phi}+n-p}{2},\frac{b_{\phi}+s_R^2}{2}\right)$
\item Set $\phi^{(k)}=\phi^*$ with probability $\mbox{min} \left\{ 1, e^{(\phi^{(k-1)}-\phi^*) \tau \sum_{i=1}^{p} \theta_i^{-2}}  \right\}$, else $\phi^{(k)}=\phi^{(k-1)}$.
\end{enumerate}

\item $\bm{\lambda}^{(k)} \sim 
\prod_{i=1}^{p} \mbox{Unif} \left(\lambda_i; 0, d(\theta_i^{(k-1)}, \phi^{(k)}) \right)$

\item $\bm{\theta}^{(k)} \sim 
N(\bm{\theta}; {\bf m}, \phi^{(k)} S^{-1}) \prod_{i=1}^{p} \mbox{I} \left(
  d(\theta_i, \phi^{(k)}) > \lambda_i^{(k)} \right)$.
\end{enumerate}
Step 3 requires the inverse $d^{-1}(\cdot)$,
which can be evaluated efficiently
combining an asymptotic approximation with a linear interpolation search
(Appendix \ref{app:imompen}).
As a token, 10,000 draws for $p=2$ variables required 0.58 seconds on a 2.8 GHz processor running OS X 10.6.8.

\subsubsection{\underline{peMOM prior.}}
\label{sssec:lmemom}

The full conditional posteriors are
\begin{align}
\pi(\bm{\theta} \mid \phi, {\bf y}) \propto \left( \prod_{i=1}^{p} e^{-\frac{\tau \phi}{\theta_i^2}} \right) N(\bm{\theta}; {\bf m}, \phi S^{-1});
\pi(\phi \mid \bm{\theta}, {\bf y}) \propto e^{- \sum_{i=1}^{p} \frac{\tau \phi}{\theta_i^2}}  \mbox{IG}\left(\phi; \frac{a^*}{2},\frac{b^*}{2}\right),
\label{eq:emom_post}
\end{align}
where 
$S=X'X+\tau^{-1} I$, ${\bf m}= S^{-1} X' {\bf y}$,
$a^*=a_{\phi}+n+p$,
$b^*=b_{\phi}+s_R^2+\bm{\theta}'\bm{\theta}/\tau$.
Corollary \ref{cor4} gives
\begin{align}
\pi(\bm{\theta} \mid \phi, \bm{\lambda}) = N(\bm{\theta};{\bf 0},\tau \phi I) \prod_{i=1}^{p} \mbox{I} \left( e^{\sqrt{2} -\frac{\tau \phi}{\theta_i^2}} > \lambda_i \right) \frac{1}{h(\lambda_i)}
\label{eq:pemom_latent}
\end{align}
and $\pi(\lambda_i) = h(\lambda_i)= P\left( e^{\sqrt{2} -\frac{\tau \phi}{\theta_i^2}} > \lambda_i \mid \phi \right)$.
Again $h(\lambda_i)$ has no simple form but is not required by Algorithm 2,
which gives the $k^{th}$ Gibbs iteration
\begin{enumerate}
\item $\phi^{(k)} \sim e^{- \sum_{i=1}^{p} \frac{\tau \phi}{\theta_i^2}}  \mbox{IG}\left(\phi; \frac{a^*}{2},\frac{b^*}{2}\right)$
\begin{enumerate}
\item Propose $\phi^* \sim \mbox{IG}\left(\phi; \frac{a^*}{2},\frac{b^*}{2}\right)$
\item Set $\phi^{(k)}=\phi^*$ with probability $\mbox{min} \left\{ 1, e^{(\phi^{(k-1)}-\phi^*) \tau \sum_{i=1}^{p} (\theta_i^{(k-1)})^{-2}}  \right\}$, else $\phi^{(k)}=\phi^{(k-1)}$.
\end{enumerate}

\item $\bm{\lambda}^{(k)} \sim \prod_{i=1}^{p} 
\mbox{Unif} \left(\lambda_i; 0, e^{\sqrt{2} -\tau \phi/(\theta_i^{(k-1)})^2} \right)$

\item $\bm{\theta}^{(k)} \sim  
N(\bm{\theta}; {\bf m}, \phi^{(k)} S^{-1}) \prod_{i=1}^{p} \mbox{I} \left( \theta_i^2 > \left| \frac{\phi \tau}{\mbox{log}(\lambda_i^{(k)})-\sqrt{2}}  \right| \right)$.
\end{enumerate}

\section{Examples}
\label{sec:examples}

We assess our posterior sampling algorithms (Section \ref{sec:postsampling})
and the use of NLPs for high-dimensional estimation.
Section \ref{sec:postdraw_onemodel} shows a simple yet illustrative multi-modal example.
Section \ref{ssec:highdim_ex} studies $p \geq n$ cases
and compares the BMA estimators induced by NLPs
with benchmark priors (BP, \citeasnoun{fernandez:2001}), 
hyper-g priors (HG, \citeasnoun{liang:2008}), SCAD \cite{fan:2001} and LASSO \cite{tibshirani:1996}.
For NLPs we used functions modelSelection and rnlp in R package mombf 1.5.9,
using the default prior dispersions $\tau=0.358,0.133,0.119$ 
for pMOM, piMOM and peMOM priors (respectively),
which assign 0.01 prior probability to $| \theta_i/\sqrt{\phi} |< 0.2$ \cite{johnson:2010},
and $\phi \sim \mbox{IG}(0.01/2,0.01/2)$.
We set a Beta-Binomial(1,1) prior on the model space truncated so that $P(M_k)=0$ whenever $\mbox{dim}(\Theta_k)>n$,
and adapted the Gibbs model space search in \citeasnoun{johnson:2012} to never visit those models.
For benchmark and hyper-g priors we used function bms in R package BMS 0.3.3 with default parameters,
again with the Beta-Binomial(1,1) prior.
For LASSO and SCAD we set the penalization parameter with 10-fold cross-validation
using functions mylars and ncvreg in R packages parcor 0.2.6 and ncvreg 3.2.0 (respectively) with default parameters.
All the R code is provided as supplementary material.
We assess the relative merits attained by each method without the help
of any procedures to pre-screen covariates.

\subsection{Posterior samples for a given model}
\label{sec:postdraw_onemodel}

\begin{figure}
\begin{center}
\begin{tabular}{cc}
\includegraphics[width=0.4\textwidth]{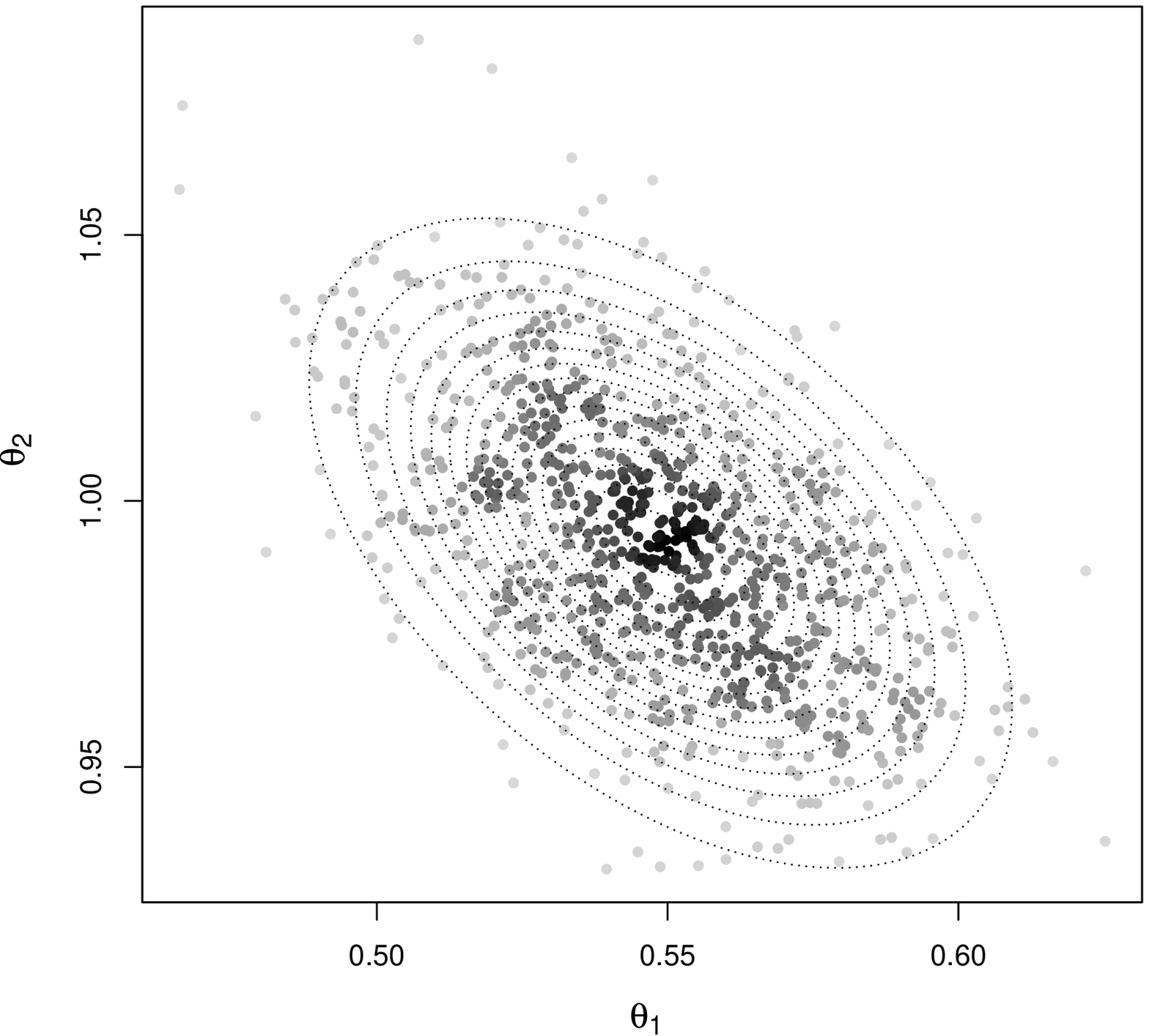} &
\includegraphics[width=0.4\textwidth]{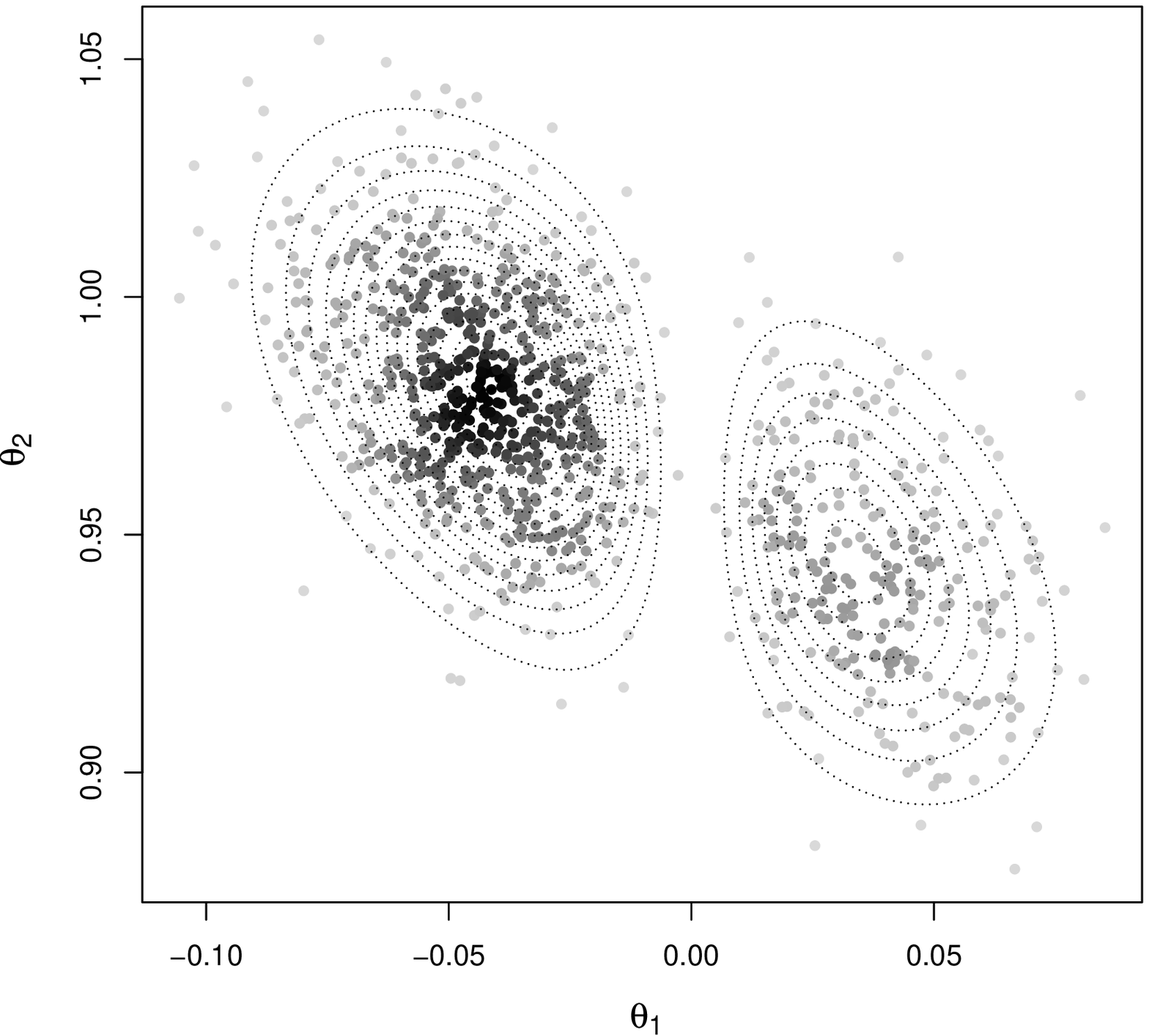} \\
\includegraphics[width=0.4\textwidth]{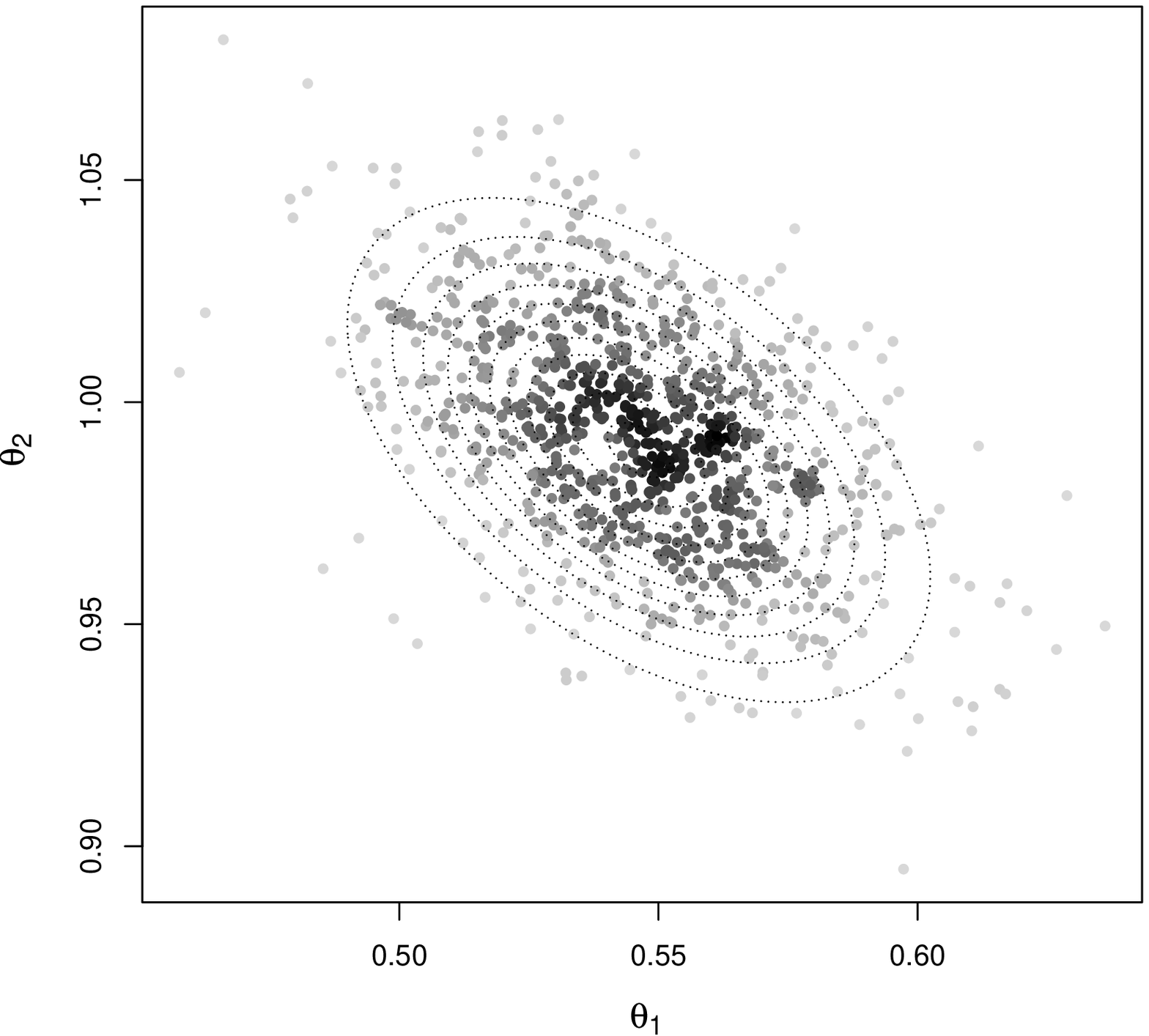} &
\includegraphics[width=0.4\textwidth]{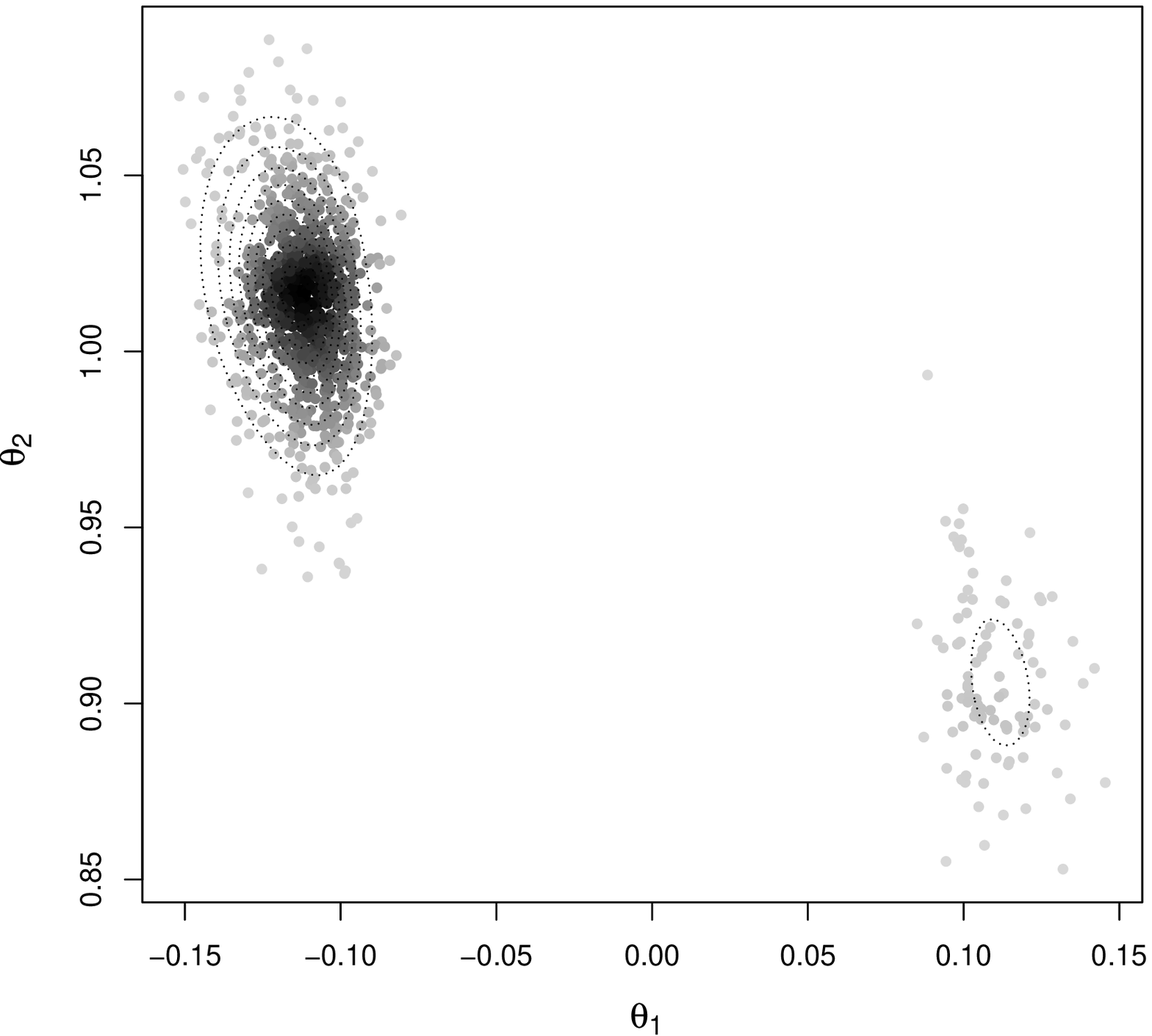} \\
\includegraphics[width=0.4\textwidth]{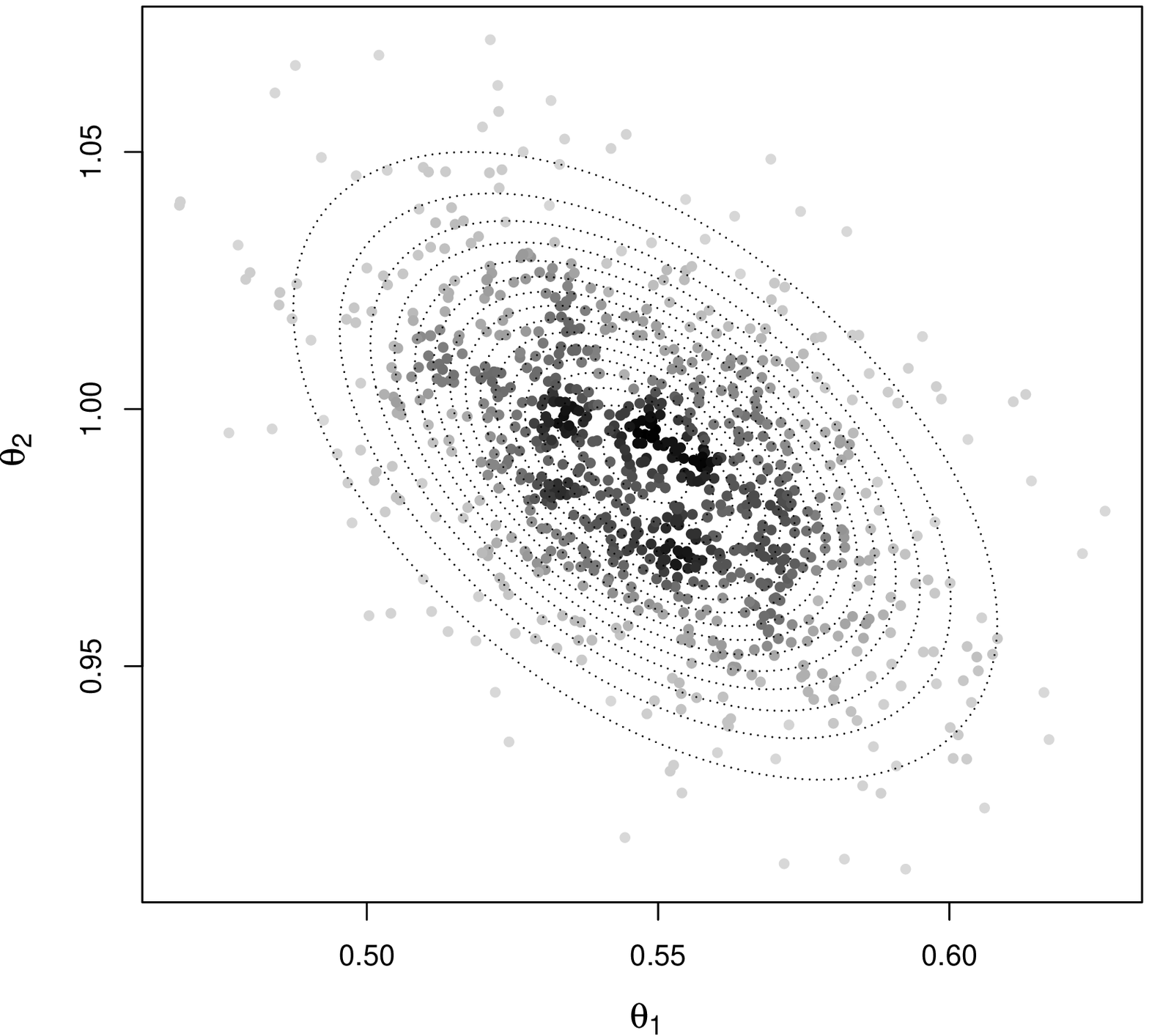} &
\includegraphics[width=0.4\textwidth]{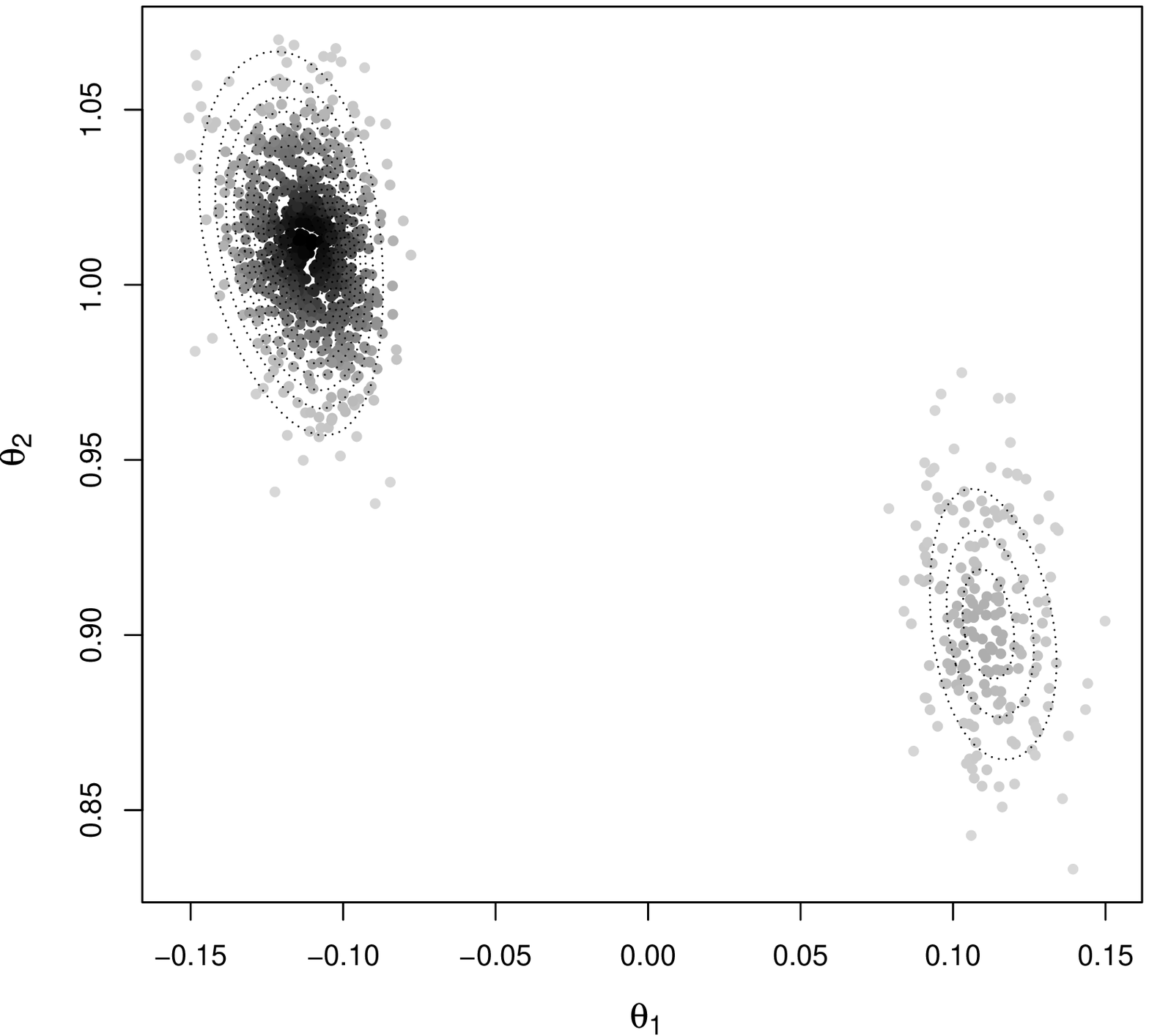} \\
\end{tabular}
\end{center}
\caption[900 Gibbs draws when $\bm{\theta}=(0.5,1)'$
and $\bm{\theta}=(0,1)'$]{
900 Gibbs draws when $\bm{\theta}=(0.5,1)'$ (left)
and $\bm{\theta}=(0,1)'$ (right) and posterior density contours.
Top: MOM ($\tau=0.358$); Middle: iMOM ($\tau=0.133$); Bottom: eMOM ($\tau=0.119$)}
\label{fig:sim_mom}
\end{figure}

\begin{table}
\begin{center}
\begin{tabular}{|c|c|c|c|}\hline
\multicolumn{4}{|c|}{$\theta_1=0.5$, $\theta_2=1$} \\ \hline
  & MOM & iMOM & eMOM \\
$\theta_1=0$, $\theta_2=0$      & 0         & 0        & 0  \\ 
$\theta_1=0$, $\theta_2\neq0$   & 2.8e-78   & 2.72e-78 & 6.86e-79 \\ 
$\theta_1 \neq 0$, $\theta_2=0$ & 1.95e-191 & 3.82-e191& 5.90e-191 \\ 
$\theta_1 \neq0$, $\theta_2=0$  & 1         & 1        & 1  \\ \hline \hline
\multicolumn{4}{|c|}{$\theta_1=0$, $\theta_2=1$} \\ \hline
$\theta_1=0$, $\theta_2=0$      & 1.69e-225 & 4.39e-225 & 1.08e-224 \\ 
$\theta_1=0$, $\theta_2\neq0$   & 0.999     & 1         & 1 \\ 
$\theta_1 \neq 0$, $\theta_2=0$ & 1.82e-193 & 1.64e-192 & 6.80e-192 \\ 
$\theta_1 \neq0$, $\theta_2=0$  & 8.83e-05     & 3.30e-09  & 3.17e-09 \\ \hline
\end{tabular}
\end{center}
\caption[Posterior model probabilities with 2 predictors]{Posterior model probabilities with 2 predictors
($\theta_1=0.5$ or $0$, $\theta_2=\phi=1$, $n=1000$)}
\label{tab:pp}
\end{table}

\begin{table}
\begin{center}
\begin{tabular}{|c|c|c|c|} \hline
\multicolumn{4}{|c|}{$\theta_1=0.5$, $\theta_2=1$} \\ \hline
& MOM & iMOM & eMOM \\
$\theta_1$ & 0.096 & 0.110 & 0.018 \\
$\theta_2$ & 0.034 & 0.134 & 0.019 \\
$\phi$     &-0.016 & 0.069 & 0.027 \\ \hline \hline
\multicolumn{4}{|c|}{$\theta_1=0$, $\theta_2=1$} \\ \hline
$\theta_1$ & 0.115 & 0.032 & 0.049 \\
$\theta_2$ & 0.134 & 0.122 & 0.042 \\
$\phi$     &-0.040 & 0.327 & 0.353 \\ \hline
\end{tabular}
\end{center}
\caption[Serial correlation with 2 predictors]{Serial correlation with 2 predictors ($\theta_1=0.5$ or $0$, $\theta_2=\phi=1$, $n=1000$)}
\label{tab:rho_nlp}
\end{table}

We simulate 1,000 realizations from $y_i \sim N(\theta_1 x_{1i} + \theta_2 x_{2i}, 1)$,
where $(x_{1i},x_{2i})$ are drawn from a bivariate Normal with $E(x_{1i})=E(x_{2i})=0$,
$V(x_{1i})=V(x_{2i})=2$, $\mbox{Cov}(x_{1i},x_{2i})=1$.
We first consider $\theta_1=0.5$, $\theta_2=1$,
and compute posterior probabilities for the four possible models.
We assign equal a priori probabilities and
obtain exact integrated likelihoods 
using functions \texttt{pmomMarginalU}, \texttt{pimomMarginalU} and \texttt{pemomMarginalU} in the \texttt{mombf} package
(the former is available in closed-form, for the latter two we used $10^6$ importance samples).
The posterior probability assigned to the full model 
under all three priors is 1 (up to rounding) (Table \ref{tab:pp}).
Figure \ref{fig:sim_mom} (left) shows 900 Gibbs draws (100 burn-in) obtained under the full model.
The posterior mass is well-shifted away from 0 and resembles an elliptical shape for the three priors.
Table \ref{tab:rho_nlp} gives the first-order auto-correlations, which are very small.
This example reflects the advantages of the orthogonalization strategy,
which is particularly efficient as the latent truncation becomes negligible.

We now set $\theta_1=0$, $\theta_2=1$
and keep $n=1000$ and $(x_{1i},x_{2i})$ as before.
We simulated several data sets and in most cases
did not observe a noticeable posterior multi-modality.
We portray a specific simulation that did exhibit multi-modality,
as this poses a greater challenge from a sampling perspective.
Table \ref{tab:pp} shows that
the data-generating model adequately concentrated the posterior mass.
Although the full model was clearly dismissed in light of the data,
as an exercise we drew from its posterior.
Figure \ref{fig:sim_mom} (right) shows 900 Gibbs draws after a 100 burn-in,
and Table \ref{tab:rho_nlp} indicates the auto-correlation.
The sampled values adequately captured the multiple modes.

\subsection{High-dimensional estimation}
\label{ssec:highdim_ex}

\begin{figure}
\begin{center}
\begin{tabular}{cc}
$\rho=0$ & $\rho=0.25$ \\
\includegraphics[width=0.45\textwidth,height=0.35\textwidth]{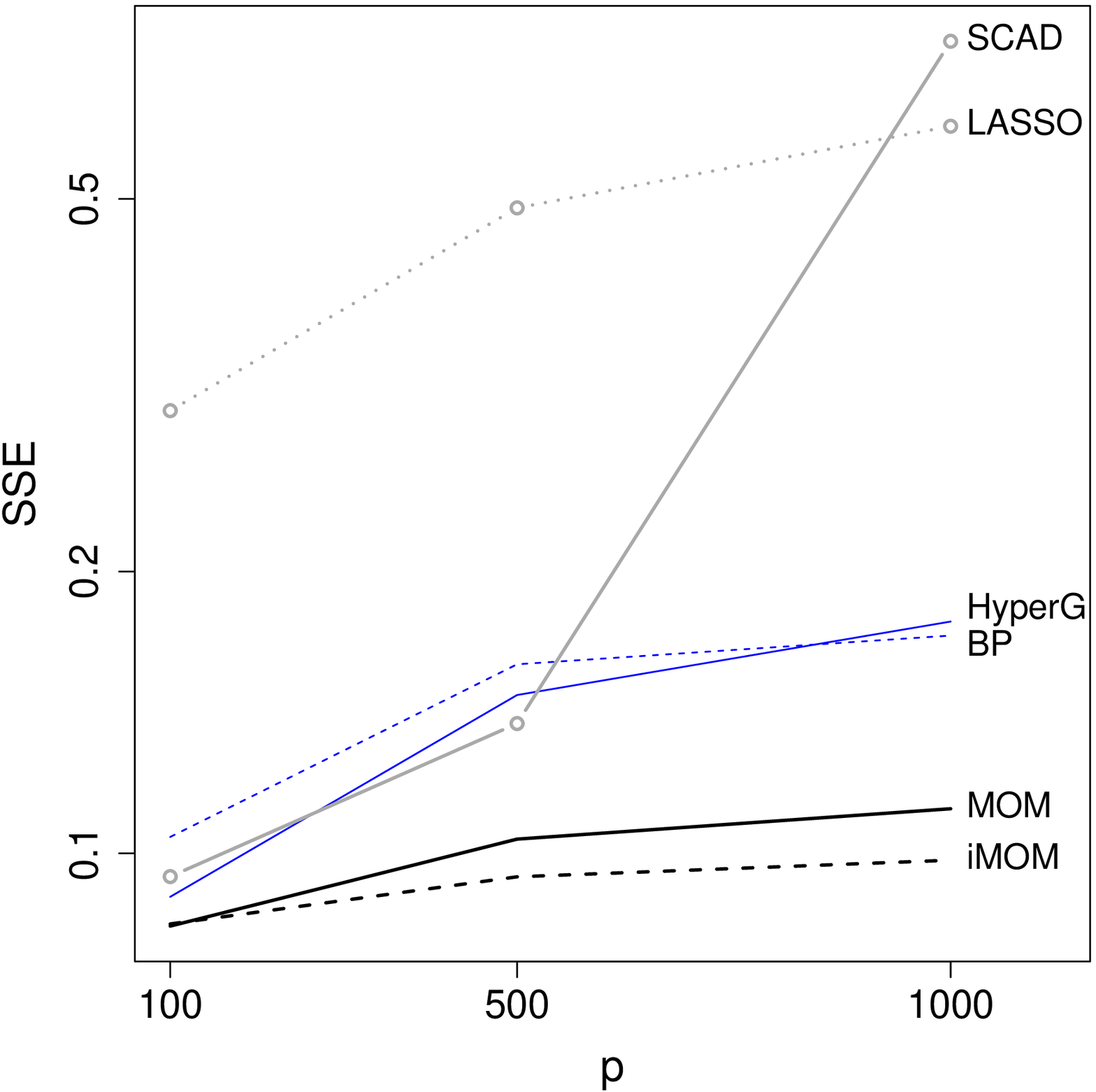} &
\includegraphics[width=0.45\textwidth,height=0.35\textwidth]{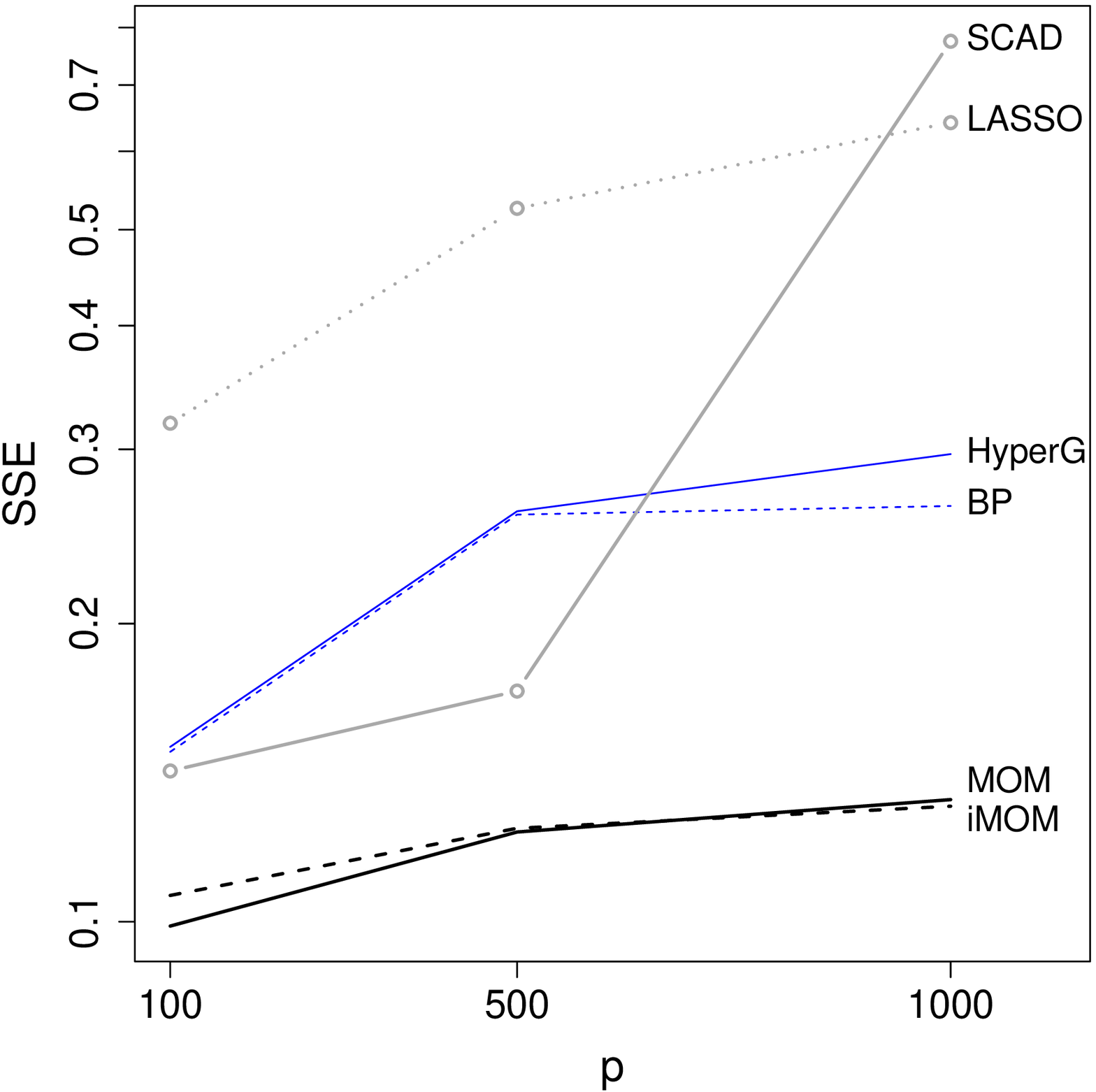} \\
\includegraphics[width=0.45\textwidth,height=0.35\textwidth]{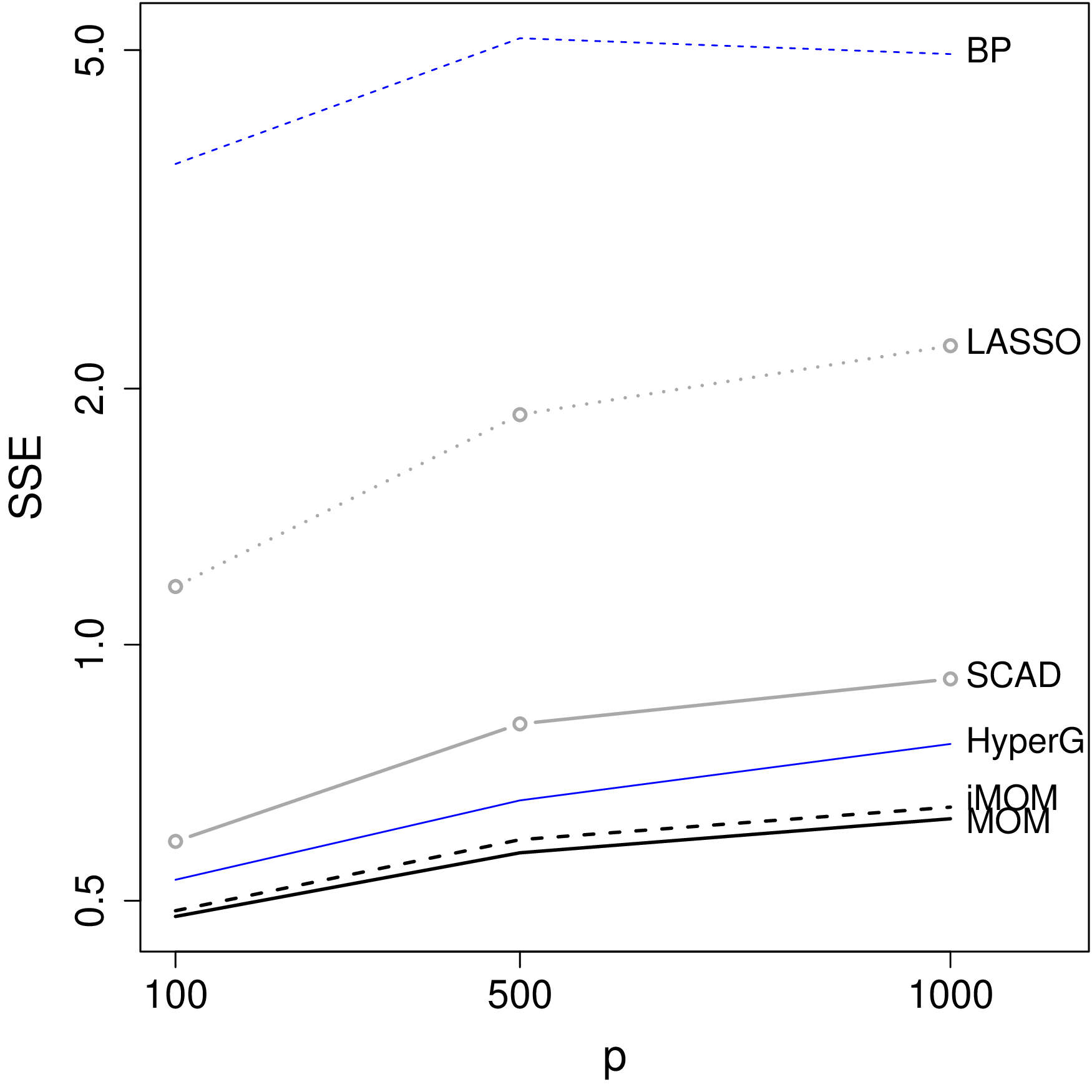} &
\includegraphics[width=0.45\textwidth,height=0.35\textwidth]{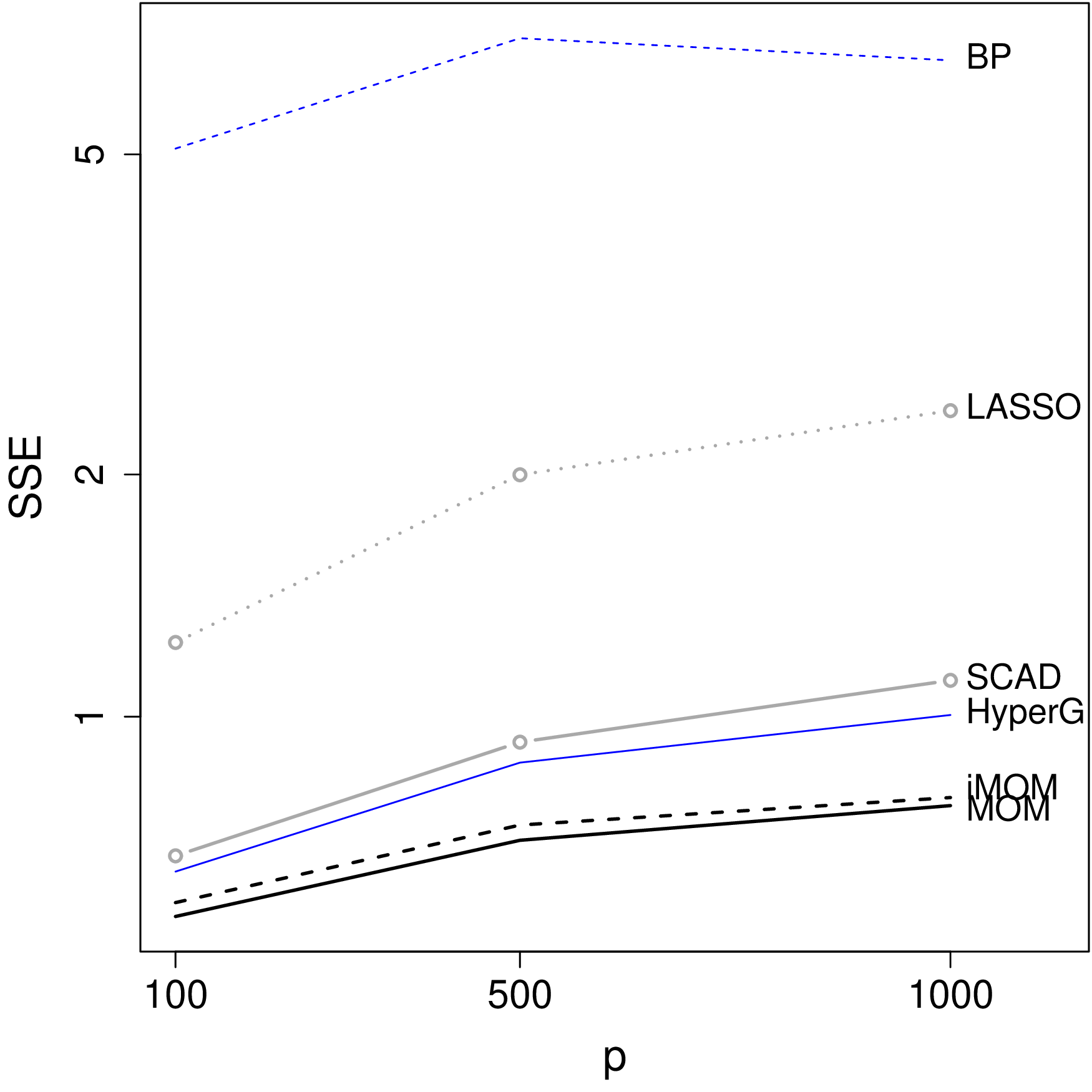} \\
\includegraphics[width=0.45\textwidth,height=0.35\textwidth]{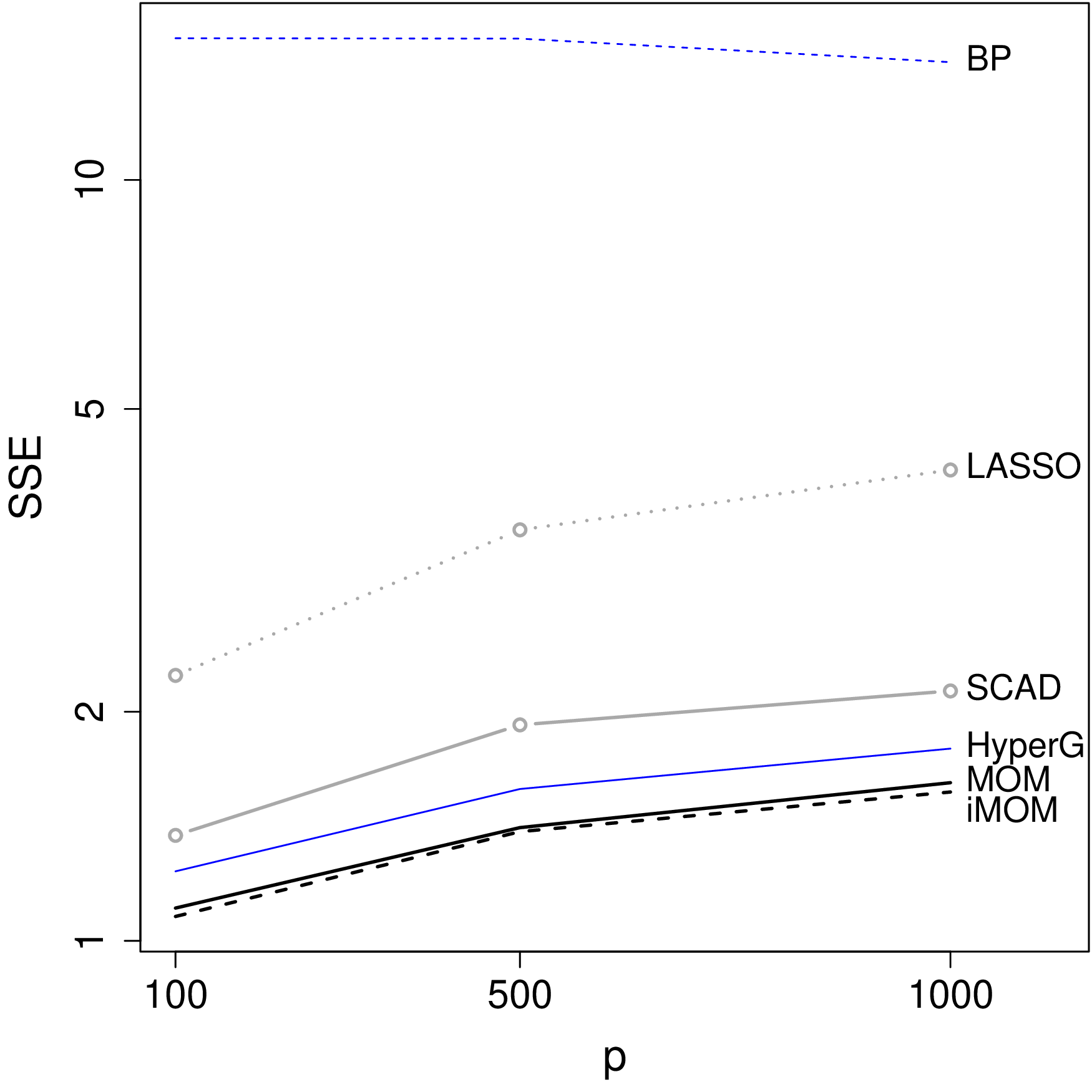} &
\includegraphics[width=0.45\textwidth,height=0.35\textwidth]{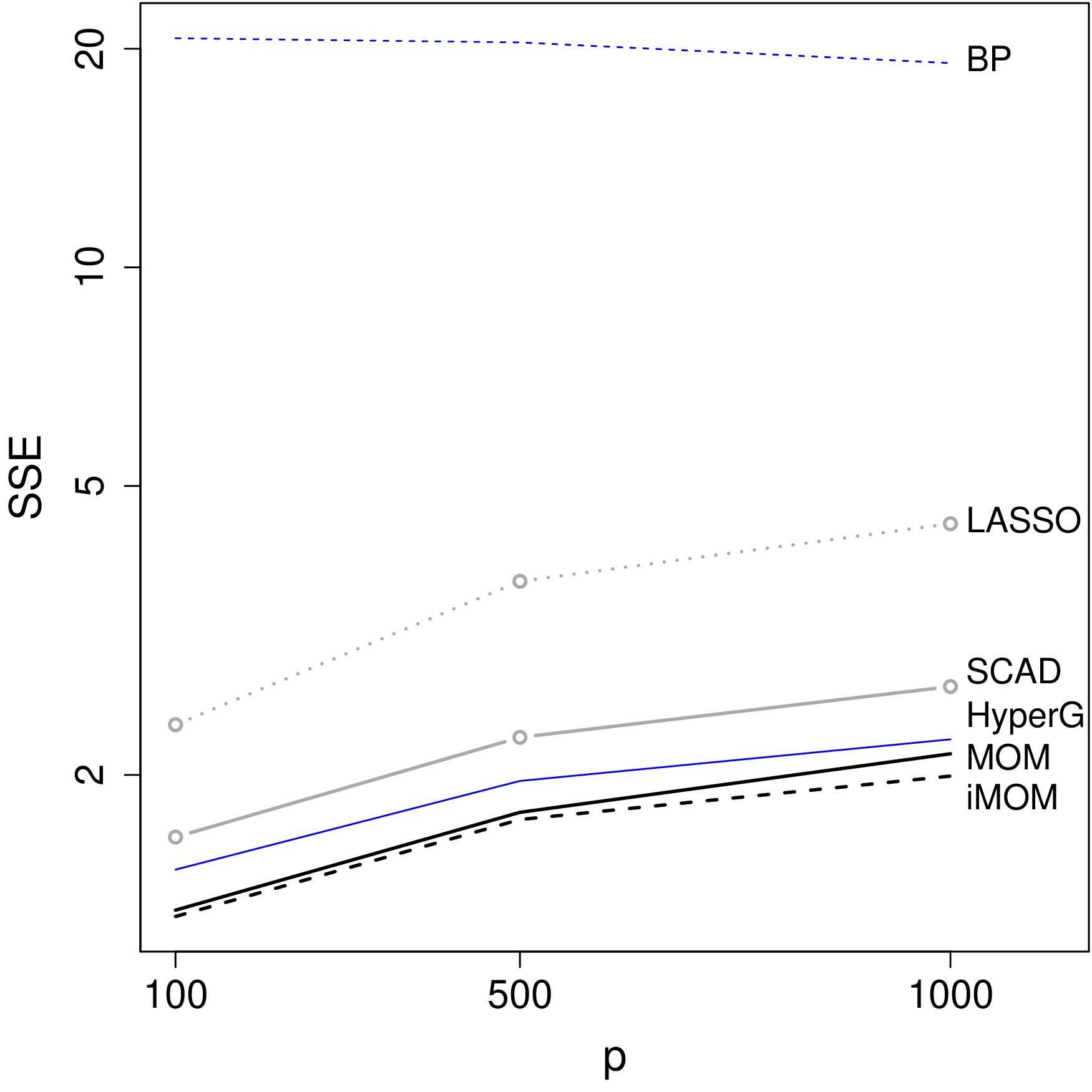} \\
\end{tabular}
\end{center}
\caption[Mean SSE$=\sum_{i=1}^{p} (\hat{\theta}_i - \theta_i)^2$]
{Mean SSE$=\sum_{i=1}^{p} (\hat{\theta}_i - \theta_i)^2$ when $\phi=1,4,8$ (top, middle, bottom), $\rho=0,0.25$ (left, right).
Simulation settings: $n=100$, $p=100,500,1000$ and 5 non-zero coefficients $0.6,1.2,1.8,2.4,3.0$.}
\label{fig:mse_pgrow}
\end{figure}

\begin{figure}
\begin{center}
\begin{tabular}{cc}
\multicolumn{2}{c}{$\rho=0$, $\phi=1$} \\
\includegraphics[width=0.45\textwidth,height=0.32\textwidth]{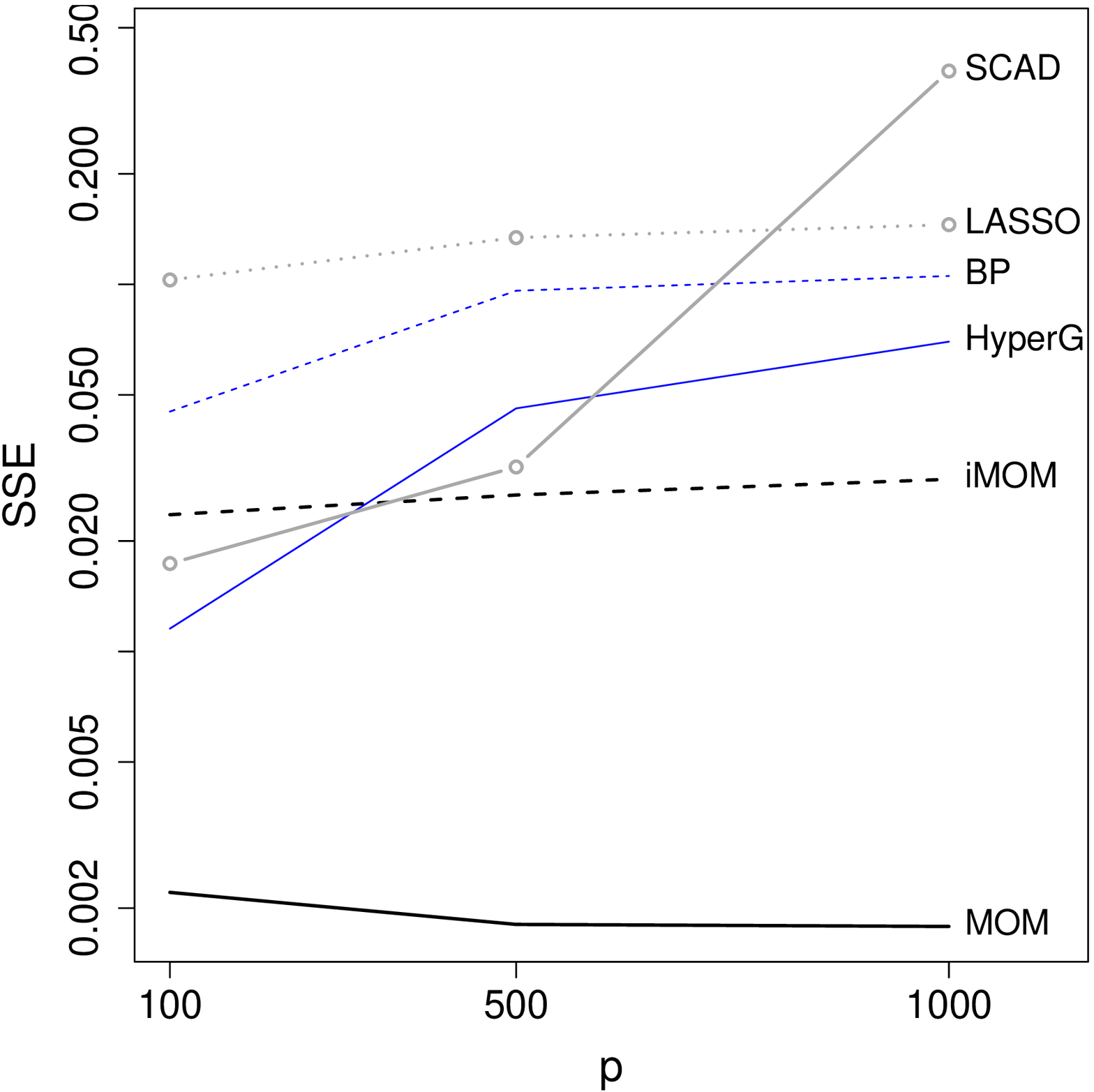} &
\includegraphics[width=0.45\textwidth,height=0.32\textwidth]{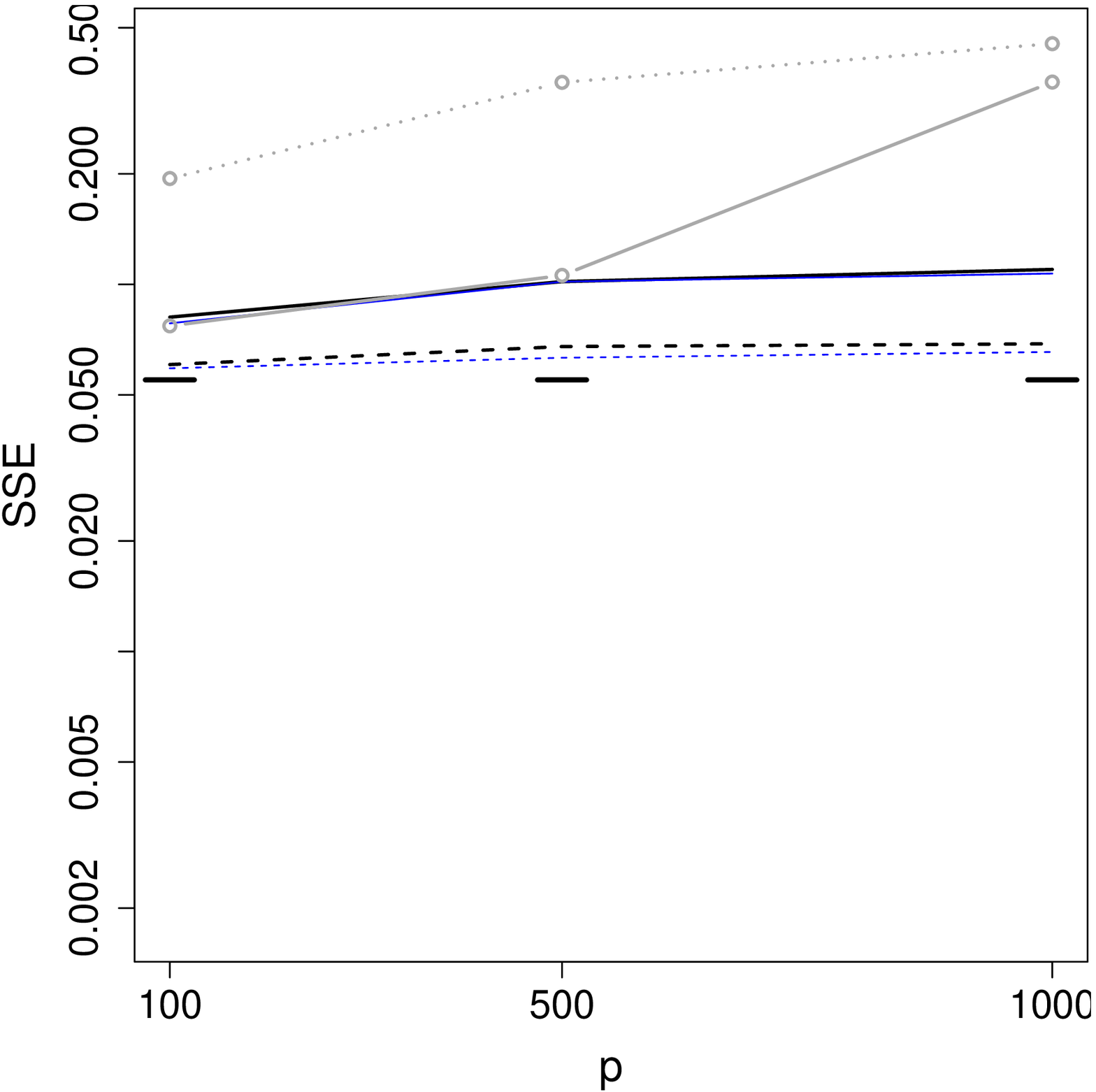} \\
\multicolumn{2}{c}{$\rho=0$, $\phi=4$} \\
\includegraphics[width=0.45\textwidth,height=0.32\textwidth]{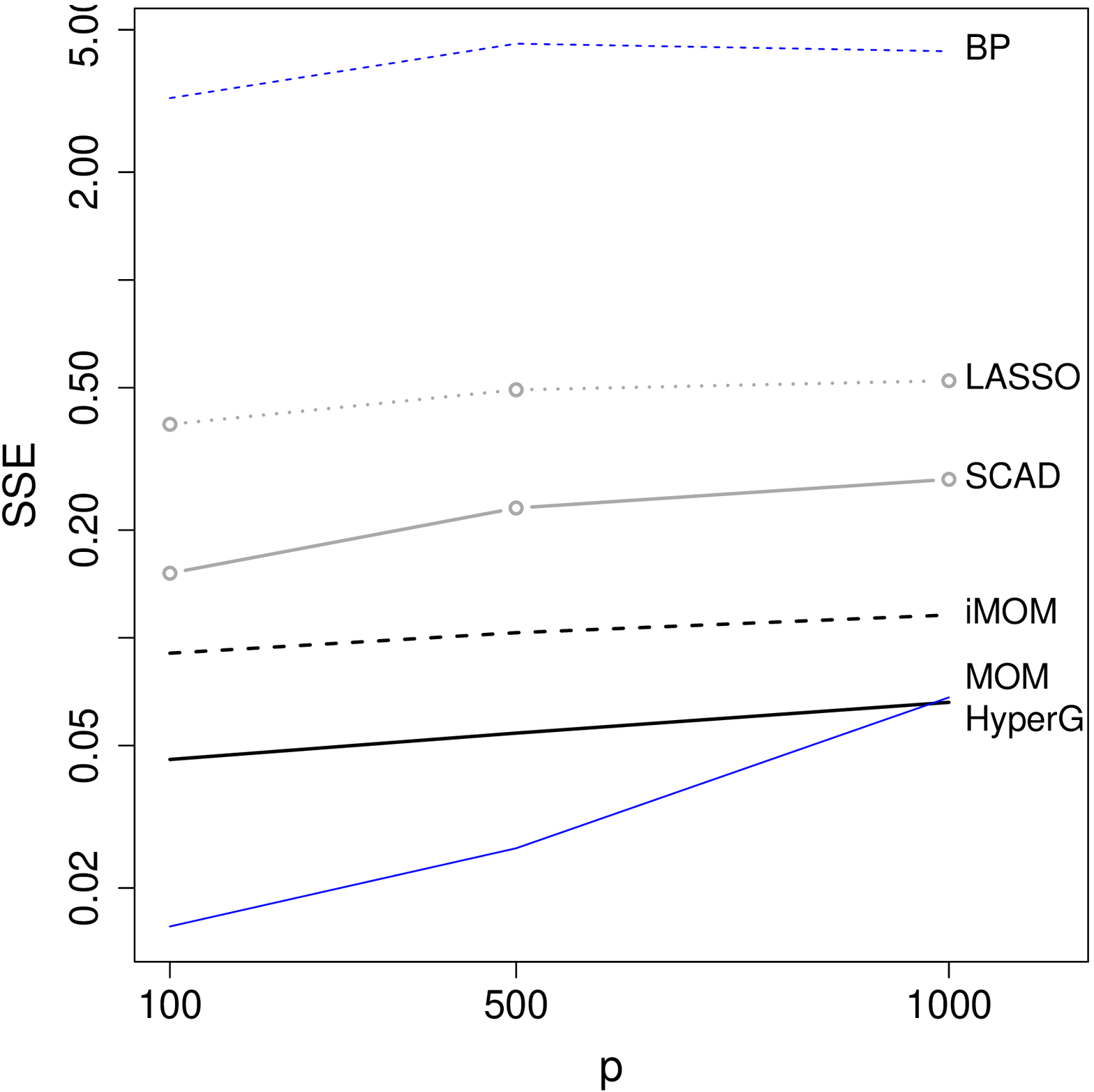} &
\includegraphics[width=0.45\textwidth,height=0.32\textwidth]{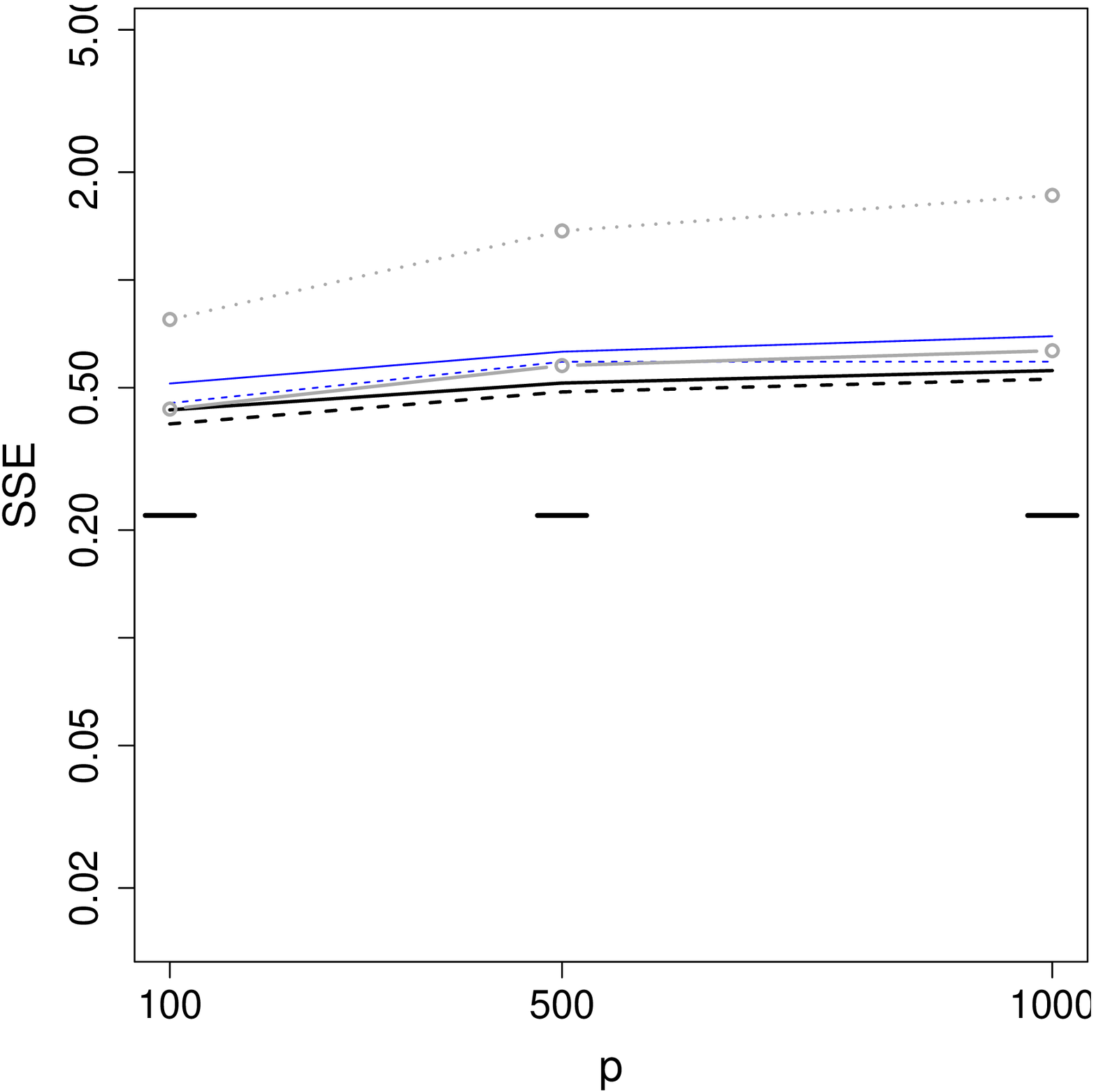} \\
\multicolumn{2}{c}{$\rho=0$, $\phi=8$} \\
\includegraphics[width=0.45\textwidth,height=0.32\textwidth]{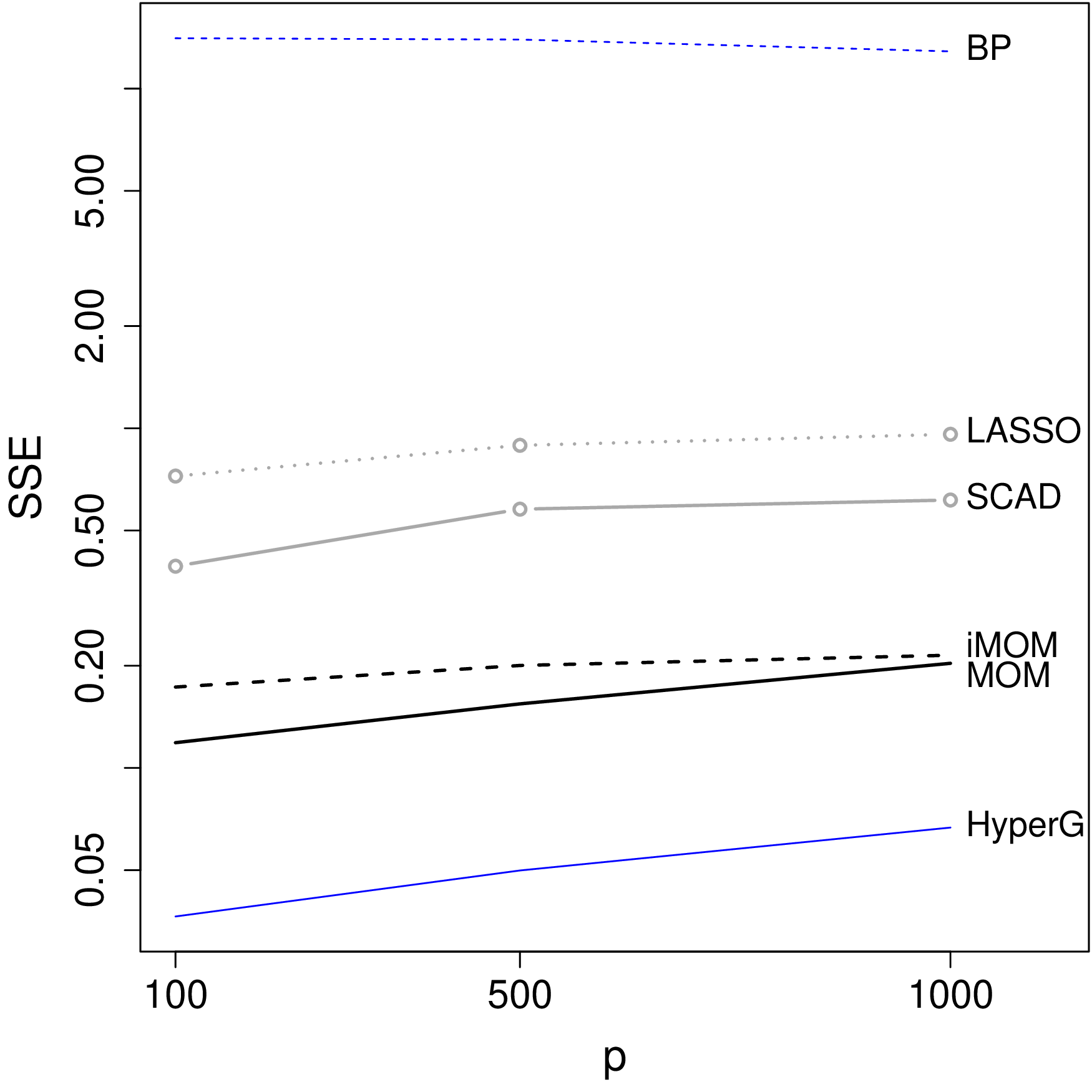} &
\includegraphics[width=0.45\textwidth,height=0.32\textwidth]{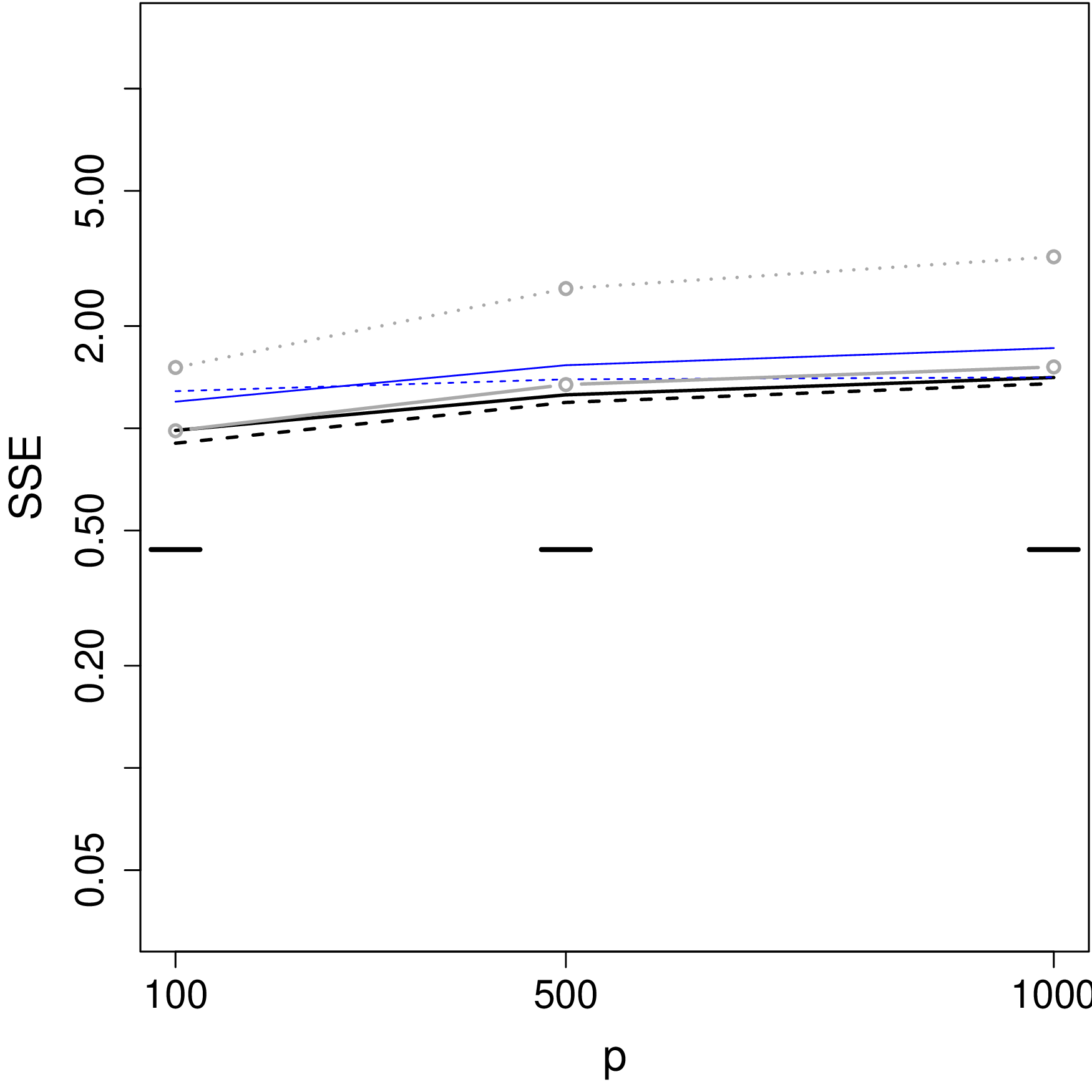} \\
\end{tabular}
\end{center}
\caption[{Mean SSE for $\theta_i=0$ and $\theta_i \neq 0$}]
{Mean SSE for $\theta_i=0$ (left) and $\theta_i \neq 0$ (right) when $\phi=1,4,8$.
Simulation settings: $\rho=0$, $n=100$, $p=100,500,1000$ and 5 non-zero coefficients $0.6,1.2,1.8,2.4,3.0$.}
\label{fig:mse_pgrow_cond_rho0}
\end{figure}

\begin{figure}
\begin{center}
\begin{tabular}{cc}
\multicolumn{2}{c}{$\rho=0.25$, $\phi=1$} \\
\includegraphics[width=0.45\textwidth,height=0.32\textwidth]{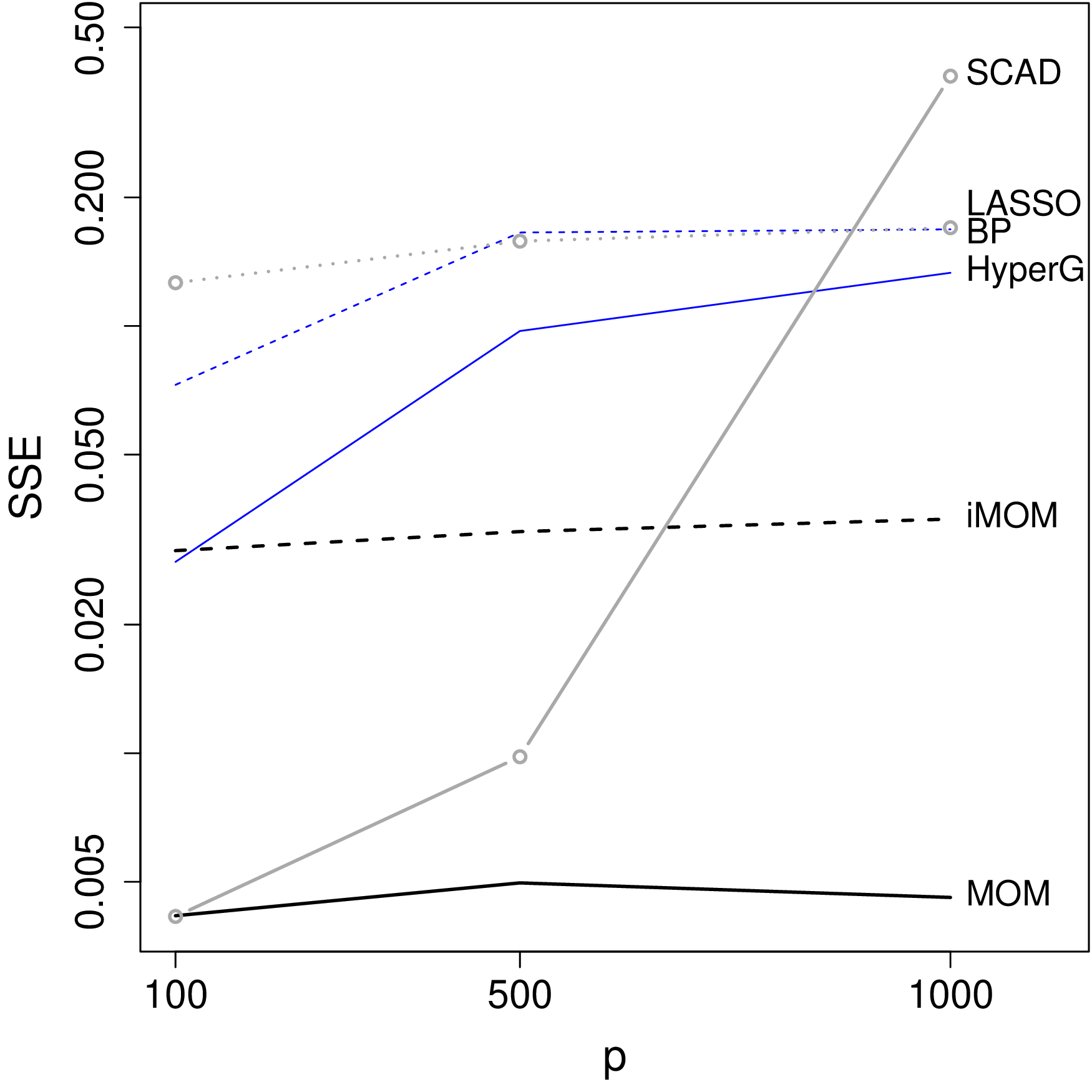} &
\includegraphics[width=0.45\textwidth,height=0.32\textwidth]{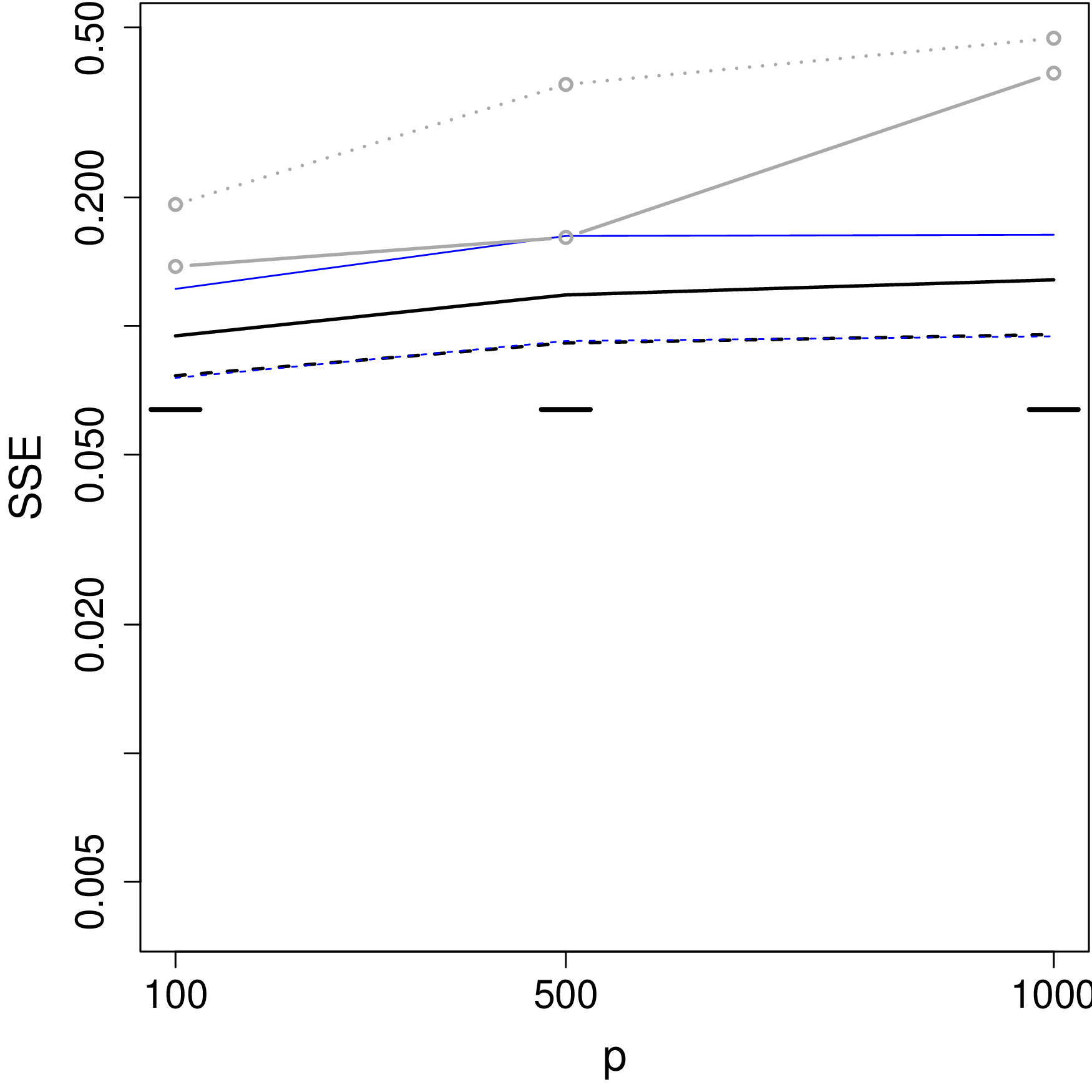} \\
\multicolumn{2}{c}{$\rho=0.25$, $\phi=4$} \\
\includegraphics[width=0.45\textwidth,height=0.32\textwidth]{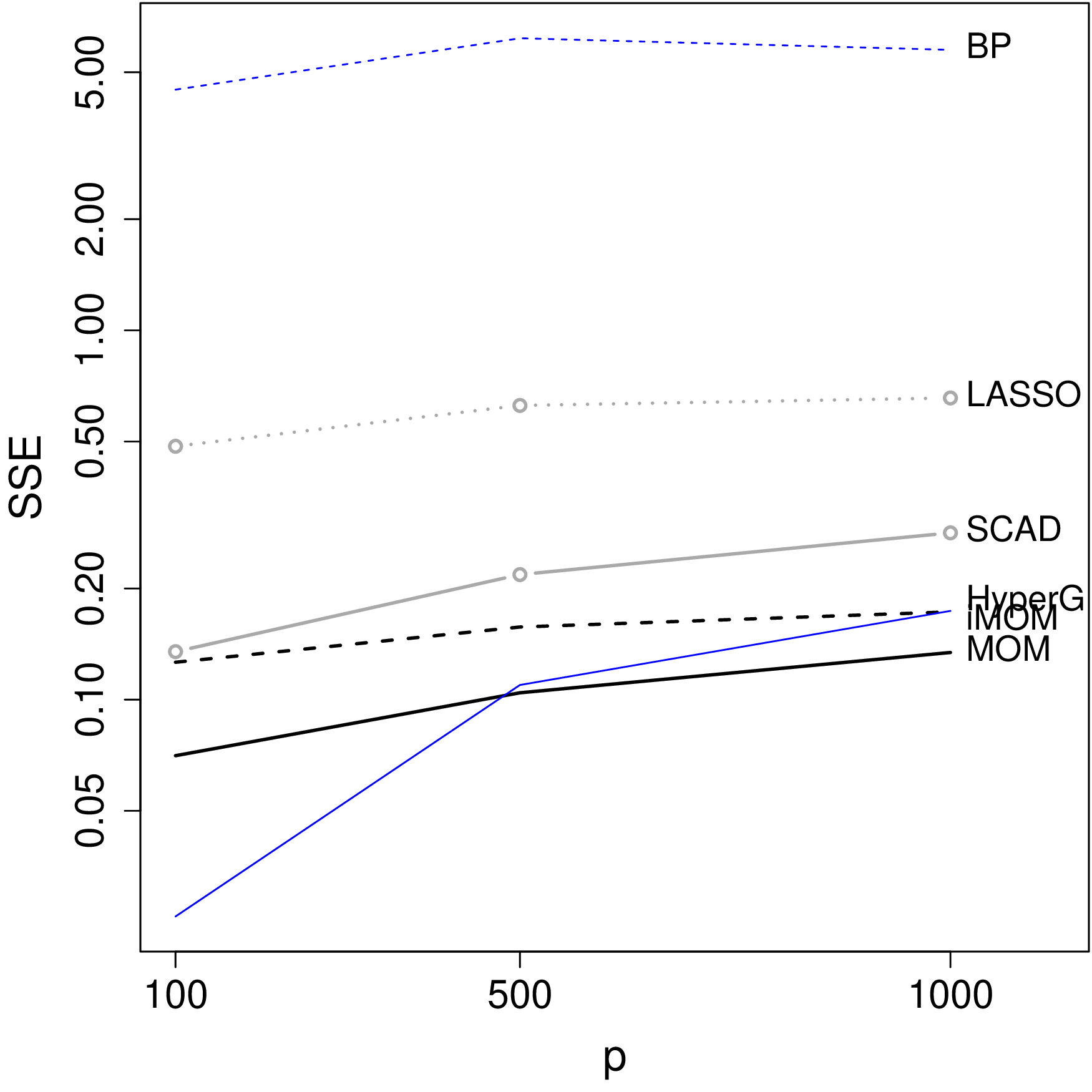} &
\includegraphics[width=0.45\textwidth,height=0.32\textwidth]{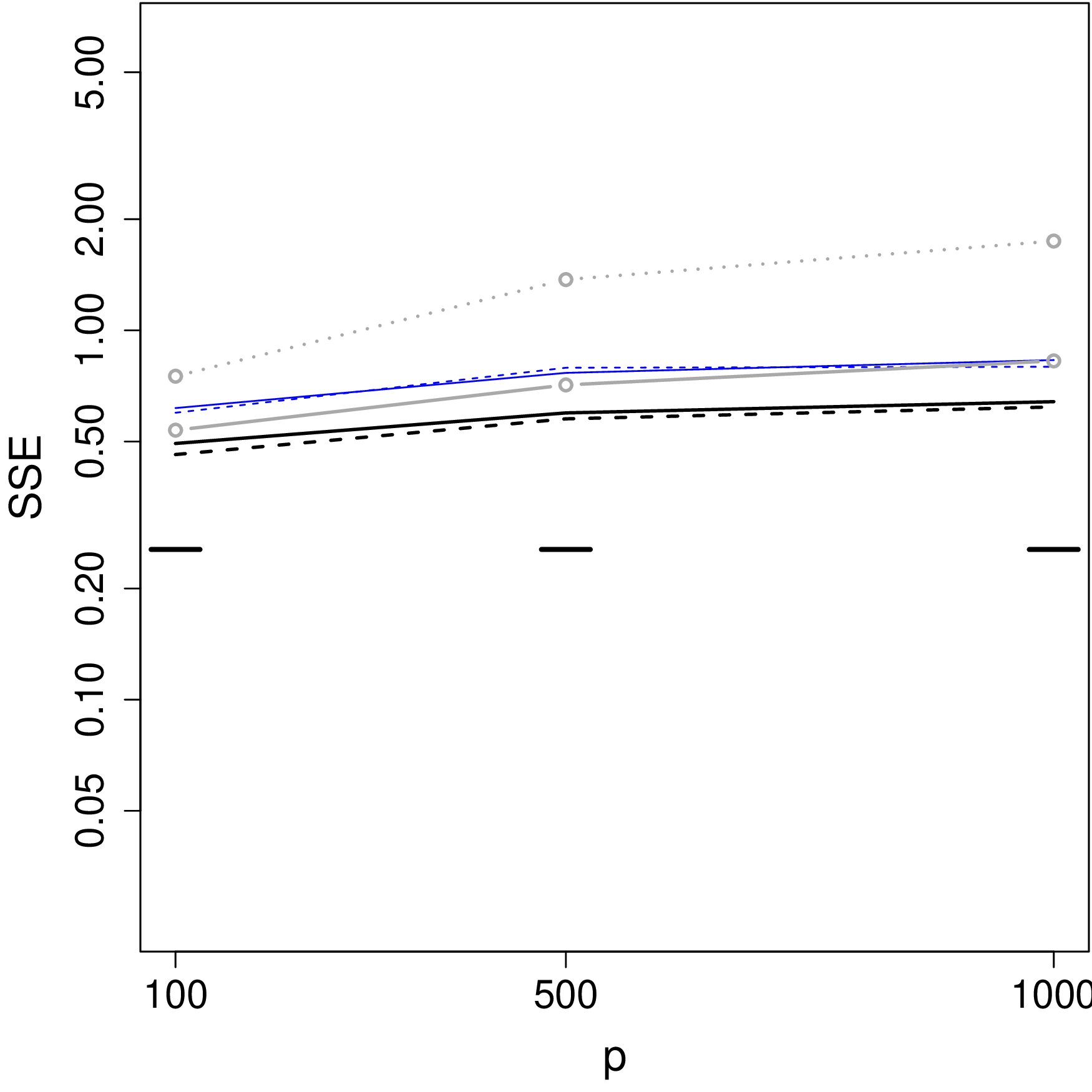} \\
\multicolumn{2}{c}{$\rho=0.25$, $\phi=8$} \\
\includegraphics[width=0.45\textwidth,height=0.32\textwidth]{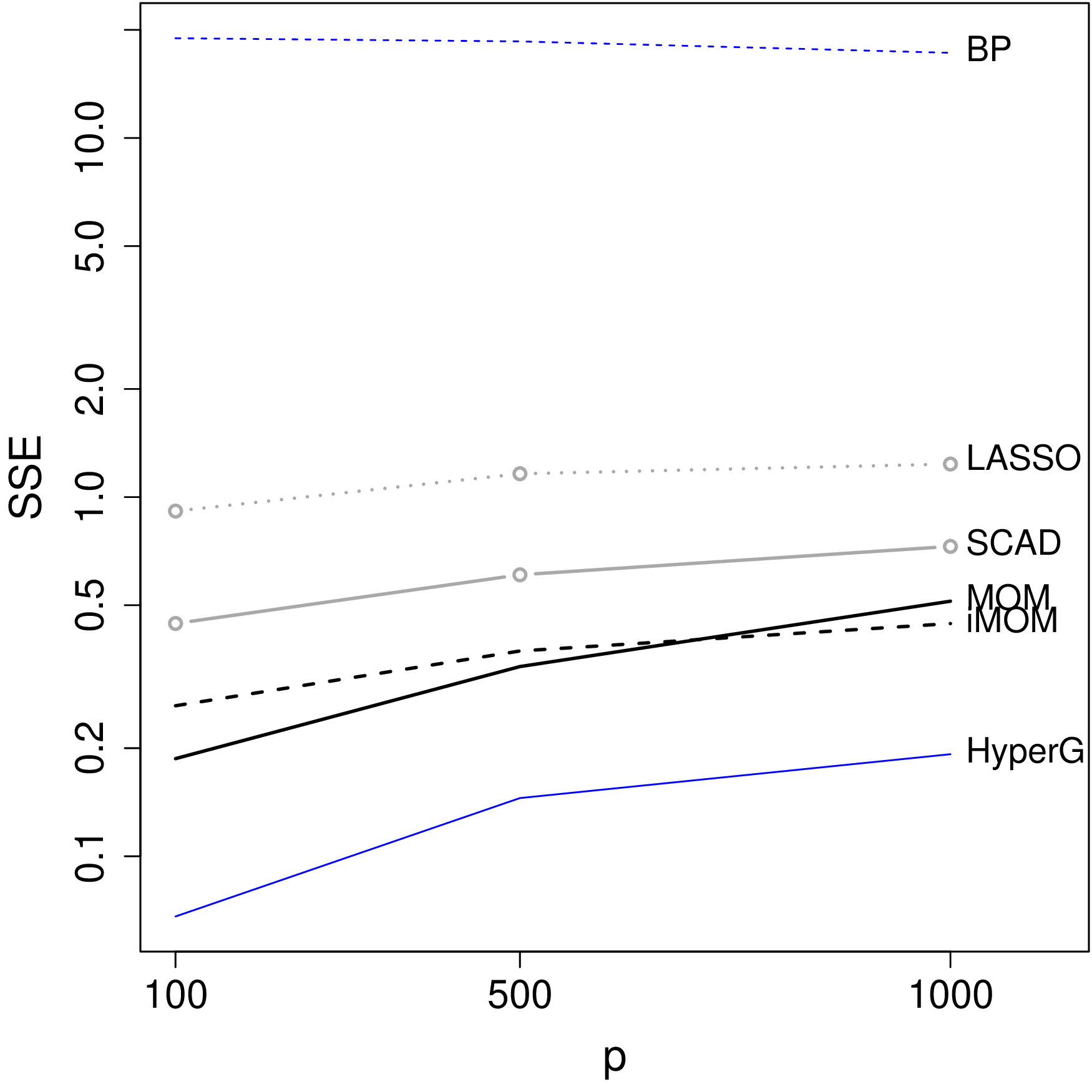} &
\includegraphics[width=0.45\textwidth,height=0.32\textwidth]{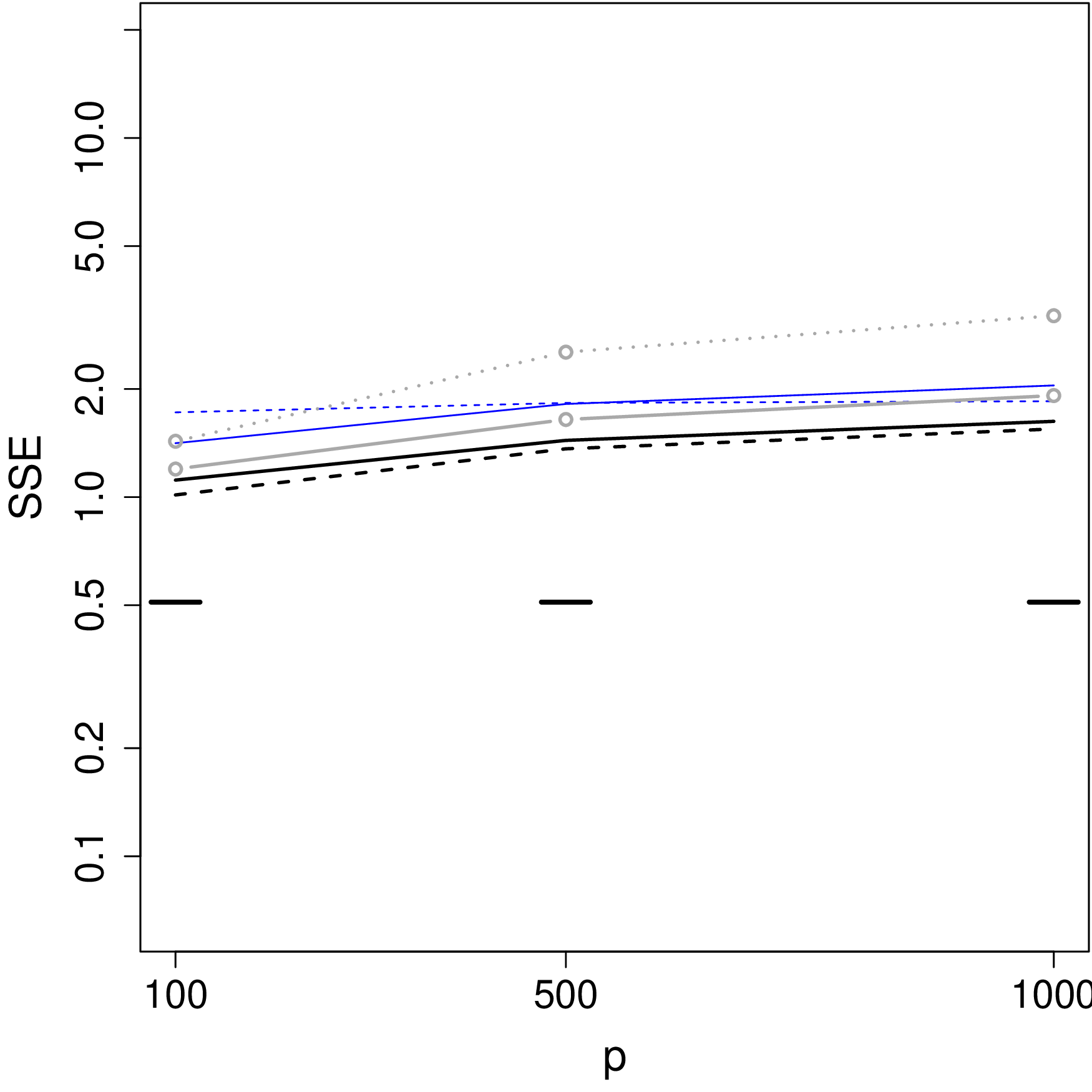} \\
\end{tabular}
\end{center}
\caption[{Mean SSE for $\theta_i=0$ and $\theta_i \neq 0$}]
{Mean SSE for $\theta_i=0$ (left) and $\theta_i \neq 0$ (right) when $\phi=1,4,8$.
Simulation settings: $\rho=0.25$, $n=100$, $p=100,500,1000$ and 5 non-zero coefficients $0.6,1.2,1.8,2.4,3.0$.}
\label{fig:mse_pgrow_cond_rho25}
\end{figure}

We perform a simulation study with $n=100$ and growing dimensionality $p=100,500,1000$.
We set $\theta_i=0$ for $i=1,\ldots,p-5$,
the remaining 5 coefficients to $(0.6,1.2,1.8,2.4,3)$
and consider residual variances $\phi=1,4,8$.
Covariates were sampled from ${\bf x} \sim N({\bf 0}, \Sigma)$,
where $\Sigma$ has unit variances and all pairwise correlations set to $\rho=0$ or $\rho=0.25$.
We remark that $\rho$ are population correlations,
the maximum absolute sample correlations when $\rho=0$
being $0.37,0.44,0.47$ for $p=100,500,1000$ (respectively),
and $0.54$, $0.60$, $0.62$ when $\rho=0.25$.
We simulated 1,000 data sets under each setup.

Parameter estimates $\hat{\bm{\theta}}$ were obtained via BMA.
Let $\delta$ be the model indicator.
For each simulated data set, we performed 1,000 full Gibbs iterations 
(100 burn-in) 
which are equivalent to 1,000$\times p$ birth-death moves.
These provided posterior samples $\delta_1,\ldots,\delta_{1000}$ from $P(\delta \mid {\bf y})$.
We estimated model probabilities from the proportion of MCMC visits
and obtained 
posterior draws for $\bm{\theta}$ using the algorithms in Section \ref{ssec:gibbslm}.
For benchmark (BP) and hyper-g (HG) priors we used the same strategy for $\delta$
and drew $\bm{\theta}$ from their corresponding posteriors.

Figure \ref{fig:mse_pgrow} shows sum of squared errors (SSE)
$\sum_{i=1}^{p} (\hat{\theta}_i - \theta_i)^2$ averaged across simulations
for $\phi=1,4,8$, $\rho=0,0.25$.
pMOM and piMOM perform similarly and
present an SSE between 1.15 and 10 times lower than other methods in all scenarios.
As $p$ grows, differences between methods tend to be larger.
To obtain more insight on how the lower SSE is achieved,
Figures \ref{fig:mse_pgrow_cond_rho0}-\ref{fig:mse_pgrow_cond_rho25} show SSE
separately for $\theta_i=0$ (left) and $\theta_i \neq 0$ (right).
The largest differences between methods were observed for $\theta_i=0$,
where SSE remains very stable as $p$ grows for pMOM and piMOM.
For $\theta_i \neq 0$ differences in SSE are smaller, iMOM slightly outperforming MOM.
Here SSE tends to be largest for LASSO.
For all methods as signal-to-noise ratios $|\theta_i|/\sqrt{\phi_i}$ decrease
the SSE worsens relative to the least squares oracle estimator
(Figures \ref{fig:mse_pgrow_cond_rho0}-\ref{fig:mse_pgrow_cond_rho25}, right panels, black horizontal segments).

\subsection{Gene expression data}
\label{ssec:geneexpr}

\begin{table}
\begin{center}
\begin{tabular}{|l|cc|ccc|} \hline
                    & \multicolumn{2}{c}{$p=172$} & \multicolumn{3}{|c|}{$p=10,172$} \\
                    & $\bar{p}$ & $R^2$ & $\bar{p}$ & $R^2$ & CPU time \\ \hline
MOM                 &      4.3  & 0.566 &     6.5   & 0.617 & 1m 52s \\
iMOM                &      5.3  & 0.560 &    10.3   & 0.620 & 59m \\
BP                  &      4.2  & 0.562 &   259.0   & 0.014 & 21h 27m \\
HG                  &     10.5  & 0.559 &   116.6   & 0.419 & 74 days \\
SCAD                &      29   & 0.565 &    81     & 0.535 & 16.7s \\ 
LASSO               &      42   & 0.586 &   159     & 0.570 & 23.7s \\ \hline 
\end{tabular}
\end{center}
\caption[Expression data with $p=172$ or $10,172$ genes]{Expression data with $p=172$ or $10,172$ genes.
$\bar{p}$: mean (MOM, iMOM, BP, HG) or selected number of predictors (SCAD, LASSO).
$R^2$ coefficient is between $(y_i,\hat{y}_i)$ (leave-one-out cross-validation).
CPU time on Linux OpenSUSE 13.1, 64 bits, 2.6GHz processor, 31.4Gb RAM
}
\label{tab:tgfb}
\end{table}


We assess predictive performance in high-dimensional gene expression data.
\citeasnoun{calon:2012} used mice experiments to identify 172 genes potentially
related to the gene TGFB,
and showed that these were related to colon cancer progression in an
independent data set with $n=262$ human patients.
TGFB plays a crucial role in colon cancer, hence it is important to understand
its relation to other genes.
Our goal is to predict TGFB in the human data,
first using only the $p=172$ genes and then adding 10,000 extra genes.
Both response and predictors were standardized to zero mean and unit
variance (data in Supplementary Material).
We assessed predictive performance via the leave-one-out cross-validated
$R^2$ coefficient between predictions and observations.
For Bayesian methods we report the posterior expected number of variables
in the model ({\it i.e.} the mean number of predictors used by BMA),
and for SCAD and LASSO the number of selected variables.

Table \ref{tab:tgfb} shows the results.
For $p=172$ all methods achieve similar $R^2$, that for LASSO being slightly higher, although
pMOM, piMOM and BP used substantially less predictors.
These results appear reasonable in a moderately dimensional setting where
genes are expected to be related to TGFB.
However, when using $p=10,172$ predictors important differences between
methods are observed.
The BMA estimates based on MOM and iMOM priors remain parsimonious
(6.5 and 10.3 predictors, respectively)
and the cross-validated $R^2$ increases roughly by 0.05.
In contrast, for the remaining methods the number of predictors increased sharply
(the smallest being 81 predictors for SCAD) and a drop in $R^2$ was observed.
Predictors with large marginal inclusion probabilities in MOM/iMOM
included genes related to various cancer types (ESM1, GAS1, HIC1, CILP, ARL4C, PCGF2),
TGFB regulators (FAM89B) or AOC3 which is used to alleviate certain cancer symptoms.
These findings suggest that NLPs were extremely effective in detecting
a parsimonious subset of predictors in this high-dimensional example.
We also note that computation times were orders of magnitude lower than for LPs,
pMOM being competitive with the penalized likelihood methods.
Although run times depend on implementation issues,
both BMS and mombf are coded in C and follow similar algorithms
({\it e.g.} storing marginal likelihoods in memory, updating one variable at a time).
NLPs focus posterior mass on smaller models, which greatly alleviates the burden required for matrix inversions
(non-linear cost in the model size).
Further, NLPs tend to concentrate posterior probability on a smaller subset of models,
which  tend to be revisited and hence the marginal likelihood need not be recomputed.
Regarding the efficiency of our proposed posterior sampler for $(\bm{\theta},\phi)$,
we ran 10 independent chains with 1,000 iterations each and obtained mean serial correlations of $0.32$ (pMOM) and $0.26$ (piMOM)
across all non-zero coefficients.
The mean correlation between $\hat{E}(\bm{\theta}\mid {\bf y}_n)$ across all chain pairs was $>0.99$ (pMOM and piMOM).



\section{Discussion}
\label{sec:discussion}

 We showed how combining BMA with NLPs gives a coherent joint framework, encouraging parsimony
in model selection and, for parameter estimation, selective shrinkage focused on spurious coefficients.  
 Coupled with a theoretical investigation of NLP properties, we provide constructions based
on truncation mixtures:  to motivate NLPs from first principles,
add flexibility in prior choice and facilitate posterior sampling,
which we fully developed for linear regression.
We obtained remarkable results when $p>>n$
in simulations and gene expression data, with parsimonious models
achieving accurate cross-validated predictions and good computation times.
We did not require procedures to pre-screen covariates.
Posterior samples a reasonable serial correlations,
captured multi-modalities and independent runs delivered virtually identical estimates.

Our results show that it is not only possible to use the same prior for estimation and selection,
but it may indeed be desirable.
We remark that we used default informative priors,
which are relatively popular for testing, but perhaps less readily adopted for estimation.
Developing objective Bayes strategies to set the prior parameters is an interesting venue for future research,
as well as determining shrinkage rates in more general $p>>n$ cases,
and adapting the latent truncation construction beyond linear regression,
{\it e.g.} generalized linear, graphical or mixture models.

\appendix

\makeatletter   
 \renewcommand{\@seccntformat}[1]{APPENDIX~{\csname the#1\endcsname}.\hspace*{1em}}
 \makeatother

\section{Proofs and Miscellanea}
\label{sec:appendix}

\subsection{Proof of Proposition \ref{prop:parsimony_general}}

We start by stating two useful lemmas.

\begin{lemma}
Let $\pi(\bm{\theta}_k,\phi_k)=d(\bm{\theta}_k,\phi_k) \pi^L(\bm{\theta}_k,\phi_k)$ be either the pMOM, peMOM or piMOM prior,
where $d(\bm{\theta}_k,\phi_k) \rightarrow 0$ as $\theta_{ki} \rightarrow 0$ for any $i=1,\ldots,\mbox{dim}(\bm{\theta}_k)$
and $\pi^L(\bm{\theta}_k,\phi_k)$ is a local prior. 
Then $\pi(\bm{\theta}_k,\phi_k)= \tilde{d}(\bm{\theta}_k,\phi_k) \tilde{\pi}^L(\bm{\theta}_k,\phi_k)$,
where $\tilde{d}(\bm{\theta}_k,\phi_k) \leq c_k$ for some constant $c_k$
and $\tilde{\pi}^L(\bm{\theta}_k,\phi_k)$ is a local prior.
\label{lem:bounded_penalty}
\end{lemma}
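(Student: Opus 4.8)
The plan is to exploit the non-uniqueness of the decomposition $\pi=d\cdot\pi^L$: since writing a non-local prior as (penalty)$\times$(local prior) can be done in many ways, I would keep the \emph{same} factorization near the origin but trade the light-tailed Normal $\pi^L$ for a heavier-tailed local prior $\tilde{\pi}^L$, chosen so that the new penalty $\tilde{d}=\pi/\tilde{\pi}^L$ no longer grows in the tails. The key observation is that the only obstruction to boundedness of a penalty is its behaviour as $|\theta_{ki}|\to\infty$; near a sub-model value $\theta_{ki}\to 0$ the penalty must and does vanish, which is exactly the non-local property we want to preserve. Hence it suffices to pick $\tilde{\pi}^L$ decaying no faster than the heaviest-tailed of the three priors, so that the ratio stays bounded at infinity.

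Concretely, I would take $\tilde{\pi}^L(\bm{\theta}_k\mid\phi_k)=\prod_{i\in M_k} t(\theta_{ki}\mid\phi_k)$, a product of independent Cauchy (or Student-$t$) densities with scale $\sqrt{\tau\phi_k}$. This is positive, continuous and finite at the origin (so it is a genuine local prior) and proper, and tying its scale to $\tau\phi_k$ will make all bounds uniform in $\phi_k$. Since $\tilde{\pi}^L>0$ everywhere, $\tilde{d}=\pi/\tilde{\pi}^L$ is well defined and factorizes across coordinates. For each univariate factor I would change variables, setting $u=\tau\phi_k/\theta_{ki}^2$ for the piMOM and peMOM and $v=\theta_{ki}^2/(\tau\phi_k)$ for the pMOM. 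Up to constants this collapses the factors to $(u+1)e^{-u}$ (piMOM), $v(1+v)e^{-v/2}$ (pMOM) and $e^{-1/v}(1+v)e^{-v/2}$ (peMOM), each of which is a bounded function on $(0,\infty)$ that tends to $0$ at the sub-model boundary.

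Boundedness of each factor then follows from these explicit forms, or more abstractly from the observation that each factor is continuous on $\real\setminus\{0\}$, extends continuously by $0$ at the origin, and has a finite limit as $|\theta_{ki}|\to\infty$; a continuous function on the one-point compactification of $\real$ is bounded. Multiplying the finitely many coordinate factors gives $\tilde{d}(\bm{\theta}_k,\phi_k)\le c_k$ with $c_k=c^{\dim(\bm{\theta}_k)}$, while $\tilde{\pi}^L$ is local, which is the assertion. I expect the piMOM to be the main obstacle: its polynomial $\theta^{-2}$ tails mean a Normal $\tilde{\pi}^L$ would fail outright, since the penalty would then explode like $e^{\theta^2/(2\tau_N\phi)}$; the argument genuinely requires the heavier Cauchy/$t$ tail. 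The one point I would verify carefully is that the $\sqrt{\tau\phi_k}$ scaling keeps $c_k$ free of $\phi_k$ uniformly over the support of $\pi(\phi_k\mid M_k)$, which the substitution above delivers because the transformed variable $u$ (or $v$) absorbs $\phi_k$ entirely.
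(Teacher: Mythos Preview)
Your argument is correct and in fact coincides with the paper's treatment of the piMOM case: the paper also multiplies and divides by a Cauchy$(0,\sqrt{\tau\phi_k})$ kernel and obtains the same bounded factor $\sqrt{\pi}\,(1+\eta_i^2)\eta_i^{-2}e^{-1/\eta_i^2}$ after the substitution $\eta_i=\theta_{ki}/\sqrt{\tau\phi_k}$. Where you differ is in using the \emph{same} Cauchy local prior for all three families, whereas the paper treats each case with a tailor-made $\tilde{\pi}^L$: for the peMOM the original Normal factorization already has a bounded penalty $e^{\sqrt{2}}e^{-\tau\phi_k/\theta_{ki}^2}\le e^{\sqrt{2}}$, and for the pMOM the paper keeps a Normal local prior but inflates its dispersion to $(1+\epsilon)\tau\phi_k$, so that the new penalty picks up an extra Gaussian factor $\exp\{-\theta_{ki}^2/(2\phi_k\tau(1+\epsilon^{-1}))\}$ that kills the polynomial growth.

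Your uniform Cauchy choice is cleaner for the lemma as stated, and your reduction to the one-variable functions $(u+1)e^{-u}$, $v(1+v)e^{-v/2}$, $e^{-1/v}(1+v)e^{-v/2}$ makes boundedness and $\phi_k$-independence of $c_k$ transparent. The paper's case-by-case choices, however, are not accidental: keeping $\tilde{\pi}^L$ Normal for the pMOM lets the subsequent proof of Proposition~\ref{prop:parsimony_general} express $g_k({\bf y}_n)$ via the ratio $m^L_{k,\tau(1+\epsilon)}({\bf y}_n)/m^L_{k,\tau}({\bf y}_n)$ of two \emph{Normal}-prior marginal likelihoods, which is the quantity appearing in the pMOM hypothesis of that proposition. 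With your Cauchy $\tilde{\pi}^L$ the lemma still holds, but plugging it into that later argument would require controlling a Cauchy-prior marginal likelihood instead, so be aware that the paper's particular decomposition is chosen with downstream use in mind.
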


\begin{proof}
The result for the peMOM is direct with 
$\tilde{d}_k(\theta_{ki},\phi_k)= \prod_{i=1}^{|k|} e^{\sqrt{2}} e^{-\tau \phi / \theta_{ki}^2} \leq e^{\sqrt{2}|k|}$ and 
$\tilde{\pi}^L(\bm{\theta},\phi)= N(\bm{\theta}; {\bf 0}, \tau \phi I) \pi(\phi)$.
For the piMOM prior we multiply and divide the density by a Cauchy kernel, obtaining
\begin{align}
\pi_k^I(\theta_{ki}\mid \phi_k)&=
\frac{\sqrt{\tau \phi_k}}{\sqrt{\pi} \theta_{ki}^2} e^{-\tau\phi_k/\theta_{ki}^2}
\pi \left(1+\frac{\theta_{ki}^2}{\tau\phi_k} \right) \mbox{Cauchy}(\theta_{ki};0,\phi_k\tau) \nonumber \\
&=\tilde{d}_k(\theta_{ki},\phi_k) \mbox{Cauchy}(\theta_{ki};0,\phi_k\tau),
\label{eq:imom_as_cauchy}
\end{align}
where
$\tilde{d}_k(\theta_{ki},\phi_k)=\sqrt{\pi} \frac{\sqrt{\tau \phi_k}}{\theta_{ki}^2} e^{-\tau\phi_k/\theta_{ki}^2}
\left(1+\theta_{ki}^2/(\tau\phi_k) \right)$.
By performing a change of variables $\eta_i=\theta_{ki}/\sqrt{\tau\phi_k}$
we obtain the implied prior
$\pi_k^I(\eta_i \mid \phi_k)= \sqrt{\pi} \frac{1+\eta_i^2}{\eta_i^2} e^{-1/\eta_i^2} \pi^L(\eta_i \mid \phi_k)$.
Now, $h(\eta_i)=\sqrt{\pi} \frac{1+\eta_i^2}{\eta_i^2}e^{-1/\eta_i^2}$ is continuous,
has positive derivative for all $\eta_i>0$ and negative for $\eta_i<0$,
$\mathop {\lim }\limits_{\eta_i \to 0} h(\eta_i)=0$ 
and $\mathop {\lim }\limits_{\eta_i \to \pm \infty} h(\eta_i)=\sqrt{\pi}$,
and hence $h(\eta_i)\leq \sqrt{\pi}$.
In summary, $c_k=e^{\sqrt{2}|k|}$  for the product eMOM and $c=\pi^{|k|/2}$ for the product iMOM,
where $|k|=\mbox{dim}(\bm{\theta}_k)$.
\label{proof:bounded_penalty}

The pMOM prior density has an unbounded term
$\prod_{i \in M_k}^{}\frac{\theta_{ki}^{2r}}{(2r-1)!! \phi^r \tau^r}$,
but it can be rewritten as $\pi_k^M(\bm{\theta}_k \mid \phi_k)=$
\begin{align}
& \prod_{i \in M_k}^{}\frac{\theta_{ki}^{2r}}{(2r-1)!! \phi_k^r \tau^r} 
\frac{N(\theta_{ki}; 0, \tau \phi_k I)}{N(\theta_{ki}; 0, (1+\epsilon) \tau \phi_k I)}
N(\theta_{ki}; 0, (1+\epsilon) \tau \phi_k I)= \nonumber \\
&\prod_{i \in M_k}^{}\frac{\theta_{ki}^{2r}}{(2r-1)!! \phi_k^r \tau^r} 
\mbox{exp}\left\{ -\frac{1}{2} \frac{\theta_{ki}^2}{\phi_k\tau (1+\epsilon^{-1})}  \right\}
N(\theta_{ki}; 0, (1+\epsilon) \tau \phi_k I)= \nonumber \\
&=\prod_{i \in M_k}^{} \tilde{d}(\theta_{ki},\phi_k) N(\theta_{ki}; 0, (1+\epsilon) \tau \phi_k I)
\label{eq:mom_2xnormal}
\end{align}
for some $\epsilon \in (0,1)$, where it is straightforward to see that $\tilde{d}(\theta_{ki},\phi_k)$ is now bounded.
\end{proof}

\begin{lemma}
Let $d(\bm{\theta})$ be a continuous and differentiable function satisfying $0\leq d(\bm{\theta})<c$ for all $\bm{\theta} \in \Theta$.
Define
$$
g({\bf y}_n)= \int_{}^{}\!\, d(\bm{\theta}) \pi(\bm{\theta} \mid {\bf y}_n) d \bm{\theta},
$$
where $\mathop {\lim }\limits_{n \to \infty } \int_{\bm{\theta} \in N_{\epsilon}(A)}^{}\!\, \pi(\bm{\theta} \mid {\bf y}_n)=1$ almost surely
for any fixed $\epsilon>0$, some set $A$ and a corresponding suitably defined $\epsilon$-neighborhood $N_{\epsilon}(A)$.
If $d(\bm{\theta})= 0$ for all $\bm{\theta} \in A$ then $g({\bf y}_n) \longrightarrow 0$.
Likewise, if $d(\bm{\theta})>c'$ for all $\bm{\theta} \in A$ and some $c'>0$
then $P\left(g({\bf y}_n) \geq c'\right) \longrightarrow 1$ almost surely as $n \longrightarrow \infty$.
In particular, if $A= \{ \bm{\theta}_0 \}$ is a singleton, then $g({\bf y}_n) \longrightarrow g(\bm{\theta}_0)$.
\label{lem:penalty_conv}
\end{lemma}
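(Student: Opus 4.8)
The plan is to prove the lemma by splitting the integral defining $g({\bf y}_n)$ into the contribution from an $\epsilon$-neighborhood $N_\epsilon(A)$ of $A$ and the contribution from its complement, and then controlling each piece separately: the complement via the boundedness of $d$, and the neighborhood via the continuity of $d$ together with its prescribed values on $A$. First, for any fixed $\epsilon>0$ I would write
$$
g({\bf y}_n)= \int_{N_\epsilon(A)} d(\bm{\theta}) \pi(\bm{\theta}\mid{\bf y}_n)\,d\bm{\theta}
+ \int_{N_\epsilon(A)^c} d(\bm{\theta}) \pi(\bm{\theta}\mid{\bf y}_n)\,d\bm{\theta}.
$$
Since $0\le d(\bm{\theta})<c$, the second term is bounded above by $c\,\bigl(1-P(N_\epsilon(A)\mid{\bf y}_n)\bigr)$, which tends to $0$ almost surely by the posterior concentration hypothesis. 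Thus the tail contribution vanishes in the limit regardless of the shape of $d$, and it remains only to handle the neighborhood term.

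Next I would control the neighborhood term using continuity. For the first claim $d\equiv 0$ on $A$, so continuity (together with a suitably defined neighborhood) gives, for any $\delta>0$, a choice of $\epsilon$ with $\sup_{\bm{\theta}\in N_\epsilon(A)} d(\bm{\theta})<\delta$; the neighborhood term is then at most $\delta$, so $\limsup_n g({\bf y}_n)\le\delta$ almost surely, and letting $\delta\downarrow 0$ along a countable sequence yields $g({\bf y}_n)\to 0$ almost surely. For the second claim $d>c'$ on $A$, so for any $\delta>0$ there is an $\epsilon$ with $\inf_{\bm{\theta}\in N_\epsilon(A)} d(\bm{\theta})>c'-\delta$, whence the neighborhood term is at least $(c'-\delta)\,P(N_\epsilon(A)\mid{\bf y}_n)\to c'-\delta$ almost surely; hence $\liminf_n g({\bf y}_n)\ge c'-\delta$ almost surely, and $\delta\downarrow 0$ gives $\liminf_n g({\bf y}_n)\ge c'$, which is the asserted sense in which $g({\bf y}_n)$ stays bounded away from $0$ by $c'$. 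The singleton case $A=\{\bm{\theta}_0\}$ follows from the same decomposition with $d(\bm{\theta}_0)$ in place of $0$ or $c'$: continuity at $\bm{\theta}_0$ gives $|d(\bm{\theta})-d(\bm{\theta}_0)|<\delta$ on $N_\epsilon(\bm{\theta}_0)$, so that $|g({\bf y}_n)-d(\bm{\theta}_0)|\le\delta+c\,\bigl(1-P(N_\epsilon(\bm{\theta}_0)\mid{\bf y}_n)\bigr)$ and therefore $g({\bf y}_n)\to d(\bm{\theta}_0)$.

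The main obstacle I anticipate is the step that upgrades pointwise continuity of $d$ on $A$ to a uniform bound on $\sup_{N_\epsilon(A)} d$ (or $\inf_{N_\epsilon(A)} d$) over the \emph{entire} neighborhood, since for a non-singleton $A$ this is exactly where the phrase ``suitably defined $\epsilon$-neighborhood'' carries the weight. To make it rigorous one wants either $A$ compact, so that continuity is uniform on $A$ and extends to a neighborhood, or a problem-specific construction of $N_\epsilon(A)$ guaranteeing $\sup_{N_\epsilon(A)} d \to \sup_A d$ as $\epsilon\downarrow 0$; in the applications of this lemma $A$ is indeed a singleton or a well-behaved set, so this technicality is benign. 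Everything else—the split, the $c$-bound on the tail, and the $\delta\downarrow 0$ passage along a countable sequence to preserve the almost-sure statements—is routine.
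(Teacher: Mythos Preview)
Your proposal is correct and follows essentially the same approach as the paper: split the integral into $N_\epsilon(A)$ and its complement, bound the complement via $0\le d<c$ and posterior concentration, and control the neighborhood term via continuity of $d$ using $\sup_{N_\epsilon(A)} d$ or $\inf_{N_\epsilon(A)} d$. If anything, your treatment is slightly more careful than the paper's, since you explicitly flag the uniform-continuity issue for non-singleton $A$ and the need to pass $\delta\downarrow 0$ along a countable sequence to preserve the almost-sure conclusions, points the paper leaves implicit.
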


\begin{proof}
Consider
\begin{align}
g({\bf y}_n)= \int_{\bm{\theta} \in N_{\epsilon}(A)}^{}\!\, d(\bm{\theta}) \pi(\bm{\theta} \mid {\bf y}_n) d \bm{\theta} +
\int_{\bm{\theta} \not\in N_{\epsilon}(A)}^{}\!\, d(\bm{\theta}) \pi(\bm{\theta} \mid {\bf y}_n) d \bm{\theta}=
\label{eq:expand_mean_penalty} \\
\leq \delta_{\epsilon} P\left(\bm{\theta} \in N_{\epsilon}(A)\right) + c P\left(\bm{\theta} \not \in N_{\epsilon}(A)\right) \leq 
\delta_{\epsilon} + c P\left(\bm{\theta} \not \in N_{\epsilon}(A)\right), \nonumber
\end{align}
where $\delta_{\epsilon}=\mbox{max}_{\bm{\theta} \in N_{\epsilon}(A)} d(\bm{\theta})$ and the second term can be made arbitrarily small.
Because $d(\bm{\theta})$ is continuous, if $d(\bm{\theta})=0$ for all $\bm{\theta} \in A$ then $\delta_{\epsilon}$ can also be made
arbitrarily small a.s. as $n \longrightarrow \infty$, and hence $g({\bf y}_n) \longrightarrow 0$.
Suppose now that $d(\bm{\theta})>c'$ for all $\bm{\theta} \in A$, then from (\ref{eq:expand_mean_penalty})
\begin{align}
g({\bf y}_n) > \int_{\bm{\theta} \in N_{\epsilon}(A)}^{}\!\, d(\bm{\theta}) \pi(\bm{\theta} \mid {\bf y}_n) d \bm{\theta} \geq
\delta_{\epsilon}' \int_{\bm{\theta} \in N_{\epsilon}(A)}^{}\!\, \pi(\bm{\theta} \mid {\bf y}_n) d \bm{\theta},
\label{eq:expand_mean_penalty2}
\end{align}
where due to continuity $\delta_{\epsilon}'=\mbox{min}_{\bm{\theta} \in N_{\epsilon}(A)} d(\bm{\theta})$ can be made arbitrarily close to $c'$
for small enough $\epsilon$ and the integral on the right hand side of (\ref{eq:expand_mean_penalty2}) can be made arbitrarily
close to 1 as $n \longrightarrow \infty$.
The proof for when $A=\{ \bm{\theta}_0 \}$ follows as an immediate implication.
\end{proof}

\underline{{\bf Proof of Proposition \ref{prop:parsimony_general}.}}
Part (i) follows from direct algebraic manipulation
\begin{align}
m_k({\bf y}_n)= \int_{}^{}\!\, \int_{}^{}\!\,
f_k({\bf y}_n \mid \bm{\theta}_k,\phi_k) d_k(\bm{\theta}_k,\phi_k) \pi^L(\bm{\theta}_k,\phi_k \mid M_k) d\bm{\theta}_k d\phi_k= \nonumber \\
\int_{}^{}\!\, \int_{}^{}\!\,
d_k(\bm{\theta}_k,\phi_k) \frac{f_k({\bf y}_n \mid \bm{\theta}_k,\phi_k) 
\pi^L(\bm{\theta}_k,\phi_k \mid M_k)}{m_k^L({\bf y}_n)}m_k^L({\bf y}_n) d\bm{\theta}_k d\phi_k= \nonumber \\
m_k^L({\bf y}_n) \int_{}^{}\!\, \int_{}^{}\!\,
d_k(\bm{\theta}_k,\phi_k) \pi^L(\bm{\theta}_k,\phi_k \mid {\bf y}_n, M_k) d\bm{\theta}_k,\phi_k=
m_k^L({\bf y}_n) g_k({\bf y}_n), \nonumber
\end{align}
as desired. In a slight abuse of notation, in the derivation above $d \bm{\theta}_k$ and $d\phi_k$
indicate integration with respect to the corresponding $\sigma$-finite dominating measures.

For Part (ii) we use Lemma \ref{lem:bounded_penalty}, which states that the piMOM and peMOM priors
can be written as $d_k(\bm{\theta}_k,\phi_k) \pi^L(\bm{\theta}_k,\phi_k)$ with bounded $d_k(\bm{\theta}_k,\phi_k)$, so that
\begin{align}
g_k({\bf y}_n)= \int_{}^{}\!\, \int_{}^{}\!\, 
d_k(\bm{\theta}_k,\phi_k) \frac{f_k({\bf y}_n \mid \bm{\theta}_k,\phi_k) \pi^L(\bm{\theta}_k,\phi_k)}{m_k^L({\bf y}_n)}
d \bm{\theta}_k d\phi_k
\label{eq:gk_proof_emom}
\end{align}
where by assumption $f_k({\bf y}_n \mid \bm{\theta}_k^*,\phi_k^*)/f_k({\bf y}_n \mid \tilde{\bm{\theta}}_k,\tilde{\phi}_k) \rightarrow \infty$
almost surely as $n \rightarrow \infty$
for any $(\bm{\theta}_k^*,\phi_k^*) \in A$ and $(\tilde{\bm{\theta}}_k,\tilde{\phi}_k) \not\in A$.
See {\it e.g.} \citeasnoun{redner:1981} for such MLE consistency under general settings.
We note that $\pi^L(\bm{\theta}_k,\phi_k)$ associated to either pMOM, piMOM or peMOM priors
are products of independent Normal or Cauchy kernels assigning strictly positive density to any $\bm{\theta}_k \in \Theta_k$,
which combined with MLE consistency guarantee that the limiting posterior concentrates arbitrarily large probability on 
any $\epsilon$ neighborhood of $A$ as $n \rightarrow \infty$ \cite{ghosal:2002}.
Part (ii) follows from Lemma \ref{lem:penalty_conv}.
For the pMOM prior, from Lemma \ref{lem:bounded_penalty} $g_k({\bf y}_n)=$
\begin{align}
\int_{}^{}\!\, \int_{}^{}\!\, 
\tilde{d}_k(\bm{\theta}_k,\phi_k) \frac{f_k({\bf y}_n \mid \bm{\theta}_k,\phi_k) N(\bm{\theta}_k;{\bf 0},\tau \phi_k (1+\epsilon) I)}{m_{k,\tau}^L({\bf y}_n)}
d \bm{\theta}_k d\phi_k= \nonumber \\
 \frac{m_{k,\tau(1+\epsilon)}^L({\bf y}_n)}{m_{k,\tau}^L({\bf y}_n)} 
\int_{}^{}\!\, \int_{}^{}\!\, \tilde{d}_k(\bm{\theta}_k,\phi_k) \pi_{\tau(1+\epsilon)}(\bm{\theta}_k,\phi_k \mid {\bf y}_n)
d \bm{\theta}_k d\phi_k,
\label{eq:gk_proof_mom}
\end{align}
where $\epsilon \in (0,1)$, $\tilde{d}_k(\bm{\theta}_k,\phi_k)$ is bounded and $m_{k,\tau}({\bf y}_n)$ is the integrated likelihood
under a $N(\bm{\theta}_k;{\bf 0},(1+\epsilon)\tau\phi_kI)$ prior.
Part (ii) follows from Lemma \ref{lem:penalty_conv}, which guarantees convergence for the integral in (\ref{eq:gk_proof_mom}),
and that by assumption $m_{k,\tau(1+\epsilon)}^L({\bf y}_n)/m_{k,\tau}^L({\bf y}_n) \rightarrow c \in (0,\infty)$ almost surely as $n \rightarrow \infty$.
We note that from Bayes theorem
\begin{align}
\frac{m_{k,\tau(1+\epsilon)}^L({\bf y}_n)}{m_{k,\tau}^L({\bf y}_n)}=
\frac{\pi_{(1+\epsilon)\tau}^L(\bm{\theta}_k,\phi_k \mid {\bf y}_n)}{\pi_{\tau}^L(\bm{\theta}_k,\phi_k \mid {\bf y}_n)}
\frac{\pi_{\tau}^L(\bm{\theta}_k,\phi_k)}{\pi_{(1+\epsilon)\tau}^L(\bm{\theta}_k,\phi_k)}
\label{eq:ratio_mlhood_pmom}
\end{align}
for any $(\bm{\theta}_k,\phi_k)$, where the second term in the right hand side is bounded ({\it e.g.} for $\bm{\theta}_k={\bf 0}$).
The first term is the ratio of posterior densities under $N(\bm{\theta}; {\bf 0}, (1+\epsilon)\tau\phi_kI)$ and $N(\bm{\theta}; {\bf 0}, \tau\phi_kI)$,
which for limiting normal posterior distributions with bounded covariance eigenvalues converges in probability to a bounded constant.

In the particular case where the data-generating density $f^*({\bf y}_n)$ belongs to the set of considered models,
{\it i.e.} $f^*({\bf y}_n)=f_t({\bf y}_n \mid \bm{\theta}_t^*,\phi_t^*)$ for some $t \in \{1,\ldots,K\}$.
By definition of NLP we obtain that $d_k(\bm{\theta}_k^*,\phi_k^*)=0$ if and only if $M_t \subset M_k$.
Therefore, $M_t \subset M_k$ implies that $g_k({\bf y}_n) \stackrel{P}{\longrightarrow} 0$
and $M_t \not\subset M_k$ implies 
$P\left( g_k({\bf y}_n) \geq c\right) \longrightarrow 1$ for some constant $c>0$.
We note that for non-identifiable models the set $A$ is no longer a singleton,
but when $M_t \subset M_k$ by definition $d_k(\bm{\theta}_k,\phi_k)=0$ for all $(\bm{\theta}_k,\phi_k) \in A$,
hence we still obtain $g_k({\bf y}_n) \stackrel{P}{\longrightarrow} 0$.
Also by NLP definition, when $M_t \not\subset M_k$ then $d_k(\bm{\theta}_k,\phi_k)>0$ for all $(\bm{\theta}_k,\phi_k) \in A$,
which implies $g_k({\bf y}_n) \stackrel{P}{\longrightarrow} c>0$.

\subsection{Proof of Proposition \ref{prop:parsimony_lm}}

We start by stating two lemmas.

\begin{lemma}
Let ${\bf y}_n \sim N(X_{k,n} \bm{\theta}_k, \phi_k)$ be a linear model as in Proposition \ref{prop:parsimony_lm},
and consider a NLP $\pi(\bm{\theta}_k \mid \phi_k)= d_k(\bm{\theta}_k,\phi_k) \pi^L(\bm{\theta}_k \mid \phi_k)$.
Assume that the NLP penalty takes the product form 
$d_k(\bm{\theta}_k,\phi_k)=\prod_{i \in M_k}^{} d(\theta_{ki},\phi_k)$,
where 
$d_k(\theta_{ki},\phi_k)= \frac{\theta_{ki}^{2r}}{(2r-1)!!(\tau \phi_k)^r}$
is either the MOM penalty
or $d_k(\theta_{ki},\phi_k) \leq c$ for all $(\theta_{ki},\phi_k)$ and some constant $c$,
as in the eMOM or iMOM penalties.
Then $g_k({\bf y}_n)$ is a continuous function of
${\bf s}_{k,n}=({\bf m}_{k,n}, X_{k,n}'X_{k,n}, \hat{\phi}_{k,n})$.
\label{lem:gcont_nlps}
\end{lemma}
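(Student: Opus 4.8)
The plan is to prove the claim in two stages: first that $g_k({\bf y}_n)$ depends on the data only through ${\bf s}_{k,n}=({\bf m}_{k,n},X_{k,n}'X_{k,n},\hat{\phi}_{k,n})$, and then that the resulting map is continuous. Under the Normal linear model the LP posterior factors as $\pi^L(\bm{\theta}_k,\phi_k \mid {\bf y}_n)=\pi^L(\bm{\theta}_k \mid \phi_k,{\bf y}_n)\,\pi^L(\phi_k \mid {\bf y}_n)$, where $\pi^L(\bm{\theta}_k \mid \phi_k,{\bf y}_n)=N(\bm{\theta}_k;{\bf m}_{k,n},\phi_k S_{k,n}^{-1})$ and, integrating the Normal likelihood against the $N({\bf 0},\tau\phi_k I)$ prior, $\pi^L(\phi_k \mid {\bf y}_n)\propto m_k^L({\bf y}_n \mid \phi_k)\pi(\phi_k)$ with $m_k^L({\bf y}_n \mid \phi_k)=N({\bf y}_n;{\bf 0},\phi_k(I+\tau X_{k,n}X_{k,n}'))$. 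By the Woodbury identity the quadratic form in the exponent of this marginal likelihood and its normalizing determinant depend on ${\bf y}_n$ only through $s_R^2=n\hat{\phi}_{k,n}$, ${\bf m}_{k,n}$ and $X_{k,n}'X_{k,n}$, all contained in ${\bf s}_{k,n}$. Hence $\pi^L(\phi_k \mid {\bf y}_n)$, and therefore $g_k$, are functions of ${\bf s}_{k,n}$ alone, and it only remains to prove continuity.

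Next I would write $g_k$ as the iterated integral $g_k=\int_0^\infty G({\bf s}_{k,n},\phi_k)\,\pi^L(\phi_k \mid {\bf y}_n)\,d\phi_k$, where $G({\bf s}_{k,n},\phi_k)=\int d_k(\bm{\theta}_k,\phi_k)\,N(\bm{\theta}_k;{\bf m}_{k,n},\phi_k S_{k,n}^{-1})\,d\bm{\theta}_k$ is the inner penalty mean. Standardizing the Gaussian via $\bm{\theta}_k={\bf m}_{k,n}+\sqrt{\phi_k}\,L_{k,n}{\bf z}$, with $L_{k,n}L_{k,n}'=S_{k,n}^{-1}$ and ${\bf z}\sim N({\bf 0},I)$, gives $G({\bf s}_{k,n},\phi_k)=E_{\bf z}\big[d_k({\bf m}_{k,n}+\sqrt{\phi_k}\,L_{k,n}{\bf z},\phi_k)\big]$. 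For the bounded penalty $d_k\leq c^{|k|}$ the integrand is continuous in $({\bf s}_{k,n},\phi_k)$ for almost every ${\bf z}$ and dominated by the constant $c^{|k|}$, so dominated convergence yields joint continuity of $G$. For the MOM penalty $G$ is an explicit polynomial Gaussian moment, i.e. a rational function of ${\bf m}_{k,n}$, the entries of $S_{k,n}^{-1}$ and $\phi_k$, continuous wherever $\phi_k>0$ and $S_{k,n}\succ 0$ (guaranteed by the eigenvalue conditions).

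Finally I would establish continuity of the outer integral in ${\bf s}_{k,n}$ by a second dominated-convergence argument, using that both $G$ and $\pi^L(\phi_k \mid {\bf y}_n)$ are jointly continuous in $(\phi_k,{\bf s}_{k,n})$. On a small compact neighborhood of a given ${\bf s}_{k,n}$ the prefactors, exponents and normalizing constant of the $\phi_k$-posterior stay within fixed bounds, so the integrand admits a common integrable envelope and the limit can be passed under the integral sign, which delivers continuity of $g_k$ as a function of ${\bf s}_{k,n}$.

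I expect the main obstacle to be constructing this envelope for the outer integral, specifically controlling the two tails in $\phi_k$. As $\phi_k\to 0$ the MOM factor $(\tau\phi_k)^{-r|k|}$ blows up, but it is dominated by the $\exp\{-(s_R^2+{\bf m}_{k,n}'S_{k,n}{\bf m}_{k,n}/\tau)/(2\phi_k)\}$ decay carried by $m_k^L({\bf y}_n \mid \phi_k)$; as $\phi_k\to\infty$ the $\phi_k^{-n/2}$-type decay of the marginal likelihood absorbs the at-most-polynomial growth of $G$. Because these exponents and prefactors vary continuously with ${\bf s}_{k,n}$, they can be bounded uniformly on the neighborhood, which is exactly what the dominated-convergence step requires.
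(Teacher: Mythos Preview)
Your argument is correct and, for the bounded-penalty case, essentially identical to the paper's: both use that $c^{-|k|}d_k\le 1$ together with the fact that the LP posterior is a probability density to justify a dominated-convergence step. The paper applies DCT in one shot to the joint $(\bm{\theta}_k,\phi_k)$ integral rather than splitting into inner and outer layers, but this is a cosmetic difference; your more explicit two-stage envelope construction is arguably cleaner about what the dominating function actually is.

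The genuine divergence is in the MOM case. You handle the unbounded polynomial penalty directly: the inner integral is a Gaussian polynomial moment (hence rational in the sufficient statistics), and for the outer integral you absorb the $(\tau\phi_k)^{-r|k|}$ blow-up near zero into the exponential decay of the marginal likelihood. The paper instead avoids the unbounded penalty altogether by invoking the rewriting trick of Lemma~\ref{lem:bounded_penalty}: multiply and divide by a $N(0,2\tau\phi_k)$ kernel so that the effective penalty becomes $\theta_{ki}^{2r}e^{-\theta_{ki}^2/(4\tau\phi_k)}/(\tau\phi_k)^r$, which \emph{is} bounded, at the cost of an extra factor $m_{k,2\tau}^L({\bf y}_n)/m_{k,\tau}^L({\bf y}_n)$ that is manifestly continuous in ${\bf s}_{k,n}$. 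This reduces the MOM case to the already-proved bounded case. (The paper also remarks that for the specific choice $\phi_k\sim\mbox{IG}$, results of Johnson and Rossell and of Kan give $g_k$ as an explicit $T$-moment, hence continuous, but the $2\tau$ trick is what covers general $\pi(\phi_k)$.) Your route is more self-contained and makes the tail control explicit; the paper's route is shorter once Lemma~\ref{lem:bounded_penalty} is in hand and sidesteps the small-$\phi_k$ envelope entirely.
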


\begin{proof}

To prove the result for bounded penalties $d(\theta_{ki},\phi_k) \leq c$
recall that ${\bf s}_{k,n}$ is sufficient under $M_k$
and hence we may write $g_k({\bf y}_n)=g_k({\bf s}_{k,n})=$
\begin{align}
\int_{}^{}\!\, \int_{}^{}\!\, \prod_{i \in M_k}^{} d_k(\bm{\theta}_{ki},\phi_k) \pi^L(\bm{\theta}_k \mid {\bf m}_{k,n}, X_{k,n}'X_{k,n}, \phi_k, M_k) \nonumber \\
\pi^L(\phi_k \mid {\bf m}_{k,n}, X_{k,n}'X_{k,n}, \hat{\phi}_{k,n}, M_k)
d \bm{\theta}_k d\phi_k \leq \nonumber \\
\int_{}^{}\!\, \int_{}^{}\!\, c^{|k|} \pi^L(\bm{\theta}_k \mid {\bf m}_{k,n}, X_{k,n}'X_{k,n}, \phi_k, M_k) \nonumber \\
\pi^L(\phi_k \mid {\bf m}_{k,n}, X_{k,n}'X_{k,n}, \hat{\phi}_{k,n}, M_k)
d \bm{\theta}_k d\phi_k= c^{|k|}.
\end{align}
Now,
letting ${\bf z}=({\bf z}_1,X_{k,n}'X_{k,n},z_2) \to ({\bf m}_{k,n},X_{k,n}'X_{k,n},\hat{\phi}_{k,n})$ 
and using the Dominated Convergence Theorem we obtain
\begin{align}
\mathop {\lim }\limits_{{\bf z} \to {\bf s}_{k,n}} g_k(z) c^{-|k|}=
\int_{}^{}\!\, \int_{}^{}\!\, \prod_{i \in M_k}^{} c^{-1} d_k(\bm{\theta}_{ki},\phi_k) \pi^L(\bm{\theta}_k \mid {\bf z}_1, X_{k,n}'X_{k,n}, \phi_k, M_k) \nonumber \\
\pi^L(\phi_k \mid {\bf z}_1, X_{k,n}'X_{k,n}, z_2, M_k) d \bm{\theta}_k d\phi_k= \nonumber \\
\int_{}^{}\!\, \int_{}^{}\!\, 
\prod_{i \in M_k}^{} c^{-1} d_k(\bm{\theta}_{ki},\phi_k) \pi^L(\bm{\theta}_k \mid {\bf m}_{k,n}, X_{k,n}'X_{k,n}, \phi_k, M_k) \nonumber \\
\pi^L(\phi_k \mid {\bf m}_{k,n}, X_{k,n}'X_{k,n}, \hat{\phi}_{k,n}, M_k) d \bm{\theta}_k d\phi_k,
\end{align}
and hence
\begin{align}
\mathop {\lim }\limits_{{\bf z} \to {\bf s}_{k,n}} g_k(z)=  
\int_{}^{}\!\, \int_{}^{}\!\, 
\prod_{i \in M_k}^{} d_k(\bm{\theta}_{ki},\phi_k) \pi^L(\bm{\theta}_k \mid {\bf m}_{k,n}, X_{k,n}'X_{k,n}, \phi_k, M_k) \nonumber \\
\pi^L(\phi_k \mid {\bf m}_{k,n}, X_{k,n}'X_{k,n}, \hat{\phi}_{k,n}, M_k) d \bm{\theta}_k d\phi_k,
\end{align}
showing that $g_k({\bf s}_{k,n})$ is continuous.

Consider now the MOM prior case.
For the particular prior choice $\phi_k \sim \mbox{IG}(\alpha,\lambda)$,
\citeasnoun{johnson:2012} showed that
$g_k({\bf s}_{k,n})=E \left(\prod_{i \in M_k}^{} \theta_{ki}^{2r} \right)$ where
$\bm{\theta}_k \sim T_{\nu}({\bf m}_{k,n}, V_{k,n})$,
with $\nu=2r|k|+n+\alpha$ and
$V_{k,n}=S_{k,n} \nu / (\lambda + {\bf y}_n' {\bf y}_n - {\bf y}_n'X_{k,n}{\bf m}_{k,n}$.
\citeasnoun{kan:2008} gave explicit expressions for such products as a sum of continuous
functions, and hence $g_k({\bf s}_{k,n})$ is continuous.
Lemma \ref{lem:bounded_penalty} ensures that the pMOM penalty is also bounded
for more general priors $\pi_k(\phi_k)$.
Therefore, $g_k({\bf s}_{k,n})=$
\begin{align}
\int_{}^{}\!\, \int_{}^{}\!\, \prod_{i \in M_k}^{} d(\theta_{ki},\phi_k) 
\frac{N({\bf y}_n; X_{k,n} \bm{\theta}_k; \phi_k I) N(\bm{\theta}_k; {\bf 0}, 2\tau \phi_k I)}
{m_{k,\tau}^L({\bf y}_n)} \pi_k(\phi_k)
d\bm{\theta}_k d\phi_k= \nonumber \\
\frac{m_{k,2\tau}^L({\bf y}_n)}{m_{k,\tau}^L({\bf y}_n)}
\int_{}^{}\!\, \int_{}^{}\!\, \prod_{i \in M_k}^{} d(\theta_{ki},\phi_k) 
\pi_{k,2\tau}^L(\bm{\theta}_k \mid \phi_k, {\bf s}_n)
\pi_k(\phi_k) d\bm{\theta}_k d\phi_k
\label{eq:mom_2xnormal_cont}
\end{align}
where $m_{k,\tau}^L({\bf y}_n)$ is the integrated likelihood with respect to
$N(\bm{\theta}_k; {\bf 0}, \tau \phi_k I)$
and
$\pi_{k,2\tau}^L(\bm{\theta}_k \mid \phi_k, {\bf s}_n)$ is the Normal posterior implied by
the $N(\bm{\theta}_k; {\bf 0}, 2\tau\phi I)$ prior.
Because $d(\theta_{ki},\phi_k) \leq c$ for some constant $c$,
the Dominated Convergence Theorem gives that
\begin{align}
\mathop {\lim }\limits_{{\bf z} \to {\bf s}_{k,n}} g_k({\bf z})
\frac{m_{k,\tau}^L({\bf y}_n)}{m_{k,2\tau}^L({\bf y}_n)}=
\int_{}^{}\!\, \int_{}^{}\!\, \prod_{i \in M_k}^{} d(\theta_{ki},\phi_k) 
\pi_{k,2\tau}^L(\bm{\theta}_k \mid \phi_k, {\bf s}_n)
\pi_k(\phi_k) d\bm{\theta}_k d\phi_k,
\label{eq:glim_mom}
\end{align}
so that direct algebraic manipulation after adding the integrated likelihood terms delivers
\begin{align}
\mathop {\lim }\limits_{{\bf z} \to {\bf s}_{k,n}} g_k({\bf z})=
\int_{}^{}\!\, \int_{}^{}\!\, \prod_{i \in M_k}^{} \frac{\theta_{ki}^{2r}}{(2r-1)!! \phi^r \tau^r}
\pi_{k,\tau}^L(\bm{\theta}_k \mid \phi_k, {\bf s}_n)
\pi_k(\phi_k) d\bm{\theta}_k d\phi_k,
\label{eq:glim_mom2}
\end{align}
which proves that $g_k({\bf s}_{k,n})$ is continuous.
\label{proof:gcont_nlps}
\end{proof}

\begin{lemma}
Let $d_k(\bm{\theta}_k,\phi_k)$ be as in Lemma \ref{lem:gcont_nlps}
and $c_n=$
\begin{align}
\int_{}^{}\!\, \int_{}^{}\!\, &d_k(\bm{\theta}_k,\phi_k)
\pi_k^L(\bm{\theta}_k \mid \bm{\theta}_k^*, X_{k,n}'X_{k,n}, \phi_k)
\pi_k^L(\phi_k \mid \bm{\theta}_k^*, X_{k,n}'X_{k,n}, \phi_k^*)
d\bm{\theta}_k d\phi_k
\label{eq:cn_lemma}
\end{align}
as in (\ref{eq:gconv_to_cn}).
Then 
\begin{align}
\mathop {\lim }\limits_{n \to \infty} c_n=
\int_{}^{}\!\, \int_{}^{}\!\, &d_k(\bm{\theta}_k,\phi_k) \mathop {\lim }\limits_{n \to \infty} 
\pi_k^L(\bm{\theta}_k \mid \bm{\theta}_k^*, X_{k,n}'X_{k,n}, \phi_k) \nonumber \\
&\pi_k^L(\phi_k \mid \bm{\theta}_k^*, X_{k,n}'X_{k,n}, \phi_k^*)
d\bm{\theta}_k d\phi_k
\label{eq:cn_lemma2}
\end{align}
\label{lem:cn_convergence}
\end{lemma}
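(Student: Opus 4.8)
The plan is to justify interchanging the limit and the double integral defining $c_n$ via the Dominated Convergence Theorem, after a change of variables that keeps the concentrating Normal posteriors from degenerating. First I would record the behaviour of the conditional posteriors as $n \to \infty$. Under the eigenvalue conditions of Proposition \ref{prop:parsimony_lm}, the Normal posterior $\pi_k^L(\bm{\theta}_k \mid \bm{\theta}_k^*, X_{k,n}'X_{k,n}, \phi_k)$ has mean ${\bf m}_{k,n} \to \bm{\theta}_k^*$ and covariance $\phi_k S_{k,n}^{-1}$ whose eigenvalues are of exact order $n^{-1}$, while $\pi_k^L(\phi_k \mid \cdots, \phi_k^*)$ concentrates at $\phi_k^* > 0$. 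A naive pointwise limit of the integrand in the original coordinates is useless (the densities vanish almost everywhere), so the essential device is to standardise.

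For the inner integral over $\bm{\theta}_k$ at fixed $\phi_k$, I would write $\bm{\theta}_k = {\bf m}_{k,n} + A_n {\bf u}$ with $A_n A_n' = \phi_k S_{k,n}^{-1}$ and ${\bf u} \sim N({\bf u};{\bf 0},I)$, turning the integral into $\int d_k({\bf m}_{k,n} + A_n {\bf u}, \phi_k)\, N({\bf u};{\bf 0},I)\, d{\bf u}$. Since $A_n \to {\bf 0}$ and ${\bf m}_{k,n} \to \bm{\theta}_k^*$, the argument of $d_k$ converges, so $d_k({\bf m}_{k,n} + A_n {\bf u}, \phi_k) \to d_k(\bm{\theta}_k^*, \phi_k)$ pointwise in ${\bf u}$. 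For the bounded penalties (iMOM, eMOM) Lemma \ref{lem:bounded_penalty} gives $d_k \le c^{|k|}$, so $c^{|k|} N({\bf u};{\bf 0},I)$ dominates and DCT applies at once. For the MOM penalty, $d_k({\bf m}_{k,n} + A_n{\bf u}, \phi_k)$ is a polynomial of degree $2r|k|$ in ${\bf u}$ whose coefficients are continuous in $({\bf m}_{k,n}, A_n)$ and hence uniformly bounded for large $n$; this yields the $n$-free dominating function $C(1 + \|{\bf u}\|^{2r|k|}) N({\bf u};{\bf 0},I)$, which is integrable, so DCT again applies. Equivalently, the inner integral is a Gaussian polynomial moment which, by the explicit formulas of \citeasnoun{kan:2008}, is a continuous function of the mean and covariance and therefore converges.

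Having passed the limit through the ${\bf u}$-integral, I would treat the outer $\phi_k$-integral. The resulting limit integrand $d_k(\bm{\theta}_k^*, \phi_k)$ is continuous in $\phi_k$ and, for the MOM penalty, grows only like $\phi_k^{-r}$; because $\pi_k^L(\phi_k \mid \cdots, \phi_k^*)$ concentrates at $\phi_k^* > 0$, its mass eventually avoids any neighbourhood of the origin, so the same argument (restricting to $\phi_k$ bounded away from $0$ with probability tending to one) lets me pass the limit in $\phi_k$ as well. Combining the two steps gives $\lim_{n\to\infty} c_n = \int\int d_k(\bm{\theta}_k, \phi_k)\, \lim_{n\to\infty} \pi_k^L(\bm{\theta}_k \mid \cdots)\, \pi_k^L(\phi_k \mid \cdots)\, d\bm{\theta}_k\, d\phi_k = d_k(\bm{\theta}_k^*, \phi_k^*)$, as claimed.

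The main obstacle is the MOM penalty: being unbounded, weak convergence of the posteriors to a point mass is by itself insufficient, and the argument hinges on producing an $n$-independent integrable dominating function. The change of variables to the standardised coordinate ${\bf u}$ is what makes this possible, converting the concentrating Normal into a fixed standard Normal and reducing the task to the uniform boundedness of finitely many polynomial coefficients; controlling the $\phi_k^{-r}$ singularity at $\phi_k=0$ through the concentration of the variance posterior away from the origin is the only other delicate point.
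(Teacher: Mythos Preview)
Your proposal is correct and, for the bounded eMOM and iMOM penalties, coincides with the paper's argument (bounded integrand against a sequence of posterior probability measures concentrating at a point). For the unbounded pMOM penalty you take a genuinely different route. The paper handles the pMOM case by reusing the device of Lemma~\ref{lem:bounded_penalty}: it multiplies and divides by a wider Normal kernel $N(\bm{\theta}_k;{\bf 0},2\tau\phi_k I)$, thereby absorbing the polynomial factor $\prod_i \theta_{ki}^{2r}$ into a \emph{bounded} penalty $\tilde d(\theta_{ki},\phi_k)$, and then runs the same dominated-convergence step as in the bounded case, carrying along a ratio $c_\phi^*(X_{k,n},\tau)/c_\phi^*(X_{k,n},2\tau)$ of normalising constants that is undone algebraically at the end. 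You instead keep the original penalty and standardise the concentrating Normal via $\bm{\theta}_k={\bf m}_{k,n}+A_n{\bf u}$, reducing the inner integral to a polynomial with uniformly bounded coefficients against a fixed $N({\bf 0},I)$, for which $C(1+\|{\bf u}\|^{2r|k|})N({\bf u};{\bf 0},I)$ is an $n$-free dominating function. Your method is more direct for the $\bm{\theta}_k$-integral and sidesteps the $2\tau$ trick entirely, but the price is that the $\phi_k^{-r|k|}$ singularity at $\phi_k=0$ must be handled separately via the concentration of the $\phi_k$-posterior away from zero; the paper's rewritten penalty is globally bounded in $(\bm{\theta}_k,\phi_k)$ and so treats both integrals in one stroke. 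Both approaches lead to the same conclusion.
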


\begin{proof}
The proof runs analogous to that in Lemma \ref{lem:gcont_nlps},
except that now the limit is taken with respect to $n$
and the $c^{-|k|}$ term may grow as $n \to \infty$.
That is, for bounded $d(\theta_{ki},\phi_k)$ the argument proceeds by
using the Dominated Convergence Theorem to obtain
$\mathop {\lim }\limits_{n \to \infty} c_n c^{-|k|}=$
\begin{align}
\int_{}^{}\!\, \prod_{i \in M_k}^{} c^{-1} d_k(\theta_{ki},\phi_k)
\mathop {\lim }\limits_{n \to \infty}
\pi_k^L(\bm{\theta}_k \mid \bm{\theta}_k^*, X_{k,n}'X_{k,n}, \phi_k)
\pi_k^L(\phi_k) d\phi_k,
\end{align}
so that 
\begin{align}
\mathop {\lim }\limits_{n \to \infty} c_n=
\int_{}^{}\!\, \prod_{i \in M_k}^{} d_k(\theta_{ki},\phi_k)
\mathop {\lim }\limits_{n \to \infty}
\pi_k^L(\bm{\theta}_k \mid \bm{\theta}_k^*, X_{k,n}'X_{k,n}, \phi_k)
\pi_k^L(\phi_k) d\phi_k.
\end{align}

For the MOM prior we adjust the argument slightly.
From (\ref{eq:hn}) we obtain $c_n=$
\begin{align}
\int_{}^{}\!\, \int_{}^{}\!\, &\prod_{i \in M_k}^{} \frac{\theta_{ki}^{2r}}{(2r-1)!! \tau^2 \phi^2}
N(\bm{\theta}_k; \bm{\theta}_k^*, \phi_k(X_{k,n}'X_{k,n})^{-1})
\mbox{IG}\left( \phi_k; \frac{n}{2}, \frac{n\phi_k^*}{2} \right) \nonumber \\
&c_{\phi}^*(X_{k,n},\tau) N(\bm{\theta}_k; 0, \tau \phi_k I) \pi_k(\phi_k) d\bm{\theta}_k^* d\phi_k,
\label{eq:cn_momprior}
\end{align}
where $c_{\phi}^*(X_{k,n},\tau)=c_{\phi}({\bf s}_{k,n})$ in (\ref{eq:penalty_as_funsuffstat2})
plugging in $\bm{\theta}_{k,n}=\bm{\theta}_k^*$, $\hat{\phi}_k=\phi_k^*$.
Following the same argument as in the proof of Lemma \ref{lem:gcont_nlps},
we divide and multiply by a $N(\bm{\theta}_k; {\bf 0}, 2\tau)$ kernel to obtain

\begin{align}
c_n=\int_{}^{}\!\, \int_{}^{}\!\, \prod_{i \in M_k}^{} d(\theta_{ki},\phi_k)
N(\bm{\theta}_k; \bm{\theta}_k^*, \phi_k(X_{k,n}'X_{k,n})^{-1})
N(\bm{\theta}_k; 0, 2\tau \phi_k I) \nonumber \\
 c_{\phi}^*(X_{k,n},\tau) \mbox{IG}\left( \phi_k; \frac{n}{2}, \frac{n\phi_k^*}{2} \right)
\pi_k(\phi_k) d\bm{\theta}_k^* d\phi_k= \nonumber \\
\frac{c_{\phi}^*(X_{k,n},\tau)}{c_{\phi}^*(X_{k,n},2\tau)} 
\int_{}^{}\!\, \int_{}^{}\!\, \prod_{i \in M_k}^{} d(\theta_{ki},\phi_k)
\frac{N(\bm{\theta}_k; {\bf m}_{2\tau}, S_{2\tau})}{c_{\theta}^*(\phi,X_{k,n},2\tau)} \nonumber \\
c_{\phi}^*(X_{k,n},2\tau) \mbox{IG}\left( \phi_k; \frac{n}{2}, \frac{n\phi_k^*}{2} \right) 
 \pi_k(\phi_k) d\bm{\theta}_k^* d\phi_k,
\label{eq:cn_momprior2}
\end{align}
where 
$d(\theta_{ki},\phi_k)=\frac{\theta_{ki}^{2r} N(\theta_{ki};0,\tau\phi_k)}{(2r-1)!! \tau^2 \phi^2 N(\theta_{ki};0,2\tau\phi_k)} \leq c$
for some constant $c$,
$S_{2\tau}= X_{n,k}'X_{n,k} + (2\tau)^{-1}I$,
${\bf m}_{2\tau}= S_{2\tau}^{-1}(X_{k,n}'X_{k,n}) \bm{\theta}_k^*$,
and 
$1/c_{\theta}^*(\phi,X_{k,n},2\tau)=
\int_{}^{}\!\, N(\bm{\theta}_k; \bm{\theta}_k^*, \phi_k(X_{k,n}'X_{k,n})^{-1})
N(\bm{\theta}_k; 0, 2\tau \phi_k I) d\bm{\theta}_k$.
Now, because $d(\theta_{ki},\phi_k)$ is bounded and the remaining expression
in (\ref{eq:cn_momprior2}) is a probability density function on $(\bm{\theta}_k,\phi_k)$,
the Dominated Convergence Theorem gives
\begin{align}
\mathop {\lim }\limits_{n \to \infty} c_n
\frac{c_{\phi}^*(X_{k,n},2\tau)}{c_{\phi}^*(X_{k,n},\tau)} c^{-|k|/2}=
\int_{}^{}\!\, \int_{}^{}\!\, \mathop {\lim }\limits_{n \to \infty}
\prod_{i \in M_k}^{} d(\theta_{ki},\phi_k)
\frac{N(\bm{\theta}_k; {\bf m}_{2\tau}, S_{2\tau})}{c_{\theta}^*(\phi,X_{k,n},2\tau)} \nonumber \\
c_{\phi}^*(X_{k,n},2\tau) \mbox{IG}\left( \phi_k; \frac{n}{2}, \frac{n\phi_k^*}{2} \right) 
 \pi_k(\phi_k) d\bm{\theta}_k^* d\phi_k,
\label{eq:limcn_momprior}
\end{align}
which after rearranging terms gives
\begin{align}
\mathop {\lim }\limits_{n \to \infty} c_n=
\int_{}^{}\!\, \int_{}^{}\!\, \mathop {\lim }\limits_{n \to \infty}
\prod_{i \in M_k}^{} \frac{\theta_{ki}^{2r}}{\tau^2\phi_k^2 (2r-1)!!}
N(\bm{\theta}_k; {\bf m}_{k,n}, \phi_k S_{k,n}^{-1}) \nonumber \\
c_{\phi}(X_{k,n},\tau) \mbox{IG}\left( \phi_k; \frac{n}{2}, \frac{n\phi_k^*}{2} \right) 
 \pi_k(\phi_k) d\bm{\theta}_k^* d\phi_k,
\label{eq:limcn_momprior2}
\end{align}
concluding the proof.

\end{proof}

We now proceed to prove Proposition \ref{prop:parsimony_lm}.

For Part (1) we note that 
$an < l_1(X_{k,n}'X_{k,n}) < l_{k}(X_{k,n}'X_{k,n}) < bn$
gives $||(X_{k,n}'X_{k,n})^{-1}||_2^2 \leq 1/an \to 0$ for fixed $a$,
which in turn guarantees
$\hat{\bm{\theta}}_{k,n} \stackrel{a.s.}{\longrightarrow} \bm{\theta}_k^*$ \cite{lai:1979}.
This implies 
$\hat{\phi}_{k,n}=n^{-1}({\bf y}_n-X_{k,n} \hat{\bm{\theta}}_{k,n})'({\bf y}_n-X_{k,n} \hat{\bm{\theta}}_{k,n})
\stackrel{a.s.}{\longrightarrow} n^{-1} ({\bf y}_n-X_{k,n} \bm{\theta}_k^*)'({\bf y}_n-X_{k,n} \bm{\theta}_k^*)
\stackrel{a.s.}{\longrightarrow} \phi_k^*$,
given that $V(Y-X_{k,n} \bm{\theta}_k^*)=\phi_k^*<\infty$ by assumption.
Hence, $d_k(\hat{\bm{\theta}}_{k,n},\hat{\phi}_{k,n}) \stackrel{a.s.}{\longrightarrow} d_k(\bm{\theta}_k^*,\phi_k^*)$.
Since $m_{k,n} \stackrel{P}{\longrightarrow} \hat{\bm{\theta}}_{k,n}$ as $n \to \infty$
and $d_k(\bm{\theta}_k,\phi_k)$ is assumed continuous,
the Continuous Mapping Principle gives $d_k({\bf m}_{k,n},\phi_k) \stackrel{a.s.}{\longrightarrow} d_k(\bm{\theta}_k^*,\phi_k^*)$.

To show that $g_k({\bf y}_n) \stackrel{P}{\longrightarrow} d_k({\bf m}_{k,n},\phi_k)$,
we note that ${\bf s}_{k,n}=({\bf m}_{k,n}, X_{k,n}'X_{k,n}, \hat{\phi}_{k,n})$
is a one-to-one function with the sufficient statistic 
$(\hat{\bm{\theta}}_{k,n}, X_{k,n}'X_{k,n}, \hat{\phi}_{k,n})$ under $M_k$.
Hence ${\bf s}_{k,n}$ is also sufficient and $g_k({\bf y}_n)$ depends only on ${\bf s}_{k,n}$,
so that we may write $g_k({\bf s}_{k,n})=$
\begin{align}
\int_{}^{}\!\, \int_{}^{}\!\, d_k(\bm{\theta}_k,\phi_k) \pi_k^L(\bm{\theta}_k \mid {\bf m}_{k,n}, X_{k,n}'X_{k,n}, \phi_k)
\pi_k^L(\phi_k \mid {\bf s}_{k,n})
d \bm{\theta}_k d\phi_k,
\label{eq:penalty_as_funsuffstat}
\end{align}
where straightforward algebra shows that
$\pi_k^L(\bm{\theta}_k \mid {\bf m}_{k,n}, X_{k,n}'X_{k,n}, \phi_k)=$
\begin{align}
c_{\theta}(\phi_k, {\bf s}_{k,n})
N(\bm{\theta}_k; \hat{\bm{\theta}}_{k,n},\phi_k(X_{k,n}'X_{k,n})^{-1}) \pi_k^L(\bm{\theta}_k \mid \phi_k) \nonumber \\
\pi_k^L(\phi_k \mid {\bf s}_{k,n})=
\frac{c_{\phi}({\bf s}_{k,n})}{c_{\theta}(\phi_k, {\bf s}_{k,n})}
\phi_k^{-(n-k)/2} e^{-\frac{1}{2\phi_k}({\bf y}_n' {\bf y}_n - \hat{\bm{\theta}}_{k,n}'X_{k,n}'X_{k,n}\hat{\bm{\theta}}_{k,n})},
\label{eq:penalty_as_funsuffstat2}
\end{align}
where $c_{\theta}(\phi_k, {\bf s}_{k,n})$
is the normalization constant for $\bm{\theta}_k$
(which may depend on $\phi_k$)
and $c_{\phi}({\bf s}_{k,n})$ that for the marginal posterior of $\phi_k$.

Lemma \ref{lem:gcont_nlps} gives that
$g_k({\bf s}_{k,n})$ is continuous
in ${\bf s}_{k,n}=({\bf m}_{k,n}, X_{k,n}'X_{k,n}, \hat{\phi}_{k,n})$,
hence by the Continuous Mapping Principle
\begin{align}
g_k({\bf s}_{k,n}) \stackrel{P}{\longrightarrow} \int_{}^{}\!\, \int_{}^{}\!\, &d_k(\bm{\theta}_k,\phi_k)
\pi_k^L(\bm{\theta}_k \mid \bm{\theta}_k^*, X_{k,n}'X_{k,n}, \phi_k)  \nonumber \\
&\pi_k^L(\phi_k \mid \bm{\theta}_k^*, X_{k,n}'X_{k,n}, \phi_k^*)
d\bm{\theta}_k d\phi_k=c_n,
\label{eq:gconv_to_cn}
\end{align}
where $\pi_k^L(\phi_k \mid \bm{\theta}_k^*, X_{k,n}'X_{k,n}, \phi_k^*) \propto
c_{\theta}(\phi_k, \bm{\theta}_k^*, X_{k,n}'X_{k,n})^{-1}
\phi_k^{-(n-k)/2} e^{-\frac{n\phi_k^*}{2\phi_k}}$.
For a fixed sequence of $X_{k,n}$ (\ref{eq:gconv_to_cn}) is just a sequence in $n$.
To complete the proof we just need to show that
$\mathop {\lim }\limits_{n \to \infty } c_n \to d_k(\bm{\theta}_k^*,\phi_k^*)$
for any sequence $X_{k,n}$ satisfying the theorem assumptions,
which combined with
$d_k({\bf m}_{k,n},\phi_k^*) \stackrel{P}{\longrightarrow} d_k(\bm{\theta}_k^*,\phi_k^*)$
would give that $g_k({\bf s}_{k,n}) \stackrel{P}{\longrightarrow} d_k({\bf m}_{k,n},\phi_k^*)$.
By Lemma \ref{lem:cn_convergence},
\begin{align}
\mathop {\lim }\limits_{n \to \infty} c_n=
\int_{}^{}\!\, \int_{}^{}\!\, & d_k(\bm{\theta}_k,\phi_k) \mathop {\lim } \limits_{n \to \infty}
\pi_k^L(\bm{\theta}_k \mid \bm{\theta}_k^*, X_{k,n}'X_{k,n}, \phi_k)  \nonumber \\
&\pi_k^L(\phi_k \mid \bm{\theta}_k^*, X_{k,n}'X_{k,n}, \phi_k^*)
d\bm{\theta}_k d\phi_k^* = \nonumber \\
\int_{}^{}\!\, \int_{}^{}\!\, & d_k(\bm{\theta}_k,\phi_k) \mathop {\lim } \limits_{n \to \infty}
h_n(\bm{\theta}_k,\phi_k) d\bm{\theta}_k d\phi_k^*.
\label{eq:lim_cn}
\end{align}
Now, from (\ref{eq:penalty_as_funsuffstat})-(\ref{eq:gconv_to_cn})
we obtain
$h_n(\bm{\theta}_k,\phi_k) \propto$
\begin{align}
N(\bm{\theta}_k; \bm{\theta}_k^*, \phi_k(X_{k,n}'X_{k,n})^{-1})
\mbox{IG}(\phi_k; n/2, n\phi_k^*/2)
\pi_k^L(\bm{\theta}_k \mid \phi_k) \pi_k^L(\phi_k),
\label{eq:hn}
\end{align}
where $\mbox{IG}$ denotes the inverse gamma density function.

Informally,
given the assumptions on $\pi_k^L(\bm{\theta}_k \mid \phi_k)$,
for (\ref{eq:hn}) to converge to a point mass at $(\bm{\theta}_k^*,\phi_k^*)$
we need the trace of $(X_{k,n}'X_{k,n})^{-1}$ to converge to 0.
Note that $\mbox{tr}((X_{k,n}'X_{k,n})^{-1}) \leq k/l_1$,
which is satisfied as long as $l_1$ grows faster with $n$ than $k$ does,
and that under our assumptions $k/l_1< k/(an) \rightarrow 0$.
Formally, the conditions on the eigenvalues of $X_{k,n}'X_{k,n}$ imply that for $n>n_0$,
\begin{align}
h_n(\bm{\theta}_k) \leq \mbox{IG}(\phi_k; n/2, n\phi_k^*/2) \pi^L(\phi_k) \times \nonumber \\
\times \left( \frac{b}{a} \right)^{|k|/2} \frac{ (na)^{|k|/2}}{(2\pi)^{|k|/2} \phi_k^{|k|/2}}
\mbox{exp}\left\{ -\frac{na}{2\phi_k} (\bm{\theta}_k - \bm{\theta}_k^*)'(\bm{\theta}_k-\bm{\theta}_k^*) \right\}
\pi_k^L(\bm{\theta}_k \mid \phi_k) 
\label{eq:hn_bound}
\end{align}
We first study the second line in (\ref{eq:hn_bound}).
Given that $|k|=o(n)$, for bounded $\pi^L$ we have
 $\pi^L(\bm{\theta}_k \mid \phi_k) < \infty$ for all $\bm{\theta}_k$
the second line in (\ref{eq:hn_bound}) converges to 0 as $n \to \infty$ for 
any given $\phi_k$ and all $\bm{\theta}_k \neq \bm{\theta}_k^*$,
{\it i.e.} $\pi^L(\bm{\theta}_k \mid \bm{\theta}_k^*, X_{k,n}'X_{k,n}, \phi_k)$
converges to a point mass at $\bm{\theta}_k^*$.

Now suppose that $\pi^L(\bm{\theta}_k \mid \phi_k)$ is unbounded in a 0 Lebesgue measure
set $\tilde{\Theta}_k$.
In this case it also holds that
\begin{align}
\mathop {\lim }\limits_{\bm{\theta}_k \to \tilde{\bm{\theta}}_k}
\mbox{exp}\left\{ -\frac{na}{2\phi_k} (\bm{\theta}_k - \bm{\theta}_k^*)'(\bm{\theta}_k-\bm{\theta}_k^*) \right\}
\pi_k^L(\bm{\theta}_k \mid \phi_k) = 0
\end{align}
for any $\tilde{\bm{\theta}}_k \in \tilde{\Theta}_k$.
This can be seen by contradiction, {\it i.e.} assume that 
for $||\bm{\theta}_k - \tilde{\bm{\theta}}_k||^2<\epsilon$
and an arbitrary small $\epsilon$ there exists some $\delta>0$ such that
$\mbox{exp}\left\{ -\frac{na}{2\phi_k} (\bm{\theta}_k - \bm{\theta}_k^*)'(\bm{\theta}_k-\bm{\theta}_k^*) \right\}
\pi_k^L(\bm{\theta}_k \mid \phi_k) > \delta$
for some arbitrarily large values of $n$.
Then the prior probability of $||\bm{\theta}_k - \tilde{\bm{\theta}}_k||^2<\epsilon$
\begin{align}
\int_{||\bm{\theta}_k - \tilde{\bm{\theta}}_k||^2<\epsilon}^{}\!\, \pi_k^L(\bm{\theta}_k \mid \phi_k) d\bm{\theta}_k>
\delta \int_{||\bm{\theta}_k - \tilde{\bm{\theta}}_k||^2<\epsilon}^{}\!\, 
\mbox{exp}\left\{ \frac{na}{2\phi_k} (\bm{\theta}_k - \bm{\theta}_k^*)'(\bm{\theta}_k-\bm{\theta}_k^*) \right\},
\label{eq:prob_neigh_unbounded}
\end{align}
but the integrand is positive and increasing with $n$
and hence by the Monotone Convergence Theorem (\ref{eq:prob_neigh_unbounded}) converges to $\infty$
as $n \to \infty$,
which would imply that $\pi_k^L(\bm{\theta}_k \mid \phi_k)$ is improper.

Finally, we note that given that $\pi_k^L(\phi_k)$ is bounded and continuous
the first line in (\ref{eq:hn_bound}) converges to 0 as $n \to \infty$
for any $\phi_k \neq \phi_k^*$,
hence (\ref{eq:lim_cn}) converges to $d(\bm{\theta}_k^*,\phi_k^*)$,
which completes the proof.

\subsection{Proof of Proposition \ref{prop:postmode}, Part (i)}

For ease of notation we drop the subindex $k$ indicating the model and we denote $\mbox{dim}(\bm{\theta})=|k|$.
Consider first the pMOM prior and take $\tau=1$ without loss of generality. The log-posterior density is
\begin{align}
L_n(\bm{\theta},\phi) + \sum_{i=1}^{|k|} \mbox{log}(\theta_i^2) - p\mbox{log}(\phi) - \frac{1}{2\phi} \sum_{i=1}^{|k|} \theta_i^2 + \mbox{log}\pi(\phi),
\label{eq:pmom_logpos}
\end{align}
where $L_n(\bm{\theta},\phi)$ is the log-likelihood and $\pi(\phi)$ is the prior density on $\phi$.
Suppose that the sampling model satisfies the conditions in \citeasnoun{walker:1969},
then $L_n(\bm{\theta},\phi)$ can be approximated by a second order Taylor expansion around an MLE of $L_n(\bm{\theta},\phi)$.
Performing this expansion and setting the partial derivative with respect to $\theta_i$ to 0 delivers
\begin{align}
\sum_{j=1}^{|k|} h_{ij} (\tilde{\theta}_j - \hat{\theta}_j) + \frac{2}{\tilde{\theta}_i} - \frac{\tilde{\theta}_i}{\phi}=0,
\label{eq:pmom_partialderiv}
\end{align}
where $h_{ij}$ is the $(i,j)$ element of the Hessian of $L_n(\bm{\theta},\phi)$ evaluated at $(\bm{\theta},\phi)=(\hat{\bm{\theta}},\hat{\phi})$.
Rearranging terms we obtain
\begin{align}
\tilde{\theta}_i \left( \frac{\tilde{\theta}_i}{n\phi} - \frac{h_{ii}}{n} (\tilde{\theta}_i - \hat{\theta}_i) \right)
- \tilde{\theta}_i \sum_{j \neq i}^{} \frac{h_{ij}}{n} (\tilde{\theta}_j - \hat{\theta}_j) - \frac{2}{n} =0
\label{eq:pmom_partialderiv2}
\end{align}
We note that the Taylor approximation to (\ref{eq:pmom_logpos}) is a quadratic form in $\bm{\theta}$,
which is convex in $\bm{\theta}$, plus $\sum_{i=1}^{|k|}\mbox{log}(\theta_i^2)$ which is convex in each quadrant of $\mathbb{R}^p$
({\it i.e.} for fixed sign of $\theta_1,\ldots,\theta_{|k|}$) and converges to $-\infty$ as any $\theta_i \longrightarrow 0$.
Therefore the function to maximize has a global maxima at the quadrant where $\hat{\bm{\theta}}$ occurs and a local maxima in each other quadrant.
Consider first the two modes for $\theta_i$ occurring when $\mbox{sign}(\tilde{\theta}_j)=\mbox{sign}(\hat{\theta}_j)$ for $j \neq i$.
Under Walker's conditions $\tilde{\theta}_j - \hat{\theta}_j \stackrel{P}{\longrightarrow} 0$,
$h_{ij}/n \stackrel{P}{\longrightarrow} J_{ij}$ with finite $J_{ij}$ (condition B4),
and $\tilde{\phi} \stackrel{P}{\longrightarrow} \phi^*$,
where $(\bm{\theta}^*,\phi^*)$ minimizes KL divergence to the data-generating model
and we assume that $\phi^*>0$.
Incorporating these facts into (\ref{eq:pmom_partialderiv2}) gives that any posterior mode must satisfy
\begin{align}
n\tilde{\theta}_i(\tilde{\theta}_i-\hat{\theta}_i) \stackrel{P}{\longrightarrow} c
\label{eq:pmom_partial_asymp2}
\end{align}
with $0<c<\infty$.
We note that (\ref{eq:pmom_partial_asymp2}) remains valid for linear models with bounded eigenvalues as stated in the proposition assumptions,
given that then $L_n(\bm{\theta},\phi)$ is exactly quadratic and the condition ensures almost sure convergence of the MLE,
which implies $\tilde{\theta}_j - \hat{\theta}_j \stackrel{P}{\longrightarrow} 0$
and $\tilde{\phi} \stackrel{P}{\longrightarrow} \phi^*$.
Now suppose that $\theta_i^* \neq 0$, then for one mode
$\tilde{\theta}_i \stackrel{P}{\longrightarrow} \theta_i^*$ and thus
$n(\tilde{\theta}_i-\hat{\theta}_i) \stackrel{P}{\longrightarrow} c/\theta_i^*$,
whereas solving (\ref{eq:pmom_partial_asymp2} gives that the other mode $\sqrt{n} \tilde{\theta}_i \stackrel{P}{\longrightarrow} c'$.
Next assume that $\theta_i^* =0$, then $\hat{\theta}_i \stackrel{P}{\longrightarrow} 0$
and hence $n \tilde{\theta}_i^2 \stackrel{P}{\longrightarrow} c$.
This implies that $\sqrt{n} \tilde{\theta}_i \stackrel{P}{\longrightarrow} \sqrt{c}$,
which from (\ref{eq:pmom_partial_asymp2}) implies that
$\sqrt{n} (\tilde{\theta}_i - \hat{\theta}_i) \stackrel{P}{\longrightarrow} c'$ with $0<c'<\infty$.
To summarize, the modes $\tilde{\theta}_i$ when $\mbox{sign}(\tilde{\theta}_j)=\mbox{sign}(\hat{\theta}_j)$ for $j \neq i$
are either $O_p(n^{-1})$ from the MLE or $O_p(n^{-1/2})$ from 0.
All other modes are given by the intersection of the contours of an ellipse centered at $\hat{\bm{\theta}}$ and all axis lengths
shrinking at rate $O(n^{-1})$ (from boundedness of eigenvalues)
with the isocontours $\sum_{i=1}^{|k|} \mbox{log}(\theta_i)=c$ for some $0<c<\infty$ (which do not depend on $n$).
Hence all modes $\tilde{\bm{\theta}}$ occurring at quadrants other than that of $\hat{\bm{\theta}}$
shrink towards 0 at the same rate, {\it i.e.} $\tilde{\theta}_i=O_p(n^{-1/2})$.

The proof for the piMOM and peMOM priors follows in an analogous fashion.
Performing a second order Taylor approximation to the piMOM posterior around an MLE $\hat{\bm{\theta}}$
and setting the partial derivative with respect to $\theta_i$ to 0 delivers
that $\tilde{\theta}_i$ and $\tilde{\phi}_i$ must satisfy
\begin{align}
\sum_{j=1}^{|k|} h_{ij} (\tilde{\theta}_i-\hat{\theta}_i) + h_{i,|k|+1} (\tilde{\phi}-\hat{\phi}) - \frac{2}{\tilde{\theta}_i} + \frac{2\tilde{\phi}}{\tilde{\theta}_i^3}=0,
\label{eq:pimom_partial}
\end{align}
where as before $h_{ij}$ indicates the Hessian of $L_n(\bm{\theta},\phi)$ evaluated at $(\hat{\bm{\theta}},\hat{\phi})$.
Rearranging terms delivers
\begin{align}
\frac{h_{ii}}{n} \tilde{\theta}_i^3(\tilde{\theta}_i-\hat{\theta}_i) +
\tilde{\theta}_i^3 \left( \sum_{j \neq i}^{} \frac{h_{ij}}{n} (\tilde{\theta}_i-\hat{\theta}_i) + \frac{h_{i,|k|+1}}{n} (\tilde{\phi}-\hat{\phi}) \right)
- \frac{2}{n} \tilde{\theta}_i^2 + \frac{2\phi}{n}=0.
\label{eq:pimom_partial2}
\end{align}
We again consider the modes for $\tilde{\theta}_i$ when $\mbox{sign}(\tilde{\theta}_j)=\mbox{sign}(\hat{\theta}_j)$ for $j \neq i$.
Either Walker's conditions for general models or the eigenvalue conditions in the linear model case
guarantee that $h_{ij}/n \stackrel{P}{\longrightarrow} J_{ij}$ for all $i,j$,
whereas MLE consistency gives that $(\tilde{\theta}_i-\hat{\theta}_i) \stackrel{P}{\longrightarrow} 0$,
and $\tilde{\phi}-\hat{\phi} \stackrel{P}{\longrightarrow} 0$.
Therefore, $\tilde{\theta}_i$ must satisfy
\begin{align}
n \tilde{\theta}_i^3(\tilde{\theta}_i-\hat{\theta}_i) \stackrel{P}{\longrightarrow} c,
\label{eq:pimom_partial_asymp}
\end{align}
where $0<c<\infty$. Consider the case where the true parameter value $\theta_i^* \neq 0$,
then for one mode $\tilde{\theta}_i^3 \stackrel{P}{\longrightarrow} (\theta_i^*)^3 \neq 0$
and hence $n(\tilde{\theta}_i-\hat{\theta}_i) \stackrel{P}{\longrightarrow} c'$ with $0<c'<\infty$,
whereas from $\ref{eq:pimom_partial_asymp}$ the other mode $n\tilde{\theta}_i^4 \stackrel{P}{\longrightarrow} c'$.
Now consider the case when $\theta_i^*=0$, then $\hat{\theta}_i \stackrel{P}{\longrightarrow} 0$
and hence $n\tilde{\theta}_i^4 \stackrel{P}{\longrightarrow} c''$ with $0<c''<\infty$.
Similarly to the pMOM proof, all axes corresponding to the quadratic expansion contract exactly at rate $n^{-1}$,
hence all other modes $\tilde{\theta}_i=O_p(n^{-1/4})$.
The proof for the peMOM case proceeds identically, with the only difference that term
$-2\tilde{\theta}_i^2/n$ in (\ref{eq:pimom_partial2}) changes for
$-\tilde{\theta}_i^4/(n\phi) \stackrel{P}{\longrightarrow} 0$, hence one obtains the same convergence in probability for $\tilde{\theta}_i$.

\subsection{Proof of Proposition \ref{prop:postmode}, Part (ii)}

We first state a lemma regarding the derivatives of the univariate log-MOM, eMOM and iMOM prior densities with prior dispersion $\tau=1$.
The do not prove the lemma, as it follows from straightforward algebra.
\begin{lemma}
Let $l(\theta_i,\phi)=\mbox{log}\left( \pi(\theta_i \mid \phi) \right)$.
\begin{enumerate}[(i)]
\item Let $\pi(\theta_i \mid \phi) \propto \phi^{-3/2} \theta_i^2 \mbox{exp}\{-\frac{1}{2}\theta_i^2/\phi\}$ be the MOM density, then
\begin{align}
\frac{\partial^2 l}{\partial \theta_i^2}= -\frac{2}{\theta_i^2} - \frac{1}{\phi};
\frac{\partial^2 l}{\partial \theta_i \partial \phi}= \frac{\theta_i}{\phi^2};
\frac{\partial^2 l}{\partial \phi^2}= \frac{3}{2\phi^2} - \frac{\theta_i^2}{\phi^3};
\frac{\partial^3 l}{\partial \theta_i^3}= \frac{4}{\theta_i^3}.
\nonumber
\end{align}
\item Let $\pi(\theta_i \mid \phi) \propto \mbox{exp}\{ -\phi/\theta_i^2 \} \phi^{-1/2} \mbox{exp}\{-\frac{1}{2}\theta_i^2/\phi\}$ be the eMOM density, then
\begin{align}
\frac{\partial^2 l}{\partial \theta_i^2}= -\frac{6\phi}{\theta_i^4} - \frac{1}{\phi};
\frac{\partial^2 l}{\partial \theta_i \partial \phi}= \frac{2}{\theta_i^3} + \frac{\theta_i}{\phi^2};
\frac{\partial^2 l}{\partial \phi^2}= \frac{1}{2\phi^2} - \frac{\theta_i^2}{\phi^3};
\frac{\partial^3 l}{\partial \theta_i^3}= \frac{24\phi}{\theta_i^5}.
\nonumber
\end{align}
\item Let $\pi(\theta_i \mid \phi) \propto \phi^{1/2} \theta_i^{-2} \mbox{exp}\{-\phi/ \theta_i^2 \}$ be the eMOM density, then
\begin{align}
\frac{\partial^2 l}{\partial \theta_i^2}= \frac{2}{\theta_i^2} - \frac{6\phi}{\theta_i^4};
\frac{\partial^2 l}{\partial \theta_i \partial \phi}= \frac{2}{\theta_i^3};
\frac{\partial^2 l}{\partial \phi^2}= -\frac{1}{2\phi^2};
\frac{\partial^3 l}{\partial \theta_i^3}= -\frac{4}{\theta_i^3} + \frac{24\phi}{\theta_i^5}.
\nonumber
\end{align}
\end{enumerate}
\label{lem:deriv_nlps}
\end{lemma}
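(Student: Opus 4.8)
The plan is to reduce every claim to a direct term-by-term differentiation of a log-density. Because the prior dispersion is fixed at $\tau=1$, in each of the three cases the normalizing constant of $\pi(\theta_i\mid\phi)$ is a pure number (for the MOM, eMOM and iMOM kernels it equals $1/\sqrt{2\pi}$, $e^{\sqrt{2}}/\sqrt{2\pi}$ and $1/\sqrt{\pi}$ respectively), free of both $\theta_i$ and $\phi$. Such a constant contributes nothing to any partial derivative, so it may be discarded at the outset and it suffices to take the logarithm of the displayed kernel and differentiate. Note also that the third listed density, $\phi^{1/2}\theta_i^{-2}\exp\{-\phi/\theta_i^2\}$, is in fact the iMOM kernel, so the three cases cover exactly the MOM, eMOM and iMOM priors.

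First I would record the three log-kernels, up to an additive constant,
\begin{align}
l(\theta_i,\phi) &= 2\log|\theta_i| - \frac{3}{2}\log\phi - \frac{\theta_i^2}{2\phi}, \quad \text{(MOM)} \\
l(\theta_i,\phi) &= -\frac{\phi}{\theta_i^2} - \frac{1}{2}\log\phi - \frac{\theta_i^2}{2\phi}, \quad \text{(eMOM)} \\
l(\theta_i,\phi) &= -2\log|\theta_i| + \frac{1}{2}\log\phi - \frac{\phi}{\theta_i^2}. \quad \text{(iMOM)}
\end{align}
From these I would compute the first partials $\partial l/\partial\theta_i$ and $\partial l/\partial\phi$, differentiating term by term: the $\log|\theta_i|$ terms give $1/\theta_i$, the $\log\phi$ terms give $1/\phi$, and the power terms $\theta_i^2/\phi$ and $\phi/\theta_i^2$ are handled by the usual rules. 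Differentiating $\partial l/\partial\theta_i$ once more in $\theta_i$ yields $\partial^2 l/\partial\theta_i^2$ and then $\partial^3 l/\partial\theta_i^3$, while differentiating $\partial l/\partial\theta_i$ in $\phi$ gives the mixed partial $\partial^2 l/\partial\theta_i\partial\phi$, and differentiating $\partial l/\partial\phi$ in $\phi$ gives $\partial^2 l/\partial\phi^2$. Matching the resulting monomials against the tabulated entries completes the verification in each of the three cases.

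There is no genuine obstacle here; the content is bookkeeping. The only two points worth a word of care are that each $l$ is $C^{\infty}$ on the region $\{\theta_i\neq 0,\ \phi>0\}$, so the mixed second partials commute by Clairaut's theorem and the order of differentiation in $\partial^2 l/\partial\theta_i\partial\phi$ is immaterial; and that the term $2\log|\theta_i|$ (and likewise $-2\log|\theta_i|$) differentiates to $2/\theta_i$ (resp.\ $-2/\theta_i$) irrespective of the sign of $\theta_i$, since $\tfrac{d}{d\theta}\log|\theta|=1/\theta$. With these observations the three displayed tables follow immediately from the expressions above.
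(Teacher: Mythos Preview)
Your proposal is correct and matches the paper's approach: the paper does not give a proof at all, stating only that the lemma ``follows from straightforward algebra,'' which is precisely the term-by-term differentiation you outline. Your observation that the normalizing constants are free of $(\theta_i,\phi)$ when $\tau=1$, and your identification of the third density as the iMOM kernel (the paper's label ``eMOM'' there is a typo), are both on point.
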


\subsection{Proof of Proposition \ref{prop:postmode}, Part (ii)}

Consider Proposition \ref{prop:postmode}(ii) for general models that satisfy the conditions in \citeasnoun{walker:1969}.
For ease of notation we drop the subindex $k$ and conditioning on model $M_k$. The posterior expectation of interest is
$E(\theta_i \mid {\bf y}_n)=$
\begin{align}
\frac{\int_{}^{}\!\, \int_{}^{}\!\, \theta_i \mbox{exp}\left\{\mbox{log} (\pi(\bm{\theta} \mid \phi)) + L_n(\bm{\theta},\phi) + \mbox{log}(\pi(\phi))\right\}  d \bm{\theta} d\phi}
{\int_{}^{}\!\, \int_{}^{}\!\, \mbox{exp}\left\{\mbox{log} (\pi(\bm{\theta} \mid \phi)) + L_n(\bm{\theta},\phi) + \mbox{log}(\pi(\phi))\right\}  d \bm{\theta} d\phi}= \nonumber \\
\frac{\int_{}^{}\!\, \int_{}^{}\!\, \theta_i e^{-n h_n(\bm{\theta},\phi)} d \bm{\theta} d\phi}{\int_{}^{}\!\, \int_{}^{}\!\, e^{-n h_n(\bm{\theta},\phi)} d \bm{\theta} d\phi},
\label{eq:postmean}
\end{align}
where $L_n(\bm{\theta},\phi)$ is the log-likelihood function.
We shall use Theorem 4 in \citeasnoun{kass:1990} to obtain a Laplace approximation to (\ref{eq:postmean}) by expanding
$h_n(\bm{\theta},\phi)$ around its main posterior mode $(\tilde{\bm{\theta}},\tilde{\phi})$.
We note that when the true parameter value $\theta_i^* \neq 0$ the posterior multi-modality does not vanish even as $n \rightarrow \infty$,
but defer discussion of this point to later in the proof.
We note that Walker's conditions ensure that the model is Laplace regular and hence Theorem 4 in \citeasnoun{kass:1990} can be used.
To use the theorem we set $g(\bm{\theta},\phi)=\theta_i$, $b(\bm{\theta},\phi)=1$ and $\gamma(\bm{\theta},\phi)=\pi(\bm{\theta} \mid \phi) \pi(\phi)$
and note that $g(\bm{\theta},\phi)$ are four times $\gamma(\bm{\theta},\phi)$ and six times differentiable.
We also note that when $\pi(\bm{\theta} \mid \phi)$ is either the eMOM or iMOM prior density, it is infinitely differentiable but not analytical at $\theta_i=0$,
but $\tilde{\theta}_i$ cannot occur at 0 (the prior density is 0) and hence we may ignore this set with 0 Lebesgue measure.
Direct application of Theorem 4 in \citeasnoun{kass:1990} gives

\begin{align}
E(\theta_i \mid {\bf y}_n)= \tilde{\theta}_i + \frac{1}{n} \sum_{j=1}^{|k|+1} h_{ij} \left( -\frac{1}{2} \sum_{r,s}^{} h^{rs} h_{rsj} \right) + O \left( n^{-2} \right)
\label{eq:postmean_approx}
\end{align}
where $|k|=\mbox{dim}(\bm{\theta})$, $h_{ij}$ denotes the $(i,j)$ element of the Hessian of $h_n(\bm{\theta},\phi)$ evaluated at $(\tilde{\bm{\theta}},\tilde{\phi})$,
$h^{ij}$ that of the inverse Hessian and $h_{rsj}$ are third derivatives.
That is,
\begin{align}
h_{ii}= \frac{1}{n} \frac{\partial^2}{\partial \theta_i^2} L_n(\bm{\theta},\phi) + \frac{1}{n} \frac{\partial^2}{\partial \theta_i^2}  \mbox{log}( \pi(\theta_i \mid \phi) ), \nonumber \\
h_{ij}= \frac{1}{n} \frac{\partial^2}{\partial \theta_i \partial \theta_j} L_n(\bm{\theta},\phi), \nonumber \\
h_{i,|k|+1}= \frac{1}{n} 
\frac{\partial^2}{\partial \theta_i \partial \phi} L_n(\bm{\theta},\phi) + \frac{1}{n} \frac{\partial^2}{\partial \theta_i \partial \phi}
\mbox{log}( \pi(\theta_i \mid \phi) ).
\label{eq:hessian_elem} 
\end{align}
From the Normal approximation to the likelihood we obtain that $h_{rsj} \stackrel{P}{\longrightarrow} 0$ unless $r=s=j$,
in which case $h_{jjj} \stackrel{P}{\longrightarrow} \frac{\partial^3}{\partial \theta_j^3} \mbox{log}(\pi(\tilde{\theta}_j\mid \phi))$.
Hence,
\begin{align}
E(\theta_i \mid {\bf y}_n) \stackrel{P}{\longrightarrow} \tilde{\theta}_i - \frac{1}{2n} \left(\sum_{j=1}^{|k|+1} h_{ij} h^{jj} h_{jjj} \right)
\label{eq:postmean_approx_asymp}
\end{align}
Walker's conditions ensure that $h_{ij}$ for $i\neq j$ converge in probability to a finite $J_{ij}$.
Regarding $h_{ii}$, the first term converges to $h_{ii}$ whereas the second term is $O_p(n^{-1})$ when $\theta_i^*\neq 0$
and $O_p(1)$ when $\theta_i^*=0$ for either the MOM, eMOM or iMOM prior
(Proposition \ref{prop:postmode}(i) and Lemma \ref{lem:deriv_nlps}), hence $h_{ii}=O_p(1)$.
This in turn implies that the Hessian converges in probability to $J$ plus diagonal terms that either converge to 0 or are $O_p(1)$,
and hence the elements in its inverse $h^{jj}=O_p(1)$.
Finally consider $h_{jjj}$. 
From Lemma \ref{lem:deriv_nlps} when $\theta_j^* \neq 0$ we obtain $h_{jjj}=O_p(1)$ for either the MOM, eMOM or iMOM priors.
When $\theta_j^*=0$ for the MOM prior
$h_{jjj} \stackrel{P}{\longrightarrow} 4 n^{-1} \tilde{\theta}_i^{-3}=n^{-1} O_p(n^{3/2})=O_p(n^{1/2})$
for $j=1,\ldots,|k|$ and $h_{jjj} \stackrel{P}{\longrightarrow} O_p(1)$ for $j=|k|+1$.
For the eMOM and iMOM priors
$h_{jjj} \stackrel{P}{\longrightarrow} 24 n^{-1} \tilde{\phi}\tilde{\theta}_i^{-5}=n^{-1} O_p(n^{5/4})=O_p(n^{1/4})$ for $j=1,\ldots,|k|$
and again $h_{jjj} \stackrel{P}{\longrightarrow} O_p(1)$ for $j=|k|+1$.
Therefore, from (\ref{eq:postmean_approx}) we obtain that
$E(\theta_i \mid {\bf y}_n) \stackrel{P}{\longrightarrow} \tilde{\theta}_i + O_p(n^{-1})$ if $\theta_j^* \neq 0$ for $j=1,\ldots,|k|$
and $E(\theta_i \mid {\bf y}_n) \stackrel{P}{\longrightarrow} \tilde{\theta}_i + O_p(n^{-1/2})$ if $\theta_j^* \neq 0$ for any $j=1,\ldots,|k|$.
In particular, in cases of parameter orthogonality where $h_{ij}=0$ for all $i \neq j$ then the difference between the posterior
mean and posterior mode of $\theta_i$ is $O_p(n^{-1})$ whenever $\theta_i^* \neq 0$.
To conclude the proof, we recall that the posterior is multi-modal and hence 
approximate $E(\theta_i \mid {\bf y}_n)$ by adding (\ref{eq:postmean_approx}) across the $2^{|k|}$ modes.
Proposition \ref{prop:postmode} that for such modes
$\tilde{\theta}_i=O_p(n^{-1/2})$ for pMOM and $\tilde{\theta}_i=O_p(n^{-1/4})$ for peMOM and piMOM, hence
$E(\theta_i \mid {\bf y}_n)= \hat{\theta}_i + O_p(n^{-1/2})= \theta_i^* + O_p(n^{-1/2})$ for MOM
and $E(\theta_i \mid {\bf y}_n)= \hat{\theta}_i + O_p(n^{-1/4})=\hat{\theta}_i^* + O_p(n^{-1/4})$ for eMOM or iMOM.


\subsection{Proof of Proposition \ref{prop:postmode}, Part (iii)}

We consider linear models of growing dimensionality, again dropping the model subindex $k$ for ease of notation.
Although we assume that $X_n'X_n$ is a diagonal matrix, we state part of the argument for general $X_n'X_n$
(subject to the eigenvalue conditions in Proposition \ref{prop:parsimony_lm}) and make explicit where the orthogonality assumption is needed.
As argued during the proof of Proposition \ref{prop:postmode}(i), the rates for posterior modes remain valid for linear models
with such bounded eigenvalues.
Regarding the posterior mean, the assumed conditions on the eigenvalues of $X_n'X_n$ 
guarantee Laplace regularity \cite{kass:1990} and hence the expansion (\ref{eq:postmean_approx}) remains valid,
where now $h_n(\bm{\theta},\phi)=$
\begin{align}
 \frac{1}{2}\mbox{log}(\phi) + \frac{1}{2\phi} (\bm{\theta}-\hat{\bm{\theta}})' \frac{X_n'X_n}{n} (\bm{\theta}-\hat{\bm{\theta}})
-\frac{1}{n} \sum_{i=1}^{|k|} \mbox{log}(\pi(\theta_i\mid \phi)) - \frac{1}{n} \mbox{log}(\pi(\phi))
\label{eq:hn_linearmodel}
\end{align}
Therefore $h_{ij}$ is given by the $(i,j)$ element in $\frac{X'X}{n\tilde{\phi}}$ for $i=1,\ldots,|k|$, $i\neq j$, which is $O_p(1)$.
For $h_{ii}$ we add $\frac{1}{n}\frac{\partial^2}{\partial \theta_i^2} \mbox{log}(\pi(\theta_i \mid \phi)$,
which from Lemma \ref{lem:deriv_nlps} and Proposition \ref{prop:postmode}
is $O_p(n^{-1})$ for $\tilde{\theta}_i \stackrel{P}{\longrightarrow} \theta_i^*$ and $O_p(1)$ for $\tilde{\theta}_i \stackrel{P}{\longrightarrow} 0$
(for pMOM, peMOM and piMOM), hence $h_{ii}=O_p(1)$.
The elements $h_{1,|k|+1},\ldots,h_{|k|,|k|+1}$ are given by the vector
\begin{align}
-\frac{1}{\tilde{\phi}^2} \frac{X_n'X_n}{n} (\tilde{\bm{\theta}}-\hat{\bm{\theta}}) - \frac{1}{n} g(\tilde{\bm{\theta}},\tilde{\phi}),
\label{eq:hn_crossderiv}
\end{align}
where $g(\tilde{\bm{\theta}},\tilde{\phi})$ contains
$\frac{\partial^2}{\partial \theta_i \partial \phi}\mbox{log}(\pi(\theta_i \mid \phi)$ for $i=1,\ldots,|k|$.
Given that the eigenvalues of $X_n'X_n/n$ are bounded
the first term in (\ref{eq:hn_crossderiv}) converges in probability to 0 for the main mode and is $O_p(1)$ for all other modes.
From Lemma \ref{lem:deriv_nlps} and Proposition \ref{prop:postmode}(i) it is straightforward to see that
$n^{-1} g(\tilde{\bm{\theta}},\tilde{\phi}) \stackrel{P}{\longrightarrow} {\bf 0}$, hence $h_{i,|k|+1}=O_p(1)$ for $i=1,\ldots,|k|$.
Similarly, $h_{|k|+1,|k|+1}=$
\begin{align}
-\frac{1}{2\phi^2} + \frac{1}{\phi^3}(\tilde{\bm{\theta}}-\hat{\bm{\theta}})' \frac{X_n'X_n}{n} (\tilde{\bm{\theta}}-\hat{\bm{\theta}}) 
- \frac{1}{n} \sum_{i=1}^{|k|} \frac{\partial^2}{\partial \phi^2} \mbox{log}(\pi(\theta_i,\phi)) \nonumber \\
+ \frac{1}{n} \frac{\partial^2}{\partial \phi^2} \mbox{log}(\pi(\phi)),
\label{eq:hn_doublephideriv}
\end{align}
which from Proposition \ref{prop:postmode}(i) and Lemma \ref{lem:deriv_nlps} is $O_p(1)$.

Regarding the elements in the inverse Hessian $h^{ij}$,
the Hessian is positive definite with $h_{ij}=O_p(1)$ 
and hence $h^{ij}=O_p(1)$ for $i,j=1,\ldots,|k|+1$.

Finally we obtain third derivatives $h_{rsj}$.
Because $h_{rs}$ is given by the corresponding element $X_n'X_n/(n\tilde{\phi})$, $h_{rsj}=0$ for $r,s,j \in \{1,\ldots,|k|\}$.
For $r=s=j$, for the main mode $h_{jjj}=O_p(1)$ under either a pMOM, peMOM or piMOM prior (Lemma \ref{lem:deriv_nlps}),
whereas for other modes $h_{jjj}=O_p(n^{1/2})$ under a pMOM or $O_p(n^{1/4})$ under a peMOM or piMOM priors (Proposition \ref{prop:postmode}(i)).
From (\ref{eq:postmean_approx}), the contribution to $E(\theta_i \mid {\bf y}_n)$ from each mode is
\begin{align}
\tilde{\theta}_i - \frac{1}{2n} \sum_{j=1}^{|k|+1} h_{ij}h^{jj}h_{jjj}
\label{eq:postmean_lm}
\end{align}
plus a lower order term.

Consider now that $X_n'X_n$ is orthogonal. In that case $h_{ij}=0$ for $i \neq j$ and the two values
$\tilde{\theta}_i^{(1)},\tilde{\theta}_i^{(2)}$ maximizing the posterior are independent of $\theta_j$ for $j \neq i$.
Therefore under a pMOM prior
\begin{align}
E(\theta_i \mid {\bf y}) = \tilde{\theta}_i^{(1)} + \tilde{\theta}_i^{(2)} - \frac{1}{2n} O_p(n^{1/2})=\theta_i^* + O_p(n^{-1/2})
\label{eq:postmean_pmom_ortho}
\end{align}
whereas
\begin{align}
E(\theta_i \mid {\bf y}) = \tilde{\theta}_i^{(1)} + \tilde{\theta}_i^{(2)} - \frac{1}{2n} O_p(n^{1/4})=\theta_i^* + O_p(n^{-1/4})
\label{eq:postmean_pemom_ortho}
\end{align}
under either a peMOM or piMOM prior, which concludes the proof.

\subsection{Proof of Proposition \ref{prop:bma_postmean}, Part (i)}

Consider models $M_k$ for $k=1,\ldots,K$, all satisfying the conditions in \citeasnoun{walker:1969}.
Let $M_t$ be the true model and let $k$ be such that $M_t \subset M_k$.
Consider first the pMOM prior.
The marginal likelihood $m_t({\bf y}_n)$ under $M_t$ can be approximated by a Laplace expansion around each posterior mode
$(\tbtheta_t^{(m)},\tphi_t^{(m)})$ for $m=1,\ldots,2^{|t|}$, so that $m_t({\bf y}_n) \approx$
\begin{align}
e^{L_n(\tbtheta_t^{(m)},\tphi_t^{(m)})} \prod_{i}^{} \frac{(\ttheta_{ti}^{(m)})^2}{\tau \tphi^{(m)}}
N(\tbtheta_t^{(m)}; {\bf 0}, \tau \tphi_t^{(m)} I) \pi(\tphi_t^{(m)}) \left| H(\tbtheta_t^{(m)},\tphi_t^{(m)}) \right|^{-1/2},
\label{eq:mlhood_pmom}
\end{align}
where $L_n(\cdot)$ is the log-likelihood and $H(\tbtheta_t^{(m)},\tphi_t^{(m)})$ the Hessian 
of the log-likelihood plus the log-prior density evaluated at $(\tbtheta_t^{(m)},\tphi_t^{(m)})$.
Expressions for the elements in $H(\tbtheta_t^{(m)},\tphi_t^{(m)})$ are given in the proof of Proposition \ref{prop:postmode} for pMOM, peMOM and piMOM priors.

Without loss of generality denote by $(\tbtheta_t^{(1)},\tphi_t^{(1)})$ the mode located in the same quadrant as the MLE $(\hbtheta,\hat{\phi})$.
As seen in Proposition \ref{prop:postmode}, under Walker's conditions
 $(\tbtheta_t^{(1)},\tphi_t^{(1)}) \stackrel{P}{\longrightarrow} (\btheta_t^*,\phi_t^*)$ and 
$n^{-1} H(\tbtheta_t^{(m)},\tphi_t^{(m)}) \stackrel{P}{\longrightarrow} J$ for a positive-definite $J$,
hence (\ref{eq:mlhood_pmom}) converges in probability to
\begin{align}
e^{L_n(\tbtheta_t^{(1)},\tphi_t^{(1)})} c_1 n^{-t/2} c_2,
\label{eq:mlhood_pmom_asymp}
\end{align}
where $c_1,c_2 > 0$.
For modes in any other quadrant
$e^{L_n(\tbtheta_t^{(m)},\tphi_t^{(m)}) - L_n(\tbtheta_t^{(1)},\tphi_t^{(1)})} \stackrel{P}{\longrightarrow} e^{-nc_3}$,
where $c_3>0$ is the Kullback-Leibler divergence between the data-generating model $f_k(\bm{\theta}_t^*,\phi_t^*)$
and that where some elements in $\bm{\theta}_t$ are set to 0 (which is positive by assumption).
Further, in such quadrants
$\prod_{i}^{} \frac{(\ttheta_{ti}^{(m)})^2}{\tau \tphi^{(m)}}=O_p(n^{-|t|})$ so that the sum of (\ref{eq:mlhood_pmom}) across all modes $m=1,\ldots,2^{|t|}$
gives that the marginal likelihood $m_t({\bf y}_n) \approx$
\begin{align}
e^{L_n(\tbtheta_t^{(1)},\tphi_t^{(1)})} &\left( Z_1 Z_2 n^{-t/2} + \sum_{m}^{} e^{L_n(\tbtheta_t^{(m)},\tphi_t^{(m)})-L_n(\tbtheta_t^{(1)},\tphi_t^{(1)})} O_p(n^{-|t|}) Z_3 \right) \nonumber \\
&\stackrel{P}{\longrightarrow} n^{-t/2} e^{L_n(\tbtheta_t^{(1)},\tphi_t^{(1)})} Z_4,
\label{eq:mlhood_pmom_final}
\end{align}
where $Z_j \stackrel{P}{\longrightarrow} c_j>0$ for $j=1,\ldots,4$.

Now consider $M_k$ such that $M_t \subset M_k$. Denote by $\bm{\theta}_{k1}$ the subset of $\bm{\theta}_k$ such that $\theta_{ki}^*=0$
and $\bm{\theta}_{k2}^*$ that for $\theta_{ki}^* \neq 0$, where $\btheta_k^*$ minimizes Kullback-Leibler divergence to the data-generating model,
which under our assumptions is $M_t$ and hence $\mbox{dim}(\btheta_{k1})=|k|-|t|$.
Following the same argument as for $M_t$, it suffices to focus on modes for which $\tbtheta_{k2}$ lies in the same quadrant as $\btheta_{k2}^*$.
Adding up the Laplace approximations across all $2^{|k|-|t|}$ such modes delivers the Bayes factor
$\mbox{BF}_{kt}=\frac{m_k({\bf y}_n)}{m_t({\bf y}_n)} \stackrel{P}{\longrightarrow}$
\begin{align}
\sum_{m}^{} \frac{e^{L_n(\tbtheta_k^{(m)},\tphi_k^{(m)})}}{e^{L_n(\tbtheta_t^{(1)},\tphi_t^{(1)})}}
\frac{\prod_{i}^{} \frac{(\ttheta_{ki}^{(m)})^2}{\tau \tphi_k^{(m)}}}{\prod_{i}^{} \frac{(\ttheta_{ti}^{(1)})^2}{\tau \tphi_t^{(1)}}}
\frac{\pi(\tphi_k^{(m)})}{\pi(\tphi_t^{(1)})} \frac{n^{-|k|/2}}{n^{-|t|/2}}
\frac{\left| n^{-1} H(\tbtheta_k^{(m)},\tphi_k^{(m)})  \right|}{\left| n^{-1} H(\tbtheta_t^{(1)},\tphi_t^{(1)}) \right|},
\label{eq:bf_pmom}
\end{align}
where the first term is $O_p(1)$, the second term converges in probability to $n^{-(|k|-|t|)} Z_5$ for some random variable $Z_5=O_p(1)$,
the third and fourth terms converge in probability to a positive constant ($\pi(\phi)$ is bounded by assumption).
Therefore each summand in (\ref{eq:bf_pmom}) is $O_p(n^{-\frac{3}{2}(|k|-|t|)})$, and given that we are adding up a finite number of terms
$\mbox{BF}_{kt}=O_p(n^{-\frac{3}{2}(|k|-|t|)})$.
Next consider $k$ such that $M_t \not\subset M_k$. By assumption, the minimum Kullback-Leibler divergence $\mbox{KL}(M_t,M_k)$
between $f_t(\bm{\theta}_t^*,\phi_t^*)$
and any $f_k(\bm{\theta}_k^*,\phi_k^*)$ with $(\bm{\theta}_k,\phi_k) \in (\Theta_k,\Phi)$ is strictly positive. Hence by the law of large numbers
$e^{L_n(\tbtheta_k^{(m)},\tphi_k^{(m)}) - L_n(\tbtheta_t^{(1)},\tphi_t^{(1)})} \stackrel{a.s.}{\longrightarrow} e^{-n \mbox{KL}(M_t,M_k)}$
and $\mbox{BF}_{kt}=O_p(e^{-n})$.

The proof for the peMOM and piMOM are largely analogous.
The marginal likelihood for $M_t$ is $m_t({\bf y}_n) \approx$
\begin{align}
e^{L_n(\tbtheta_t^{(1)},\tphi_t^{(1)})} \pi(\tphi_t^{(1)}) \left| H(\tbtheta_t^{(1)},\tphi_t^{(1)}) \right|^{-1/2} Z_1
\prod_{i}^{} e^{-\tau\tphi^{(1)} / (\ttheta_{ti}^{(1)})^2} \nonumber \\
\stackrel{P}{\longrightarrow} n^{-|t|/2} e^{L_n(\tbtheta_t^{(1)},\tphi_t^{(1)})} Z_2
\label{eq:mlhood_pemom}
\end{align}
where $Z_1=O_p(1)$ for the peMOM under any model, whereas for the piMOM $Z_1=O_p(1)$ under $M_t$
and $Z_1=o_p(1)$ under any other $M_k$,
and consequently $Z_2=O_p(1)$.
Consider $k$ such that $M_t \subset M_k$, then from Proposition \ref{prop:postmode}(i)
for all modes with $\tbtheta_{k2}$ in the same quadrant as $\btheta_{k2}^*$ we have
$\prod_{i}^{} \mbox{exp}\{-\sqrt{n} \tau\tphi^{(1)} / (n^{1/4}\ttheta_{ti}^{(1)})^2 \}=\prod_{i}^{} \mbox{exp}\{-\sqrt{n} Z_{3i} \}= e^{-\sqrt{n} Z_4}$,
where $Z_4=O_p(1)$.
Thus the Bayes factor
\begin{align}
\mbox{BF}_{kt} \stackrel{P}{\longrightarrow} 
\sum_{m}^{} \frac{e^{L_n(\tbtheta_k^{(m)},\tphi_k^{(m)})}}{e^{L_n(\tbtheta_t^{(1)},\tphi_t^{(1)})}} \frac{e^{-\sqrt{n}Z_4} n^{-|k|/2}}{n^{-|t|/2} Z_2}=O_p(e^{\sqrt{n}}).
\label{eq:bf_pemom}
\end{align}
The proof for the $M_t \not\subset M_k$ case proceeds in the same manner as for the pMOM.

\subsection{Proof of Proposition \ref{prop:bma_postmean}, Part (ii)}

We start by using the Bayes factor rates proven in Part (i) to derive rates for posterior model probabilities.
Consider a model $M_k$ such that $M_t \subset M_k$ and note that $P(M_k \mid {\bf y}_n) < (1 + \mbox{BF}_{tk} P(M_t)/P(M_k))^{-1}$.
Under a pMOM prior
\begin{align}
P(M_k \mid {\bf y}_n)<
\frac{1}{1+O_p(1) n^{\frac{3}{2}(|k|-|t|)} \frac{P(M_t)}{P(M_k)}}= \nonumber \\
\frac{n^{-\frac{3}{2}(|k|-|t|)} \frac{P(M_t)}{P(M_k)}}{n^{-\frac{3}{2}(|k|-|t|)} \frac{P(M_t)}{P(M_k)}+O_p(1)}=
n^{-\frac{3}{2}(|k|-|t|)} \frac{P(M_k)}{P(M_t)} O_p(1),
\label{eq:posprob_pmom_ineq}
\end{align}
where the last equality follow from the assumption that $P(M_k)/P(M_t)=o(n^{\frac{3}{2}(|k|-|t|)})$ and hence the denominator is $O_p(1)$.
The same argument applies under a peMOM or piMOM prior, where now $\mbox{BF}_{kt}=e^{-\sqrt{n}}$ and hence
$P(M_k \mid {\bf y}_n)< e^{-\sqrt{n}} \frac{P(M_k)}{P(M_t)} O_p(1)$.
Finally, for models $M_k$ such that $M_t \not \subset M_k$, from Proposition \ref{prop:bma_postmean}(i)
$P(M_k \mid {\bf y}_n)< \left(1+e^{nO_p(1)}P(M_t)/P(M_k)\right)^{-1}=e^{-nO_p(1)} P(M_k)/P(M_t)$.

The BMA posterior mean is $E(\theta_i\mid {\bf y}_n)=$
\begin{align}
E(\theta_i \mid M_t,{\bf y}_n) P(M_t \mid {\bf y}_n) + \sum_{k:M_t\subset M_k}^{} E(\theta_i \mid M_k, {\bf y}_n) P(M_k \mid {\bf y}_n) + \nonumber \\
\sum_{k: M_t \not\subset M_k}^{} E(\theta_i \mid M_k, {\bf y}_n) P(M_k \mid {\bf y}_n).
\label{eq:bma_expanded}
\end{align}
Suppose first that $\theta_i^* \neq 0$.
From Proposition \ref{prop:postmode}(ii), $E(\theta_i \mid M_t, {\bf y}_n)= \htheta_i + O_p(n^{-1})$ for pMOM, peMOM and piMOM, where $\htheta_i$ is the MLE.
Also, $E(\theta_i \mid M_k, {\bf y}_n)$ in the second term of (\ref{eq:bma_expanded})
is $O_p(1)$ and $P(M_k \mid {\bf y}_n)$ is either $O_p(n^{-\frac{3}{2}(|k|-|t|)})$ (pMOM) or $O_p(e^{-\sqrt{n}})$ (peMOM, piMOM).
Further, $P(M_k)/P(M_t)=o(n^{|k|-|t|})$ by assumption and hence the whole second term in (\ref{eq:bma_expanded}) is $O_p(n^{-1})$.
Regarding the third term in (\ref{eq:bma_expanded}), $E(\theta_i \mid M_k, {\bf y}_n)=O_p(1)$ and
$P(M_k \mid {\bf y}_n)=O_p(e^{-n})$.
Summarizing, when $\theta_i^* \neq 0$ for the pMOM we have that $E(\theta_i \mid {\bf y}_n)=$
\begin{align}
\left(\htheta_{ti} + O_p(n^{-1}) \right)\left(1+n^{-2} O_p(1)\right)^{-1} + O_p(n^{-2})=\htheta_{ti} + O_p(n^{-1})
\label{eq:mean_mom_nonzero}
\end{align} 
and for the peMOM or piMOM $E(\theta_i \mid {\bf y}_n)=$
\begin{align}
 \left(\htheta_{ti} + O_p(n^{-1}) \right)\left(1+e^{-\sqrt{n}O_p(1)} \right)^{-1} + O_p(e^{-\sqrt{n}}) =\htheta_{ti} + O_p(n^{-1}).
\label{eq:mean_emom_nonzero}
\end{align}

Now consider the case $\theta_i^*=0$. Obviously, $M_t$ only includes non-zero coefficients and hence $E(\theta_i \mid M_t, {\bf y}_n)=0$.
In the second term of (\ref{eq:bma_expanded}), from Proposition \ref{prop:postmode}(ii) we have 
that $E(\theta_i \mid M_k, {\bf y}_n)$ is $O_p(n^{-1/2})$ for pMOM and $O_p(n^{-1/4})$ for peMOM and piMOM.
Thus the whole second term is $O_p(n^{-2}) \pi_{|t|+1}/P(M_t)$ for pMOM
and $O_p(e^{-\sqrt{n}}) \pi_{|t|+1}/P(M_t)$ for peMOM and piMOM, where $\pi_{|t|+1}= \mbox{max}_{k: |k|=|t|+1} P(M_k)$ for $M_t \subset M_k$.
As in the $\theta_i^*\neq 0$ case, the third term is $O_p(e^{-n})$.
Summarizing, when $\theta_i^*=0$ we obtain $E(\theta_i \mid {\bf y}_n)=$
\begin{align}
O_p(n^{-2}) \frac{\pi_{|t|+1}}{P(M_t)}
\label{eq:mean_mom_zero}
\end{align} 
and for the peMOM or piMOM $E(\theta_i \mid {\bf y}_n)=$
\begin{align}
O_p(e^{-\sqrt{n}}) \frac{\pi_{|t|+1}}{P(M_t)},
\label{eq:mean_emom_zero}
\end{align}
as desired.

\subsection{Proof of Proposition \ref{prop:bma_postmean}, Part (iii)}

We adjust the notation of the previous sections slightly to ease the upcoming exposition.
Let $\theta_i$ for $i=1,\ldots,p$ (where $p<n$) be the coefficient corresponding to variable $i$,
and let $\delta_i=\mbox{I}(\theta_i \neq 0)$ be variable inclusion indicators.
We aim to characterize $E(\theta_i \mid {\bf y}_n)= E(\theta_i \mid {\bf y}_n, \delta_i=1) P(\delta_i=1 \mid {\bf y}_n)$.
We first derive $P(\delta_i \mid {\bf y}_n)$. Let $\bm{\delta}=(\delta_1,\ldots,\delta_p)$ and
$\bm{\delta}_{-i}$ be the result from removing $\delta_i$ from $\bm{\delta}$,
and note that 
$P(\delta_i=1 \mid {\bf y}_n)= \sum_{\bm{\delta}_{-i}}^{} P(\delta_i=1 \mid \bm{\delta}_{-i}, {\bf y}_n) P(\bm{\delta}_{-i} \mid {\bf y}_n)$.
Denote by $\pi_{-i}=P(\delta_i=1 \mid \bm{\delta}_{-i})$,
because $X_n'X_n$ is orthogonal the likelihood factors across $i=1,\ldots,p$, and given that the pMOM, peMOM and piMOM priors also factors
straightforward algebra shows that $P(\delta_i=1 \mid \bm{\delta}_{-i}, {\bf y}_n)=$
\begin{align}
\frac{ \int_{}^{}\!\, d_i(\theta_i,\phi) N(\theta_i; m_i, \phi v_i) d \theta_i  m_{-i}({\bf y}_n) \pi_{-i}}
{\int_{}^{}\!\, d_i(\theta_i,\phi) N(\theta_i; m_i, \phi v_i) d \theta_i m_{-i}({\bf y}_n) \pi_{-i} + N(0; m_i, \phi v_i) m_{-i}({\bf y}_n) (1-\pi_{-i})}= \nonumber \\
\frac{ \int_{}^{}\!\, d_i(\theta_i,\phi) N(\theta_i; m_i, \phi v_i) d \theta_i \pi_{-i}}
{\int_{}^{}\!\, d_i(\theta_i,\phi) N(\theta_i; m_i, \phi v_i) d \theta_i \pi_{-i} + N(0; m_i, \phi v_i) (1-\pi_{-i})}
\label{eq:condpp_ortho}
\end{align}
where $m_{-i}({\bf y}_n)=$
\begin{align}
\prod_{j\neq i, \delta_j=1}^{} \int_{}^{}\!\, d_j(\theta_j,\phi) N(\theta_j; m_j, \phi v_j) d\theta_j \prod_{j \neq i, \delta_j=0}^{}m_{j0}({\bf y}_n).
\end{align}
For the pMOM prior $d_i(\theta_i,\phi)=\theta_i^2/\phi\tau$, $v_i=\tau/(n\tau+1)$ and $m_i=v_i \sum_{l=1}^{n} x_{il} y_l$,
for the peMOM prior $d_i(\theta_i,\phi)=e^{-\tau\phi/\theta_i^2}$ and again $v_i=\tau/(n\tau+1)$, $m_i=v_i \sum_{l=1}^{n} x_{il} y_l$,
and for the piMOM prior $d_i(\theta_i,\phi)= \sqrt{\tau\phi}\theta_i^{-2} e^{-\tau\phi/\theta_i^2}$, $v_i=n^{-1}$, $m_i=v_i \sum_{l=1}^{n} x_{il} y_l$.
The assumption that $\delta_1,\ldots,\delta_p$ are exchangeable a priori is equivalent to stating that 
$\pi_{-i}=P(\delta_i=1 \mid \bm{\delta}_{-i}, \omega)= P(\delta_i \mid \omega)=\pi_{\omega}$ for a certain hyper-parameter $\omega$,
and hence
$P(\delta_i=1 \mid {\bf y}_n, \omega)= \sum_{\bm{\delta}_{-i}}^{} P(\delta_i \mid \bm{\delta}_{-i}, {\bf y}_n,\omega) P(\bm{\delta}_{-i} \mid {\bf y}_n,\omega)=$
\begin{align}
\frac{ \int_{}^{}\!\, d_i(\theta_i,\phi) N(\theta_i; m_i, \phi v_i) d \theta_i \pi_{\omega}}
{\int_{}^{}\!\, d_i(\theta_i,\phi) N(\theta_i; m_i, \phi v_i) d \theta_i \pi_{\omega} + N(0; m_i, \phi v_i) (1-\pi_{\omega})}.
\label{eq:margpp_ortho_cond}
\end{align}
Denoting by $\pi(\omega)$ the prior density of $\omega$,
\begin{align}
P(\delta_i=1 \mid {\bf y}_n) \propto \int_{}^{}\!\, d_i(\theta_i,\phi) N(\theta_i; m_i, \phi v_i) d \theta_i \int_{}^{}\!\, \pi_{\omega} \pi(\omega) d\omega \nonumber \\
= \int_{}^{}\!\, d_i(\theta_i,\phi) N(\theta_i; m_i, \phi v_i) d \theta_i P(\delta_i=1)
\label{eq:margpp_ortho}
\end{align}
and hence $P(\delta_i=1 \mid {\bf y}_n)=$
\begin{align}
\frac{\int_{}^{}\!\, d_i(\theta_i,\phi) N(\theta_i; m_i, \phi v_i) d \theta_i P(\delta_i=1)}
{\int_{}^{}\!\, d_i(\theta_i,\phi) N(\theta_i; m_i, \phi v_i) d \theta_i P(\delta_i=1) + N(0; m_i, \phi v_i) P(\delta_i=0)}.
\label{eq:margpp_ortho_final}
\end{align}

Following the same argument as in Proposition \ref{prop:bma_postmean}(ii),
if $\theta_i^* \neq 0$ then $P(\delta_i=1 \mid {\bf y}_n)= \left(1-e^{-nO_p(1)}P(\delta_i=0)/P(\delta_i=1)\right)$
under either a pMOM, peMOM or piMOM prior.
If $\theta_i^* =0$ then
$P(\delta_i=1 \mid {\bf y}_n)=n^{-\frac{3}{2}(|k|-|t|)} P(\delta_i=1)/P(\delta_i=0)$ for pMOM
and $P(\delta_i=1 \mid {\bf y}_n)=e^{-\sqrt{n}O_p(1)} P(\delta_i=1)/P(\delta_i=0)$ for peMOM and piMOM.

We now characterize $E(\theta_i \mid \delta_i=1, {\bf y}_n, \phi)$.
Again, because of orthogonality this posterior mean is the same under any model with $\delta_i=1$, giving
\begin{align}
E(\theta_i \mid \delta_i=1, {\bf y}_n, \phi)=\frac{ \int_{}^{}\!\, \theta_i d_i(\theta_i,\phi) N(\theta_i; m_i, \phi v_i) d \theta_i}
{\int_{}^{}\!\, d_i(\theta_i,\phi) N(\theta_i; m_i, \phi v_i) d \theta_i},
\label{eq:modelmean_ortho}
\end{align}
As before for the pMOM prior $d_i(\theta_i,\phi)=\theta_i^2/(\phi\tau)$ and hence by using Normal moments of up to order 3 (\ref{eq:modelmean_ortho})
becomes $m_i\left(1+ \frac{2\phi v_i}{m_i^2+\phi v_i}\right)$, where $v_i=\tau/(n\tau +1)$ and $m_i=v_i \sum_{i=1}^{n} x_{ji} y_i$.
For the peMOM and piMOM, using a Laplace approximation \cite{kass:1990} around the two modes as in Proposition \ref{prop:postmode}
gives that if $\theta_i^*\neq 0$ then $E(\theta_i \mid \delta_i, {\bf y}_n, \phi)= \htheta_i + O_p(n^{-1})$ (where $\htheta_i$ is the MLE)
and if $\theta_i^*=0$ then $E(\theta_i \mid \delta_i, {\bf y}_n, \phi)= O_p(n^{-1/4})$.

Combining the rates derived above for $P(\delta_i=1 \mid {\bf y}_n, \phi)$ and $E(\theta_i \mid \delta_i=1, {\bf y}_n, \phi)$,
if $\theta_i^* \neq 0$ then
$E(\theta_i \mid {\bf y}_n,\phi)= \htheta_i + O_p(n^{-1})$ for pMOM, peMOM and piMOM,
whereas if $\theta_i^*=0$ then
$E(\theta_i \mid {\bf y}_n,\phi)= O_p(n^{-2}) P(\delta_i=1)/P(\delta_i=0)$ for pMOM
and $E(\theta_i \mid {\bf y}_n,\phi)= e^{-\sqrt{n}O_p(1)} P(\delta_i=1)/P(\delta_i=0)$ for peMOM and piMOM.

\subsection{Proof of Proposition \ref{prop1}}
\label{proof_prop1}

The goal is to show that for all $\epsilon>0$ there exists $\eta>0$ such that
$d(\bm{\theta}) < \eta$ implies $\pi(\bm{\theta}) < \epsilon$.
By construction, the conditional prior density 
is $\pi(\bm{\theta} \mid \lambda)= \pi^L(\bm{\theta}) \mbox{I}(d(\bm{\theta})>\lambda) / h(\lambda)$,
where 
$h(\lambda)= P_u(d(\bm{\theta}) > \lambda)= \int_{}^{}\!\, \pi^L(\bm{\theta}) \mbox{I}(d(\bm{\theta}) > \lambda) d\bm{\theta} $.
Let $\bm{\theta}$ be a value such that $d(\bm{\theta}) < \eta$, and express the
prior density as
\begin{align}
\pi(\bm{\theta}) = \int_{}^{}\!\, \pi(\bm{\theta} \mid \lambda) \pi(\lambda) d\lambda = \nonumber \\
\int_{\lambda \leq \eta}^{}\!\, \frac{\pi^L(\bm{\theta}) \mbox{I}(d(\bm{\theta})>\lambda)}{h(\lambda)} \pi(\lambda) d\lambda + 
\int_{\lambda > \eta}^{}\!\, \frac{\pi^L(\bm{\theta}) \mbox{I}(d(\bm{\theta})>\lambda)}{h(\lambda)} \pi(\lambda) d\lambda
\label{eq:expand_prior}
\end{align}
The second term in (\ref{eq:expand_prior}) is 0, as by assumption $d(\bm{\theta}) < \eta$.
Now, consider that for $\lambda \leq \eta$,
$h(\lambda)= P_u(d(\bm{\theta}) > \lambda)$ is minimized at $\lambda=\eta$, and therefore (\ref{eq:expand_prior})
can be bounded by
\begin{align}
\pi(\bm{\theta}) \leq \frac{\pi^L(\bm{\theta}) \int_{\lambda \leq \eta}^{}\!\, \mbox{I}(d(\bm{\theta})>\lambda) \pi(\lambda) d\lambda}{h(\eta)} 
= \frac{\pi^L(\bm{\theta}) P \left( \lambda < \mbox{min}\{ \eta , d(\bm{\theta}) \} \right)}{h(\eta)}
\label{eq:bound_prior}
\end{align}
Notice that the numerator can be made arbitrarily small by decreasing $\eta$,
since
$\pi^L(\bm{\theta})$ is bounded around $\bm{\theta}_0$,
by assumption there is no prior mass at $\lambda=0$ so that the cdf in the numerator converges to 0 as $\eta \rightarrow 0$,
and that denominator converges to 1 as $\eta \rightarrow 0$.
That is, it is possible to choose $\eta$ such that $\pi(\bm{\theta}) \leq \epsilon$,
which gives the result. 
\qed

\subsection{Proof of Corollary \ref{cor2}}
\label{proof_cor2}

Replace $\mbox{I} (d(\bm{\theta}) > \lambda)$ by $\prod_{i=1}^{p}\mbox{I}(d(\theta_i)>\lambda_i)$
in the proof of Proposition \ref{prop1}.
Letting any 
$\lambda_i$ go to 0 and applying the same argument delivers the result.

\subsection{Proof of Proposition \ref{prop3}}
\label{proof_prop3}

We first note that in order for $\pi(\bm{\theta})$ to be proper
the random variable $d( \bm{\theta} )$
must have finite expectation with respect to $\pi^L(\bm{\theta})$.
Now, the marginal prior for $\bm{\theta}$ is
\begin{align}
\pi(\bm{\theta})= \int_{}^{}\!\, \frac{\pi^L(\bm{\theta}) 
\mbox{I}(d(\bm{\theta} ) > \lambda)}{P_u( d( \bm{\theta} ) > \lambda )} \pi(\lambda) d\lambda=
\pi^L(\bm{\theta}) \int_{0}^{d( \bm{\theta} )} \!\, \frac{\pi(\lambda)}{h(\lambda)} d\lambda. 
\label{eq:marg_theta}
\end{align}
Suppose we set $\pi(\lambda) \propto h(\lambda)$, which we can do as long as $\int_{}^{}\!\, h(\lambda) d\lambda < \infty$.
Then $\pi(\bm{\theta}) \propto \pi^L(\bm{\theta}) d( \bm{\theta} )$, which proves the result.
The only step left is to show that indeed $\int_{}^{}\!\, h(\lambda) d\lambda < \infty$.
In general
\begin{align}
\int_{}^{}\!\, h(\lambda) d\lambda = \int_{}^{}\!\, P_u( d( \bm{\theta} ) > \lambda) d\lambda=
\int_{}^{}\!\, S_{d( \bm{\theta} )}(\lambda) d\lambda,
\label{eq:htau}
\end{align}
where $S_{d( \bm{\theta} )}(\lambda)$ is the survival function of the positive random
variable $d( \bm{\theta} )$
and therefore (\ref{eq:htau}) is equal to its expectation 
$E_u \left( d( \bm{\theta} ) \right)$ with respect to $\pi^L(\bm{\theta})$,
which is finite as discussed at the beginning of the proof.
\qed

\subsection{Proof of Corollary \ref{cor4}}
\label{proof_cor4}

Analogously to the proof of Proposition \ref{prop3}
the marginal prior for $\bm{\theta}$ is $\pi(\bm{\theta})=$
\begin{align}
\int_{}^{}\!\, \ldots \int_{}^{}\!\, \frac{\pi^L(\bm{\theta}) \prod_{i=1}^{p}
  \mbox{I}(d_i(\theta_i)>\lambda_i)}{P_u\left(
    d_1(\theta_1)>\lambda_1,\ldots,d_p(\theta_p)>\lambda_p \right)}
\pi(\bm{\lambda}) d \lambda_1,\ldots,d \lambda_p =  \nonumber \\
\pi^L(\bm{\theta}) \int_{0}^{d_1(\theta_1)}\!\, \ldots
\int_{0}^{d_p(\theta_p)}\!\, \frac{\pi(\bm{\lambda})}{h(\bm{\lambda})} d
\lambda_1,\ldots,d \lambda_p \propto
\pi^L(\bm{\theta}) \prod_{i=1}^{p} d_i(\theta_i),
\label{eq:marg_theta_multtrunc}
\end{align}
as by assumption $\pi(\bm{\lambda}) \propto h(\bm{\lambda})$.
\qed

\subsection{Proof of Proposition \ref{prop5}}
\label{proof_prop5}

By definition, the marginal density $\pi(\btheta^{(m)})=$
\begin{align}
\pi^L(\btheta^{(m)}) \int_{}^{}\!\, \frac{\pi(\lambda)}{h(\lambda)} \prod_{i=1}^{p} \mbox{I}(d( \theta_i ) > \lambda)  d \lambda =
\pi^L(\btheta^{(m)}) \int_{}^{}\!\, \mbox{I}(\lambda < d_{min}(\btheta^{(m)})) \frac{\pi(\lambda)}{h(\lambda)} d \lambda= \nonumber \\
\pi^L(\btheta^{(m)}) P_{\lambda}(d_{min}(\btheta^{(m)})) \int_{}^{}\!\, \frac{1}{h(\lambda)} \pi(\lambda \mid \lambda < d_{min}(\btheta^{(m)})) d \lambda,
\label{eq:proofboundary1}
\end{align}
where $P_{\lambda}(d_{min}(\btheta^{(m)}))= P(\lambda < d_{min}(\btheta^{(m)}))$ is the cdf of $\lambda$
evaluated at $d_{min}(\btheta^{(m)})$.
As $d_{min}(\btheta^{(m)}) \rightarrow 0$ we have that $\pi(\lambda \mid \lambda < d_{min}(\btheta^{(m)}))$ converges to a point mass at zero
and hence the integral in the right hand side of (\ref{eq:proofboundary1}) converges to $1/h(0)=1$.
To finish the proof of (i) notice that $P_{\lambda}(d_{min}(\btheta^{(m)}))= d_{min}(\btheta^{(m)}) \pi(\lambda^{(m)})$
for some $\lambda^{(m)} \in (0,d_{min}(\btheta^{(m)}))$
by the Mean Value Theorem,
as long as $P_{\lambda}(\cdot)$ is differentiable and continuous at $0^{+}$,
{\it i.e.} $\lambda$ is a continuous random variable.
In the particular case $\pi(\lambda)=c h(\lambda)$, note that $h(\cdot)$ is continuous and $h(0)=1$.

To prove (ii) notice that $P_{\lambda}(d_{min}(\btheta^{(m)})) \rightarrow 1$ as $d_{min}(\btheta^{(m)}) \rightarrow \infty$
and that the integral in the right hand side of (\ref{eq:proofboundary1}) is $m(d_{min}(\btheta^{(m)}))=E(1/h(\lambda) \mid \lambda < d_{min}(\btheta^{(m)}))$, 
which is increasing with $d_{min}(\btheta^{(m)})$ as $h(\lambda)$ is monotone decreasing in $\lambda$.
Hence, 
$\mathop {\lim }\limits_{m \to \infty } \pi(\bm{\theta}^{(m)})/\pi^L(\bm{\theta}^{(m)})= \mathop {\lim }\limits_{m \to \infty } m(d_{min}(\bm{\theta}^{(m)}))$
where $m(d_{min}(\btheta^{(m)}))$ increases as $m \rightarrow \infty$.
Furthermore, if $\int_{}^{}\!\, \frac{\pi(\lambda)}{h(\lambda)} < \infty$ the Monotone Converge Theorem applies and 
$m(d_{min}(\btheta^{(m)}))$ converges to a finite constant.
\qed

\subsection{Proof of Corollary \ref{cor6}}
\label{proof_cor6}

Because $\lambda_1,\ldots,\lambda_p$ have independent marginals, 
$\pi(\btheta^{(m)})=\pi^L(\btheta^{(m)}) \prod_{i=1}^{p} P_{\lambda_i}(d_i( \theta_i )) \times$
\begin{align}
\int_{}^{}\!\, \ldots \int_{}^{}\!\, \frac{1}{h(\bm{\lambda})}
\pi \left(\bm{\lambda} \mid
  \lambda_1<d_1(\theta^{(m)}_1),\ldots,\lambda_p<d_p(\theta^{(m)}_p) \right) d\lambda_1
\ldots d\lambda_p= \nonumber \\
\pi^L(\btheta^{(m)}) E \left( h(\bm{\lambda})^{-1} \mid \forall \lambda_i < d_i(
  \theta^{(m)}_i ) \right) \prod_{i=1}^{p} P_{\lambda_i}(d_i( \theta^{(m)}_i )) , 
\label{eq:corboundary1}
\end{align}
where $h(\bm{\lambda})$ is a multivariate survival function
and decreases as $d_i(\theta^{(m)}_i) \rightarrow 0$.
Hence as $d_i( \theta^{(m)}_i ) \rightarrow 0$
$E \left( h(\bm{\lambda})^{-1} \mid \forall \lambda_i < d_i( \theta^{(m)}_i ) \right)$ decreases.
To find the limit as $d_i( \theta^{(m)}_i ) \rightarrow 0$ we note that the integral
is bounded by the finite integral obtained plugging
$d_i( \theta^{(m)}_i)=1$ into the integrand.
Hence, the Dominated Convergence Theorem applies and
$\mathop {\lim }\limits_{m \to \infty } E\left(h(\bm{\lambda})^{-1} \mid \forall \lambda_i < d_i(\theta^{(m)}_i)\right)=E \left( h({\bf 0})  \right)=1$
and from (\ref{eq:corboundary1})
$\mathop {\lim }\limits_{m \to \infty } \pi(\btheta^{(m)}) / \left( \pi^L(\btheta^{(m)}) \prod_{i=1}^{p} P_{\lambda_i}(d_i(\theta^{(m)}_i)) \right)=1$.
Since $\lambda_1,\ldots,\lambda_p$ are continuous the Mean Value Theorem applies,
so that $P_{\lambda_i}\left( d_i( \theta^{(m)}_i ) \right)= d_i( \theta^{(m)}_i ) \pi(\lambda_i^{(m)})$
for some $\lambda_i^{(m)} \in (0, d_i( \theta^{(m)}_i ))$.
To prove (ii),
notice that $P_{\lambda_i}( d_i( \theta^{(m)}_i ) ) \rightarrow 1$ as $d_i( \theta^{(m)}_i ) \rightarrow \infty$
and that $m(\bm{\theta}^{(m)})= E \left( h(\bm{\lambda})^{-1} \mid \forall \lambda_i < d_i( \theta^{(m)}_i ) \right)$
increases as $d_i( \theta^{(m)}_i ) \rightarrow \infty$.
Hence, $\mathop {\lim }\limits_{m \to \infty } \pi(\btheta^{(m)}) / \left(\pi^L(\btheta^{(m)}) m(\btheta^{(m)}) \right)=1$ where
$m(\btheta^{(m)})$  increases with $d_i( \theta^{(m)}_i )$,
which proves (ii).
Further, if $E( h(\bm{\lambda})^{-1}) < \infty$ the Monotone Convergence Theorem applies and 
$\mathop {\lim }\limits_{m \to \infty } m(\btheta^{(m)})=c$ for finite $c>0$.
\qed

\subsection{Multivariate Normal sampling under outer rectangular truncation}
\label{app:alg_tnorm_sample}

The goal is to sample
$\bm{\theta} \sim N(\bm{\mu}, \Sigma) \mbox{I} \left( \bm{\theta} \in T  \right)$
with truncation region
$T=\left\{\bm{\theta} : \theta_i < l_i \mbox{ or } \theta_i > u_i,
  i=1,\ldots,p \right\}$.
We generalize the Gibbs sampling of \citeasnoun{rodriguezyam:2004}
and importance sampling of \citeasnoun{hajivassiliou:1993} and \citeasnoun{keane:1993}
to the non-convex region $T$.

Let $D= \mbox{chol}(\Sigma)$ be the Cholesky decomposition of $\Sigma$
and $K= D^{-1}$ its inverse, so that
$K \Sigma K'= K D D' K'= I$ is the identity matrix,
and define $\bm{\alpha}= K \bm{\mu}$.
The random variable ${\bf Z}= K \bm{\theta}$ follows a 
$N(\bm{\alpha},I) \mbox{I} \left( {\bf Z} \in S \right)$ distribution 
with truncation region $S$.
Since $\bm{\theta}= K^{-1} {\bf Z}= D {\bf Z}$,
denoting ${\bf d}_{i.}$ as the $i^{th}$ row in $D$
we obtain the truncation region
$S= \left\{ {\bf Z}: {\bf d}_{i.} Z \leq l_i \mbox{ or } {\bf d}_{i.} Z \geq
  u_i, i=1,\ldots,p  \right\}$.

The full conditionals for $Z_i$ given
$Z_{(-i)}=(Z_1,\ldots,Z_{i-1},Z_{i+1},\ldots,Z_p)$
needed for Gibbs sampling follow from straightforward algebra.
Denote by $d_{jk}$ the $(j,k)$ element in $D$,
then 
$Z_i \mid Z_{(-i)} \sim N(\alpha_i, 1)$
truncated so that either
$d_{ji} Z_i \leq l_j - \sum_{k \neq i}^{} d_{jk} Z_k$
or $d_{ji} Z_i \geq u_j - \sum_{k \neq i }^{} d_{jk} Z_k$
hold simultaneously
for $j=1,\ldots,p$.
We now adapt the algorithm to address
the fact that this truncation region is non-convex.

The region excluded from sampling
can be written as
$S_i^c= \bigcup_{j=1}^p (a_j, b_j)$,
$a_j= (l_j - \sum_{k \neq i}^{} d_{jk} Z_k)/d_{ji}$
when $d_{ji}>0$ and
$a_j= (u_j - \sum_{k \neq i}^{} d_{jk} Z_k)/d_{ji}$
when $d_{ji}<0$
(analogously for $b_j$).
$S_i^c$ as given is the union of possibly
non-disjoint intervals,
which complicates sampling.
Fortunately, it can be expressed
as a union of disjoint intervals
$S_i = \bigcup_{j=1}^{K} (\tilde{a}_j, \tilde{b}_j)$
with the following algorithm.
Suppose that $l_i$ are sorted increasingly,
set $\tilde{l}_1=l_1$, $\tilde{u}_1=u_1$
and $K=1$.
For $j=2,\ldots,p$ repeat the following two steps.
\begin{enumerate}
\item If $l_j > \tilde{u}_K$ set $K=K+1$, $\tilde{l}_K= l_j$ and
  $\tilde{u}_K=u_j$,
else if $l_j \leq \tilde{u}_K$ and $u_j \geq \tilde{u}_K$ set $\tilde{u}_K=u_j$.
\item Set $j=j+1$.
\end{enumerate}
Finally, because
$(\tilde{l}_1,\tilde{u}_1),\ldots,(\tilde{l}_K,\tilde{u}_K)$
are disjoint and increasing,
we may draw a uniform number $u$ in $(0,1)$ excluding
intervals $(\Phi(\tilde{l}_j), \Phi(\tilde{u}_j))$
and set $Z_i= \Phi^{-1}(u)$,
where $\Phi(\cdot)$
is the inverse Normal$(\alpha_i,1)$ cdf.

\subsection{Monotonicity and inverse of iMOM prior penalty}
\label{app:imompen}

Consider the product iMOM prior as given in (\ref{eq:pimompen}).
We first study the monotonicity of the penalty $d(\theta_i,\lambda)$,
which for simplicity here we denote as $d(\theta)$,
and then provide an algorithm to evaluate its inverse function.
Equivalently, it is convenient to consider the log-penalty $\mbox{log}\left( d(\theta) \right)=$
\begin{align}
 \frac{1}{2} \left( \mbox{log}(\tau \tau_N) + 2\mbox{log}(\phi) + \mbox{log}(2) \right)
- \mbox{log}\left( (\theta-\theta_0)^2 \right) - \frac{\tau \phi}{(\theta - \theta_0)^2}
+ \frac{1}{2 \tau_N \phi} (\theta - \theta_0)^2,
\label{eq:log_imompenalty}
\end{align}
as its inverse uniquely determines the inverse of $d(\theta)$.
Denoting $z= (\theta-\theta_0)^2$, (\ref{eq:log_imompenalty}) can be written as
\begin{align}
g(z)= \frac{1}{2} \left( \mbox{log}(\tau \tau_N) + 2\mbox{log}(\phi) + \mbox{log}(2) \right) - \mbox{log}(z) - \frac{\tau \phi}{z}
+ \frac{1}{2 \tau_N \phi} z.
\label{eq:log_imompenalty2}
\end{align}
To show the monotonicity of (\ref{eq:log_imompenalty2}) we compute its derivative
$g'(z)= -\frac{1}{z} + \frac{\tau \phi}{z^2} + \frac{1}{2 \tau_N \phi}$
and show that it is positive for all $z$.
Clearly, both when $z \rightarrow 0$ 
and $z \rightarrow \infty$ we have positive $g'(z)$.
Hence we just need to see that there is some $\tau_N$ for which all roots of $g'(z)$ are imaginary,
so that $g'(z)>0$ for all $z$.
Simple algebra shows that the roots of $g'(z)$ are
$z= \tau_N \phi \pm \tau_N \phi \sqrt{1 - \frac{2 \tau}{\tau_N}} $,
so that for $\tau_N \leq 2\tau$ there are no real roots.
Hence, for $\tau_N \leq 2\tau$ $g(z)$ is monotone increasing.

We now provide an algorithm to evaluate the inverse.
That is, given a threshold $t$
we seek $z_0$ such that $g(z_0)=t$.
Our strategy is to obtain an initial guess from an approximation to $g(z)$
and then use continuity and monotonicity to bound the desired $z_0$ and conduct a linear interpolation based search.
Inspecting the expression for $g(z)$ in (\ref{eq:log_imompenalty2})
we see that the term $\mbox{log}(z)$ is dominated by $\tau \phi / z$ when $z$ approaches 0
and by $\frac{z}{2\tau_N \phi}$ when $z$ is large.
Hence, we approximate $g(z)$ by dropping the $\mbox{log}(z)$ term, obtaining
\begin{align}
g(z) \approx \frac{1}{2}(\mbox{log}(\tau \tau_N) + 2 \mbox{log}(\phi) + \mbox{log}(2)) - \frac{\tau \phi}{z} + \frac{1}{2 \tau_N \phi} z.
\label{eq:gd_approx}
\end{align}
Setting (\ref{eq:gd_approx}) equal to $t$ and solving for $z$ gives
$z_0=\tau_N \phi \left( -b + \sqrt{b^2 - 2 \frac{\tau}{\tau_N}} \right)$
as an initial guess,
where $b=\mbox{log}(\tau \tau_N) + 2 \mbox{log}(\phi) + \mbox{log}(2)-t$.

If $g(z_0)<t$ we set a lower bound $z_l=z_0$ and an upper bound $z_u$
obtained by increasing $z_0$ by a factor of 2 until $g(z_0)>t$.
Similarly, if $g(z_0)>t$ we set the upper bound $z_u=z_0$
and find a lower bound by successively dividing $z_0$ by a factor of 0.5.
Once $(z_l,z_u)$ are determined, we use a linear interpolation to update $z_0$,
evaluate $g(z_0)$ and update either $z_l$ or $z_u$.
The process continues until $|g(z_0) - t|$ is below some tolerance (we used $10^{-5}$).
In our experience the initial guess is often quite good and the algorithm converges in very few iterations.

\section*{Acknowledgments}
This research was partially funded by the NIH grant R01 CA158113-01.

\bibliographystyle{ECA_jasa}
\bibliography{NLPmixTR} 

\end{document}